\documentclass[11pt]{amsart}
\usepackage{amsmath}
\usepackage{amssymb}
\usepackage{tabularx}
\usepackage{enumerate}
\usepackage{graphicx}
\usepackage{texdraw}
\usepackage{color}

\topmargin=-0.7in \hoffset=-1.8cm \voffset=2cm \textheight=225mm
\textwidth=160mm

\usepackage{mathrsfs}

\usepackage{amsfonts,amssymb,amsmath}
\usepackage{epsfig}


\makeatletter
\@addtoreset{equation}{section}

\makeatother
\newtheorem{thm}{Theorem}[section]
\newtheorem{lem}[thm]{Lemma}

\newtheorem{prop}[thm]{Proposition}

\newcommand{\R}{\mathbb{R}}
\newcommand{\ve}{\varepsilon}
\newcommand{\wt}{\widetilde}

\newcommand{\vi}{\varphi}
\begin{document}
\title[Translating Solutions of a Mean Curvature Flow]
      {Translating Solutions of a Generalized Mean Curvature Flow in a Cylinder: I. Constant Boundary Angles$^*$}
\thanks{$^*$ This research was partly supported by Natural Science Foundation of China (No. 12071299, No. 12001375).}

\author[Bendong Lou and Lixia Yuan]{Bendong Lou$^{\dag}$ and Lixia Yuan$^{\dag, \ddag}$}
\thanks{$\dag$ Mathematics and Science College, Shanghai Normal University, Shanghai 200234, China.}
\thanks{{\bf Emails:}  {\sf lou@shnu.edu.cn} (B. Lou),  {\sf yuan\underline{\ }shnu@hotmail.com} (L. Yuan)}
\thanks{$\ddag$ Corresponding author.}
\date{}

 \subjclass[2010]{35K93, 35B45, 53C44, 58G11}
 \keywords{Mean curvature flow, a priori estimates, translating solution, asymptotic behavior.}

\maketitle

\begin{abstract}
We study a generalized mean curvature flow involving a positive power of the mean curvature and a driving force. In this paper, we first construct {\it all kinds} of radially symmetric translating solutions, and then select one of them to satisfy a prescribed boundary angle in a cylinder. We then
consider the flow starting at an initial hypersurface: showing the a priori estimates (especially the {\it uniform-in-time bounds} for the mean curvature which guarantee the uniform parabolicity of the corresponding fully nonlinear equation), giving the global existence for the solution of the initial boundary value problem, and proving its convergence to the corresponding translating solution. Our study provides a complete exposition on the influence of the dimension, the power of the mean curvature, the driving force and the boundary angles on the existence and stability of radially symmetric translating solutions.
\end{abstract}

%
%
%
%
%
%
%


\section{Introduction}
Consider the following mean curvature flow
\begin{equation}\label{mcf1}
\frac{\partial {\gamma}(t)}{\partial t}= (H^\alpha + b) {\bf n},
\end{equation}
where $\alpha>0$ and $b\in \R$, ${\gamma}(t)$ is a moving smooth hypersurface in $\R^{N+1}$, $H$ denotes its mean curvature, and ${\bf n}$ denotes its unit normal vector.
In the special case where ${\gamma}(t)$ is a graphic hypersurface in a cylinder
$D\times \R$ for $D\subset \R^N$, that is, $ {\gamma}(t)$  is the graph of a function  $x_{N+1}=u(x,t),\ x=(x_1,\cdots, x_N)\in D$, then we have
\begin{equation}
{\bf n}=\frac{(-Du,1)}{\sqrt{1+|Du|^2}},\quad H=\mathrm{div}\, {\bf n},
\end{equation}
with $Du:=(D_1 u, D_2u, \cdots, D_Nu)$ and $D_i u=\frac{\partial u}{\partial x_i}\ (i=1,2,\cdots,N)$. Here we assume ${\bf n}$ is the upward unit normal vector. In this case, the equation (\ref{mcf1}) is converted into
\begin{equation}\label{eq0}
\frac{u_t}{\sqrt{1+|Du|^2}}  =  (\mathrm{div}\, {\bf n})^\alpha+b = \left[\left(\delta_{ij}-\frac{D_iuD_ju}{1+|Du|^2}\right)\frac{D_{ij} u}{\sqrt{1+|Du|^2}}  \right]^\alpha + b,\quad x\in D,\
 t>0.
\end{equation}
This is a fully nonlinear parabolic equation when $\alpha\not= 1$ and $H^{\alpha-1}>0$, and a quasilinear one when $\alpha =1$. For convenience, we call the former as a {\it generalized mean curvature flow} (GMCF, for short), and the latter as a {\it mean curvature flow} (MCF, for short).
When $D$ is a bounded domain in $\R^N$, this equation is often equipped with prescribed boundary angles as boundary conditions:
\begin{equation}\label{bdry0}
{\bf n}\cdot (\nu,0) = \frac{-Du\cdot \nu}{\sqrt{1+|Du|^2}} =g(x,t,u(x,t)),\quad x\in \partial D,\ t>0,
\end{equation}
where $\nu$ denotes the inner unit normal vector on the boundary $\partial D$ of $D$, $g$ is a function satisfying $|g|<1$, and $\theta:= \arccos (g)$ denotes the contact angle between $\gamma(t)$ and $\partial D\times \R$, which is called a prescribed boundary angle.

If $g$ is independent of $t$ and $u$, the problem \eqref{eq0}-\eqref{bdry0} may have a special solution of the form
\begin{equation}\label{ts1}
u(x,t)=\varphi(x)+ct,
\end{equation}
which is called a {\it translation solution} (TS, for short) or a {\it traveling wave solution} (TW, for short), $\varphi(x),\ c\in \R$ are called the {\it profile} and the {\it speed} of the TS, respectively. This kind of solutions are of special importance since they are generally asymptotically stable, and so attract the solutions of the initial or initial-boundary value problems. We now recall some known results on the TSs of MCFs (only for the case $\alpha=1$).

\medskip
\noindent
1. {\it The cases without driving forces: $b=0$.}
In 1993, Altschuler and Wu \cite{AW1} studied the problem with $N=\alpha=1,\ b=0$, that is, the problem
\begin{equation}\label{GR-p}
 \left\{
 \begin{array}{ll}
 \displaystyle u_t = \frac{u_{xx}}{1+u_x^2}, & x\in (-1,1),\ t>0,\\
 u_x(\pm 1, t) = \pm g_0, & t>0,
 \end{array}
 \right.
\end{equation}
for some positive constant $g_0$. They proved that any global solution of \eqref{GR-p} with some initial data converges to the unique (up to a shift) TS $\varphi_{0}(x) +c_{0} t$, where
\begin{equation}\label{GR-def}
\varphi_{0}(x) := -\frac{1}{c_{0}} \ln [\cos (c_{0}x)],\quad c_{0}:= \arctan g_0,
\end{equation}
which is called {\it the grim reaper}. In higher space dimension, Huisken \cite{Huisken} proved in 1989 that, in case $N\geq 1,\ g=0$, the solution of \eqref{eq0}-\eqref{bdry0} converges to a minimal surface. In 1994, Altschuler and Wu \cite{AW2} extended their result in \cite{AW1} to two dimension case: when $N=2$, $D$ is strictly convex, $g=g(x)$ with $|D_T g|$ small, a solution either converges to a minimal surface or to a TS. Recently, Ma, Wang and Wei \cite{MWW} derived uniform gradient estimates for the problem with $N\geq 1$, $g=g(x)$ and $D$ being strictly convex, and proved that the solution converges to a TS. 

\medskip
\noindent
2. {\it The cases with driving forces: $b\not=0$.}
Mean curvature flows with driving forces arise in the study of scroll waves in excitable media (cf. \cite{KT}),
and in the singular limits of partial differential equations such as the Allen-Cahn equation (cf. \cite{AHM,HKMN}), where the driving force represents the difference of the stability between two stable phases; it is also a topic in geometry analysis (cf. \cite{Ang1, Ang2, ChouZhu} etc.).
The TSs of such equations has also been studied in the last decades. For example, in 2000-2001, Ninomiya and Taniguchi \cite{NT1,NT2} consider the problem \eqref{eq0}-\eqref{bdry0} with $N=\alpha=1$ and $b>0$, they constructed a TS with V-shaped profile, and specified its shape precisely. In 2006, Nara and Taniguchi \cite{Nara-Tani} further studied the stability of this TS. In 2009, Lou \cite{Lou1} also considered the case $N=\alpha=1$. For any given $b\in \R$, he constructed all possible TSs, gave the formulas of their profiles. 

\medskip
\noindent
3. {\it MCFs in a cylinder with non-constant boundary angles.} In case the boundary angle $\arccos g$ is spatially and/or temporally heterogeneous, that is, $g$ depends on $t$ and/or $u$, the problem no longer has TSs as in \eqref{ts1}. Instead, it may have TSs with changing profiles. For example, in 2006 and 2013, Matano et al. \cite{LMN, MNL} considered the equation \eqref{eq0} with $N=\alpha=1,\ b>0$ in a band domain with undulating boundaries. Their problem, in some sense, is equivalent to \eqref{eq0}-\eqref{bdry0} with $N=\alpha=1,\ g=g(u)$ being (almost) periodic. They constructed the so-called {\it (almost) periodic traveling wave solution}: $u=\varphi(x,t)+ct$ for some (almost) periodic profile $\varphi(x,t)$, studied its uniqueness and asymptotic stability. Later, Cai and Lou \cite{CaiLou2} etc. obtained analogues for the problems with $g$ being almost periodic in $t$. To distinguish the generalized (almost) periodic TWs from that in \eqref{ts1}, we will call $\varphi(x)+ct$ as a {\it classical TS}.

Another problem about the MCFs in a cylinder is the problem with unbounded boundary slopes. In the works we mentioned above, the boundary slopes $Du\cdot \nu$ are all bounded. In 2012, Chou and Wang \cite{CW} considered the equation in \eqref{GR-p} with Robin boundary conditions like
\begin{equation}\label{Robin-cond}
u_x (1,t)= u(1,t), \quad u_x(-1,t)=-u(-1,t),\quad  t>0,
\end{equation}
that is, the boundary slopes can be unbounded when $u\to \infty$. In this case, a priori gradient estimates depending on $t$ are possible and so the global existence of the solutions can be obtained in a standard way. However, the uniform-in-time a priori gradient estimates is impossible, although it is necessary in the study for the convergence of the profile of a global solution. Recently, we revisited this problem in \cite{LWY}. We used the zero number argument to give a uniform interior gradient estimates, and then proved the convergence of any solution to the grim reaper in the open interval $(-1,1)$.

\medskip
TSs of generalized mean curvature flows, such as
\eqref{mcf1} and \eqref{eq0} with $\alpha\not=1,\ b\not= 0$, seems not well studied till now. In a series of papers, we want to use the equation \eqref{mcf1} and \eqref{eq0} to  provide a complete exposition on the influence of the dimension $N$, the power $\alpha$, the driving force $b$ and the boundary angle $g$ on the profiles, the propagating speeds and the stability of the corresponding TSs. More precisely, in this series, we will consider the following cases respectively:
\begin{itemize}
\item constant boundary angles;
\item Robin boundary conditions as in \eqref{Robin-cond} (that is, unbounded boundary slopes);
\item almost periodic boundary angles an almost periodic TSs;
\item general inhomogeneous boundary conditions for non-radially symmetric solutions. 
\end{itemize}
This is the first paper of the series. In this paper we first construct {\it all possible} radially symmetric TSs of \eqref{eq0} for any $b\in \R$ (which will  be used frequently in this series for comparison), and then use the TSs to study the existence, uniqueness and the asymptotical behavior of the flows, starting at some initial data, in a cylinder satisfying \eqref{bdry0} with constant boundary angles.

\medskip

Throughout this paper we assume $N>1$ (the plane curves when $N=1$ are simpler and can be considered in a similar way as this paper). We mainly consider the radially symmetric solutions: $u=u(|x|,t)=u(r,t)$. In this case, the equation \eqref{eq0} is rewritten as
\begin{equation}\label{E}
u_t = (H^\alpha +b)\sqrt{1+u^2_r} =\left(\frac{u_{rr}}{1+u^2_r}+\frac{N-1}{r}u_r\right)^\alpha
\left(1+u^2_r\right)^{\frac{1-\alpha}{2}}+b\sqrt{1+u^2_r} ,
\ \quad r>0, \  t>0,
\end{equation}
where
\begin{equation}\label{def-H(r,t)}
H(r,t):= \frac{u_{rr}}{(1+u^2_r)^{3/2}} +\frac{(N-1)u_r}{r\sqrt{1+u^2_r}}
\end{equation}
is the mean curvature. When we consider the problem in the cylinder
$$
\Omega := B\times \R\quad \mbox{with}\quad B:= \{x\in \R^N \mid |x|\leq 1\},
$$
the radially symmetry and the boundary condition \eqref{bdry0} are expressed as
\begin{equation}\label{BC}
u_r(0,t)=0,\quad u_r (1,t) = k(t,u(1,t)):= \frac{g(t,u)}{\sqrt{1-g^2(t,u)}},\quad t>0.
\end{equation}
We consider  in this paper the homogeneous boundary conditions, that is, the case $k$ is a constant. We will study the global existence of solution $u(r,t)$ of \eqref{E}-\eqref{BC} with initial data $u(r,0)=u_0(r)$ satisfying
\begin{equation}\label{cond-IC}
u_0(r)\in C^2 ([0,1]),\quad u'_0(0)=0,\quad u'_0(1)=k\in \R,
\end{equation}
as well as its convergence to the corresponding TS.

Since the equation is a fully nonlinear one when $\alpha\not=1$,
in order to study the global existence and the convergence of $u$, we need not only the uniform-in-time $C^2$ a priori estimates for $u$, but also the uniform-in-time parabolicity for the equation. More precisely, when we consider a flow with $H>0$, we need to show that $H$ has {\it positive lower and upper bounds}, or in the case where $H<0$, we need to show that $H$ has {\it negative lower and upper bounds}. These estimates are crucial in our approach. They turn out to be true under suitable assumptions on the parameters $b,k$ and the initial data $u_0$.

\begin{thm}\label{thm:est}
Let $u(r,t)$ be a classical solution of \eqref{E}-\eqref{BC}-\eqref{cond-IC} in the time interval $[0,T)$, then there exist positive numbers $M_0, M_1, M_2, V_*, V^*, H_*, H^*$, independent of $t$ and $T$, and $\tilde{c}(b,k)>0$ such that
\begin{eqnarray}
|u(r,t) - \tilde{c}(b,k)t|\leq M_0,\quad r\in [0,1],\ t\in [0,T);\\
0< u_r(r,t) \cdot {\rm sgn}(k) \leq  M_1,\quad r\in (0,1],\ t\in [0,T);\\
V_* \leq u_t(r,t) \leq V^*,\quad r\in [0,1],\ t\in [0,T);\\
H_* \leq H(r,t) \cdot {\rm sgn}(k)\leq H^*, \quad r\in [0,1],\ t\in [0,T);\\
|u_{rr}(r,t)|\leq M_2,\quad r\in [0,1],\ t\in [0,T),
\end{eqnarray}
provided the constants $b,k$ and the initial data $u_0$ satisfy one of the following conditions
\begin{enumerate}[{\rm (A).}]
\item $b=0<k$ and $H (r,0) >0 \ \ (r\in [0,1])$;

\item $b<0<k$, $(-b)^{{1/\alpha}} \sqrt{1+k^2}< k N$ and $H^\alpha(r,0) +b >0 \ \ (r\in [0,1])$;

\item $b>0,\ k>0$ and $u''_0(r)>0\ \ (r\in [0,1])$;

\item $\alpha=\frac{q}{p}$ for some positive odd number $p$ and $q$, $b>0>k$, $b^{{1/\alpha}} \sqrt{1+k^2}> -k N$, $u''_0(r)<0$ and $ H^\alpha(r,0)+b>0\ \ (r\in [0,1])$.
\end{enumerate}
\end{thm}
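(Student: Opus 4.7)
\emph{Overall strategy.} The plan is to use the radially symmetric translating solutions $\varphi(r)+\tilde c(b,k)t$ constructed earlier in the paper as barriers, together with the parabolic maximum principle applied in succession to $u$, $v:=u_r$, $w:=u_t$, and finally $H$. For each regime (A)--(D) a corresponding TS exists whose profile $\varphi$ realizes the prescribed boundary slope $k$ and whose mean curvature has the sign compatible with the sign assumption on the initial data. Since $u_0,\varphi\in C^2([0,1])$ share boundary data at $r=0,1$, there exist constants $C_\pm$ with $\varphi+C_-\leq u_0\leq \varphi+C_+$; then $\varphi(r)+\tilde c(b,k)t+C_\pm$ form a sub-/super-solution pair for \eqref{E}--\eqref{BC}, and the parabolic comparison principle yields the first estimate $|u(r,t)-\tilde c(b,k)t|\leq M_0$.

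\emph{Gradient and speed.} Differentiating \eqref{E} in $r$ produces a linear parabolic equation for $v=u_r$ with fixed Dirichlet-type boundary data $v(0,t)=0$ and $v(1,t)=k$; the strong maximum principle together with Hopf's lemma at $r=0$ yields $u_r\cdot\mathrm{sgn}(k)>0$ on $(0,1]$ and the uniform bound $|u_r|\leq M_1:=\max\{|k|,\,\|u_0'\|_\infty\}$. Differentiating \eqref{E} in $t$ produces a linear parabolic equation for $w=u_t$ whose boundary conditions reduce to the homogeneous Neumann ones $w_r(0,t)=w_r(1,t)=0$; assuming the uniform parabolicity discussed below, the maximum principle then gives $V_*:=\min_r u_t(r,0)\leq u_t(r,t)\leq \max_r u_t(r,0)=:V^*$.

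\emph{Curvature and second derivative.} The identity $H^\alpha+b=u_t/\sqrt{1+u_r^2}$ read off from \eqref{E} immediately converts the previous bounds into a two-sided bound on $H^\alpha+b$. The remaining task is a \emph{one-sided} estimate keeping $H$ away from zero: under (A) and (C) one preserves $H>0$ along the flow; under (B) the stronger inequality $H^\alpha+b>0$ is preserved, which is exactly where the compatibility condition $(-b)^{1/\alpha}\sqrt{1+k^2}<kN$ enters by providing a consistent TS barrier; (D) is the mirror image with reversed signs, the parity condition $\alpha=q/p$ ($p,q$ odd) ensuring $H^\alpha$ is well defined for $H<0$. Once $H_*\leq H\cdot\mathrm{sgn}(k)\leq H^*$ is in hand, the explicit formula
\begin{equation}
u_{rr}=(1+u_r^2)^{3/2}\Bigl(H-\tfrac{N-1}{r\sqrt{1+u_r^2}}\,u_r\Bigr),\qquad r\in(0,1],
\end{equation}
combined with the expansion $u_r(r,t)=u_{rr}(0,t)\,r+o(r)$ as $r\to 0^+$ valid for $C^2$ radial functions, yields $|u_{rr}|\leq M_2$ uniformly on $[0,1]$.

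\emph{Main obstacle.} The hard step is the one-sided mean-curvature bound. Because $\alpha\neq 1$ in general, the uniform parabolicity of the fully nonlinear equation \eqref{E}---and hence the validity of the two linearizations used above for $v$ and $w$---hinges precisely on $|H|$ staying uniformly bounded away from zero. Each of the assumptions (A)--(D) is tailored so that a definite sign of $H$ (or of $H^\alpha+b$) becomes an invariant of the flow; verifying this requires writing down the evolution equation satisfied by $H$, carefully tracking the boundary contributions at $r=0$ and $r=1$ produced by $u_r(0,t)=0$ and $u_r(1,t)=k$, and applying the maximum principle separately in each of the four parameter regimes. This invariance argument, together with the compatibility conditions on $b$, $k$ and $u_0$ it demands, is where the bulk of the technical work lies.
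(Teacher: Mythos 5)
Your handling of the $C^0$, $u_r$, and $u_t$ estimates is essentially the route the paper takes (TS barriers and the maximum principle applied to derivatives of $u$), and for cases (A) and (B) you are right that the identity $H^\alpha+b=u_t/\sqrt{1+u_r^2}$ together with the two-sided $u_t$ bound immediately gives $H$ bounded away from zero because $b\le 0$. One small technical point: the paper derives the $u_r$ bound by differentiating the Cartesian form \eqref{eq0} in $x_1$ (so the equation for $w_1=u_{x_1}$ has bounded coefficients) rather than differentiating the radial equation, precisely to avoid the singular coefficient $(N-1)/r$ at $r=0$ that your ``parabolic equation for $v=u_r$ with Hopf's lemma at $r=0$'' glosses over; this is a cosmetic but not negligible difference.

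The genuine gap is in cases (C) and (D). You correctly flag the one-sided bound keeping $H$ away from zero as the main obstacle, but you then propose to resolve it by ``writing down the evolution equation satisfied by $H$, carefully tracking the boundary contributions'' and applying the maximum principle, without actually doing so. That is not merely an omission of routine detail: it is a genuinely different strategy from the paper, and it is not clear it succeeds. For $b>0$ the two-sided $u_t$ bound only controls $H^\alpha+b$, which does not keep $H$ away from zero; the boundary condition at $r=1$ fixes $u_r(1,t)=k$ but leaves $H(1,t)$ free, so the natural boundary term in an $H$-evolution argument does not obviously have a sign; and the coefficient $(N-1)/r$ is singular at the origin. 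The paper instead proceeds through four substeps that your sketch does not contain: Lemma~\ref{lem:H(0,t)>0} uses the zero number argument against the one-parameter family of shifted TSs $\Phi(\cdot;c,b)+ct+d$ to get a positive lower bound on $u_{rr}(0,t)$; Lemma~\ref{lem:ur>Phi'1} compares $\eta:=u_r-\Phi_1'$ on $[r_0,1]\times[0,T_0-\epsilon]$ (away from $r=0$) to obtain $u_r>\Phi_1'$; Lemma~\ref{lem:urr>0} applies the maximum principle to $\zeta:=u_{rr}$ (not $H$), using the favorable sign of the extra term $\alpha^2(N-1)^2 u_r^2H^{\alpha-2}/(r\xi^4)$ and a Hopf-lemma argument at $r=1$, to conclude $u_{rr}>0$; and Proposition~\ref{prop:H>delta} then finishes with the algebraic bound
\begin{equation*}
H\ \ge\ \frac{(N-1)u_r}{r\sqrt{1+u_r^2}}\ \ge\ \frac{N-1}{\sqrt{1+k^2}}\cdot\min_{r\in[0,1]}\frac{\Phi_1'(r)}{r}\ =:\ H_*.
\end{equation*}
Until you either carry out a concrete maximum-principle computation for $H$ itself (with the boundary terms handled) or reproduce a chain of this kind, the key estimate of the theorem is unproved in your write-up.
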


In case (D) we actually consider solutions with negative curvature: $H<0$, so in order for $H^\alpha$ to make sense and for the equation to be a parabolic one: $H^{\alpha-1}>0$, we consider only the case where $\alpha=\frac{q}{p}$ for positive odd $p,q$ (see more in Subsection \ref{subsec:TSb>c>0}).
As we will see in Subsections 4.1 and 5.1, in the cases (A) and (B) where $b\leq 0$, the above estimates can be derived from the equation \eqref{E} and the comparison principle. In the case (C) where $H>0$ and $b>0$, however, the derivation for the positive lower bound of $H$ is not so easy, which includes several steps: to derive a positive lower bound for $u_{rr}(0,t)$ by using the zero number argument (Lemma 5.2), to show $u$ is steeper than a TS $\Phi_1$ (Lemma 5.3), to show $u_{rr}(r,t)>0$ for all $r\in [0,1]$ (Lemma 5.4), and to derive the positive lower bound for $H$ (Proposition 5.5). (In the case (D) where $H<0$ and $b>0$, the derivation for the negative upper bound of $H$ is similar.)

Once we have the a priori estimates in hand, we can show the convergence of $u$ to the corresponding TS, which is our main result in this paper.

\begin{thm}\label{thm:stable}
Assume the hypotheses {\rm (A), (B), (C)} or {\rm (D)} in the previous theorem holds. Then
\begin{enumerate}[\rm (i)]
\item the problem \eqref{E}-\eqref{BC} has a unique (up to a shift) TS $\wt{\Phi}(r)+\tilde{c}(b,k)t$;
\item the problem \eqref{E}-\eqref{BC}-\eqref{cond-IC} has a unique time-global classical solution $u(r,t)$;
\item there exists $M\in \R$ such that
\begin{equation}\label{asy-stable}
\|u(\cdot,t)-\wt{\Phi}(\cdot)- \tilde{c}(b,k)t\|_{L^\infty([0,1])}\to M \ \ \mbox{as}\ \ t\to \infty.
\end{equation}
\end{enumerate}
\end{thm}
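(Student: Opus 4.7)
The proof has three ingredients. For (i), I would combine the TS constructions from the preceding sections (which furnish existence of $\wt{\Phi}+\tilde{c}(b,k)t$ in each of the cases (A)--(D)) with a uniqueness argument for the profile ODE: the boundary data $\wt{\Phi}'(0)=0$, $\wt{\Phi}'(1)=k$, together with an integral compatibility condition obtained from the profile equation, fix $\tilde{c}(b,k)$, and the remaining second-order ODE has a unique solution once $\wt{\Phi}(0)$ is chosen. For (ii), local-in-time classical solutions exist by standard parabolic theory; since Theorem~\ref{thm:est} supplies uniform-in-time $C^2$ bounds as well as two-sided bounds on $H$ that render the equation uniformly parabolic along the orbit, Schauder estimates will yield $C^{2+\beta,1+\beta/2}$ bounds independent of $t$, and a standard continuation argument then extends the solution to $[0,\infty)$.

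For (iii) I will set $v(r,t):=u(r,t)-\tilde{c}(b,k)t$, which satisfies the autonomous fully nonlinear parabolic equation obtained from \eqref{E} by subtracting $\tilde{c}$, and admits $\wt{\Phi}$ as a stationary solution. For each $C\in\R$, $\wt{\Phi}(r)+C+\tilde{c}(b,k)t$ is a classical TS of \eqref{E}--\eqref{BC}, so the parabolic comparison principle applied to $u$ and such a TS (with $C=M^+(t_0)$) shows that
\[
M^+(t):=\max_{r\in[0,1]}\bigl[v(r,t)-\wt{\Phi}(r)\bigr]\qquad\text{is non-increasing in }t,
\]
while, symmetrically, $M^-(t):=\min_{r\in[0,1]}[v(r,t)-\wt{\Phi}(r)]$ is non-decreasing. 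Both are bounded by Theorem~\ref{thm:est}, so they converge to limits $M^+_\infty\geq M^-_\infty$.

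Next I will promote this to a pointwise statement. Theorem~\ref{thm:est} and Schauder estimates give precompactness of $\{v(\cdot,t):t\geq 1\}$ in $C^2([0,1])$; for any $t_n\to\infty$, a subsequence yields $v(\cdot,t_n+s)\to V(\cdot,s)$ in $C^{2,1}_{\rm loc}([0,1]\times\R)$, with $V$ an entire solution of the autonomous PDE obeying \eqref{BC} and satisfying $\max_r[V(\cdot,s)-\wt{\Phi}]\equiv M^+_\infty$, $\min_r[V(\cdot,s)-\wt{\Phi}]\equiv M^-_\infty$ for every $s$. The function $D(r,s):=V(r,s)-\wt{\Phi}(r)-M^-_\infty\geq 0$ satisfies a linear uniformly parabolic equation
\[
D_s=A(r,s)D_{rr}+B(r,s)D_r,\qquad A\geq c_0>0,
\]
obtained from the mean value theorem applied to the nonlinear operator on the right-hand side of \eqref{E}, with $A$ bounded below thanks to the two-sided curvature bounds in Theorem~\ref{thm:est}. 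Since $\min_r D(\cdot,s)=0$ for every $s$, the strong minimum principle, combined with Hopf's lemma at $r=0$ and $r=1$ (where the boundary conditions force $D_r(0,\cdot)=D_r(1,\cdot)=0$ and so Hopf's alternative gives $D\equiv 0$), implies $V\equiv\wt{\Phi}+M^-_\infty$. This forces $M^+_\infty=M^-_\infty=:c_*$, hence $v(\cdot,t)\to\wt{\Phi}+c_*$ uniformly on $[0,1]$, and (iii) follows with $M=|c_*|$.

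The hardest part will be the boundary strong maximum principle: because $D_r$ vanishes at both endpoints by the matched Neumann condition, Hopf's lemma is invoked only in its degenerate alternative $D\equiv 0$, and at $r=0$ the coordinate singularity in $H$ (through the $(N-1)/r$ term in \eqref{def-H(r,t)}) will require extra care, handled via the radial symmetry $v_r(0,\cdot)=0$ (for example, by reflecting to $\R^N$). The entire convergence argument hinges on the non-degeneracy of $H$ supplied by Theorem~\ref{thm:est}: without the lower bound $|H|\geq H_*>0$, the leading coefficient $A\propto H^{\alpha-1}$ could blow up when $\alpha<1$ or vanish when $\alpha>1$, and the strong maximum principle would fail. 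This is precisely why Theorem~\ref{thm:est} is set up to give two-sided, rather than one-sided, bounds on $H$.
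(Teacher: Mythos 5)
Your proposal is correct, and it follows the same broad strategy as the paper (a priori estimates from Theorem~\ref{thm:est} $\Rightarrow$ global existence $\Rightarrow$ compactness $\Rightarrow$ $\omega$-limit analysis via comparison), but the final convergence step is organized differently. The paper does not introduce the Lyapunov-type quantities $M^\pm(t)$ explicitly; instead it takes a subsequential limit $U(r,t)$ of $\tilde u(r,t_n+t)$, sandwiches $U(\cdot,0)$ between two shifts $\wt\Phi+M_1\preceq U(\cdot,0)\preceq\wt\Phi+M_2$, applies the \emph{strong comparison principle} to the nonlinear problem directly (after one unit of time $U(\cdot,1)$ drops strictly below $\wt\Phi+M_2-2\varepsilon$), and propagates that gap back to the original orbit to contradict the definition of an $\omega$-limit; the singleton property of the $\omega$-limit set is then handled in a second, similar comparison step. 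You instead first establish the monotonicity and convergence of $M^\pm(t)$, pass to an entire limit solution $V$, linearize by the mean value theorem, and invoke the strong minimum principle plus Hopf's lemma on $D=V-\wt\Phi-M^-_\infty$. Both mechanisms rest on the same uniform parabolicity furnished by the two-sided bounds on $H$, and both are valid; your route makes the linearized equation explicit (and hence needs the extra care near $r=0$ you flag), while the paper's argument stays purely at the level of nonlinear comparison and avoids the linearization. One small imprecision in your write-up: at $r=0$ there is no boundary and hence no Hopf alternative to invoke---$r=0$ is an interior point of $B$ once you pass to the full $\R^N$ picture, so the ordinary interior strong minimum principle suffices there; Hopf's lemma is only needed at $r=1$, where $D_r(1,\cdot)=0$ forces the degenerate alternative $D\equiv0$. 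You do note the reflection fix in the last paragraph, so this is a matter of phrasing rather than substance. Finally, your sketch of part (i) is lighter than the paper's: the paper obtains uniqueness of $\tilde c(b,k)$ from the strict monotonicity of $\Psi(1;c,b)$ in $c$ established in Theorems~\ref{thm:TSb<0}, \ref{thm:TSc>b>0}, \ref{thm:TSb>c>0}, rather than from an ``integral compatibility condition,'' but the conclusion is the same.
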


This theorem extends the result in \cite{AW1,AW2} to the cases where $\alpha\not= 1$ and $b\not= 0$, for radially symmetric solutions.
The convergence in (iii) is equivalent to say that the TS is asymptotically  stabile.

A TS of the equation \eqref{E} (without the restriction \eqref{BC} at this stage) has the form  $u=\varphi(r)+ct$, with $(c, \varphi)$ satisfying
\begin{equation}\label{TS}
\left\{
\begin{array}{l}
\displaystyle  c =\left(\frac{\varphi''}{1+\varphi'^2}+\frac{N-1}{r}\varphi'\right)^\alpha
\left(1+\varphi'^2_r\right)^{\frac{1-\alpha}{2}}+b\sqrt{1+\varphi'^2},
\ \quad r>0, \\
  \varphi(0)=\varphi'(0)=0.
\end{array}
\right.
\end{equation}
The additional condition $\varphi(0)=0$ is used to normalize the profile of $\varphi$.
We consider only the case $c> 0$, since when the hypersurface moves with speed $c<0$, we only need to regard the flow as one moving to the $-x_{N+1}$ direction with a positive speed. To show the existence and the uniqueness of the TS of \eqref{E}-\eqref{BC}, we first prepare the TSs of \eqref{E}, and then select some of them to satisfy the boundary condition at $r=1$. The following result presents {\it all possible} solutions of \eqref{TS}.

\begin{thm}\label{thm:TS-in-whole}
For any $b\in \R$ and any $c>0$, the problem \eqref{TS} has a unique solution $\varphi=\Phi(r;c,b)$ in a maximal existence interval $J:= [0,R_\infty(c,b))$. It is smooth in $J$, $\Phi(r;c,b)$ and $\Phi'(r;c,b)$ are strictly increasing in $c$ and $-b$. Moreover,

\begin{enumerate}[{\rm (i).}]
 \item in case $b=0$, we have $R_\infty = \infty$, $\Phi'(r),\ \Phi''(r)>0$ for $r>0$ and
 \begin{equation}\label{phi-shape-b=0}
 \Phi(r) \sim \frac{c}{(\alpha+1)(N-1)^\alpha}  r^{\alpha+1} \mbox{\ \ as\ \ } r\to \infty;
 \end{equation}

 \item in case $b<0$, we have $R_\infty(c,b)<\infty$, $\Phi'(r),\ \Phi''(r)>0$ in $(0,R_\infty)$ with  \begin{equation}\label{phi-psi-b<0}
   \Phi( R_\infty-0) <\infty,\quad  \Phi'(R_\infty-0) =   \Phi''(R_\infty-0) =\infty;
     \end{equation}

 \item in case $c>b>0$, we have $R_\infty =\infty$, $\Phi'(r),\ \Phi''(r)>0$ in $(0,\infty)$ with
 $$
 \Phi'(r)\to \Psi_0:= \frac{\sqrt{c^2-b^2}}{b}\mbox{\ \  as\ \ } r\to \infty;
 $$

 \item in case $c=b>0$, we have $R_\infty = \infty$, and $\Phi(r)\equiv 0$;

 \item in case $\alpha=\frac{q}{p}$ for some odd number $p,q>0$ and $b>c>0$, we have $R_\infty(c,b)<\infty$, $\Phi'(r),\ \Phi''(r)<0$ in $(0,R_\infty)$, and
  \begin{equation}\label{phi-psi-b>0}
  \Phi( R_\infty-0) > -\infty,\quad  \Phi'(R_\infty-0) =   \Phi''(R_\infty-0) = -\infty.
  \end{equation}
\end{enumerate}
\end{thm}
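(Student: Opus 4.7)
The plan is to recast \eqref{TS} as a first-order integral equation that absorbs the singular coefficient at $r=0$, apply a contraction mapping to obtain a unique smooth solution on a maximal interval $[0,R_\infty)$, deduce monotonicity in $(c,-b)$ from that integral equation, and then analyze the asymptotic behavior on $[0,R_\infty)$ case by case.

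Setting $\psi=\varphi'$ and using the identity $H(r)=r^{1-N}\bigl(r^{N-1}\psi/\sqrt{1+\psi^2}\bigr)'$, the profile equation is equivalent to
\[
\frac{\psi(r)}{\sqrt{1+\psi(r)^2}}=\frac{1}{r^{N-1}}\int_0^r s^{N-1}\left(\frac{c}{\sqrt{1+\psi(s)^2}}-b\right)^{1/\alpha}ds,\qquad \psi(0)=0,
\]
where in case (v) the fractional power is interpreted via $\alpha=q/p$ with $p,q$ odd. A Banach contraction on $C([0,\delta])$ then gives local existence, uniqueness, and the initial slope $\psi'(0)=(c-b)^{1/\alpha}/N$, and a standard bootstrap yields $\psi\in C^\infty$ wherever $c/\sqrt{1+\psi^2}-b$ is nonzero. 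Let $R_\infty=R_\infty(c,b)$ be the supremum of the maximal existence interval. Strict monotonicity of $\Phi'$ in $c$ and in $-b$ follows from a first-crossing argument on the integral equation: if $(c_1,-b_1)\le(c_2,-b_2)$ with strict inequality somewhere, the integrand for the second pair is strictly larger as long as $\psi_1(r)\le\psi_2(r)$, and a first equality $\psi_1(r_0)=\psi_2(r_0)$ for $r_0>0$ contradicts the resulting strict inequality of the integrals. Integrating transfers the monotonicity to $\Phi$ itself.

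For the case analysis, (iv) is immediate since $\psi\equiv 0$ solves the integral equation and is unique. In (i), positivity of the integrand forces $r^{N-1}\psi/\sqrt{1+\psi^2}$ to grow strictly, hence $\psi>0$; checking the ansatz $\varphi\sim c r^{\alpha+1}/((\alpha+1)(N-1)^\alpha)$ for $r\to\infty$ verifies \eqref{phi-shape-b=0}, and the a priori bound $H=(c/\sqrt{1+\psi^2})^{1/\alpha}\le c^{1/\alpha}$ rules out blow-up at finite $r$. In (ii), $H^\alpha\ge-b>0$ gives a uniform positive lower bound on $H$; together with $(N-1)\psi/(r\sqrt{1+\psi^2})\le(N-1)/r$, the slope contribution $\psi'/(1+\psi^2)^{3/2}$ must stay at least $(-b)^{1/\alpha}-(N-1)/r$, which forces blow-up of $\psi$ at a finite $R_\infty$, and integration of the blow-up rate gives $\Phi(R_\infty-0)<\infty$. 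Case (v) is dual with reversed signs. For (iii), I claim $\psi<\Psi_0=\sqrt{c^2-b^2}/b$ on $[0,R_\infty)$: at any first crossing $\psi(r_0)=\Psi_0$ the equation gives $H(r_0)=0$, but the geometric lower bound forces $H(r_0)\ge(N-1)\Psi_0/(r_0\sqrt{1+\Psi_0^2})>0$, a contradiction. Monotonicity and boundedness of $\psi$ then yield $\psi\to\Psi_0$ as $r\to\infty$ and $R_\infty=\infty$.

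The main obstacle I anticipate is twofold: carefully handling the singularity at $r=0$ in the contraction argument when $\alpha\neq 1$, so that the fractional power does not destroy the Lipschitz estimate needed to close the fixed point; and making the blow-up rates in cases (ii) and (v) precise enough to conclude simultaneously that $\Phi(R_\infty-0)$ is finite while $\Phi'(R_\infty-0)=\infty$. The monotonicity and the remaining case work are relatively routine once these are in hand.
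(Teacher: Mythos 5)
Your reformulation $H=r^{1-N}\bigl(r^{N-1}\psi/\sqrt{1+\psi^2}\bigr)'$ and the resulting integral equation are correct, and recasting existence/uniqueness as a fixed point of
\[
\zeta(r)=\frac{1}{r^{N-1}}\int_0^r s^{N-1}\Bigl(\tfrac{c}{\sqrt{1+\psi(s)^2}}-b\Bigr)^{1/\alpha}ds,\qquad \psi=\zeta/\sqrt{1-\zeta^2},
\]
is a genuinely different route from the paper, which instead regularizes $\frac{N-1}{r}$ to $\frac{N-1}{r+\varepsilon}$, solves the $\varepsilon$-problems, proves monotonicity in $\varepsilon$, and passes to the limit. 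Your route is cleaner for local existence and automatically handles the origin, at the cost of having to watch the Lipschitz constant of $x\mapsto x^{1/\alpha}$ near $x=0$ (which you correctly flag, and which is only dangerous in the degenerate case $c=b$ where the trivial solution is separate anyway). Your blow-up criteria for cases (ii) and (v), and the $\psi<\Psi_0$ barrier in case (iii), are sound; one small correction is that the strict comparison in $(c,-b)$ should be run on the differential form $\zeta'=G(\psi)-(N-1)\zeta/r$ at a first contact, not directly on the integrals, because $G$ is \emph{decreasing} in $\psi$ so ``$\psi_2>\psi_1$ forces a larger integrand for solution 2'' is false as stated; the ODE version gives the sign you need since at a first contact $\psi_1=\psi_2$ and $G_1<G_2$.

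The genuine gap is in case (i). Asserting the asymptotic \eqref{phi-shape-b=0} by ``checking the ansatz'' is not a proof: a formal solution that satisfies the equation to leading order at infinity says nothing about whether the unique solution starting from $\psi(0)=0$ actually approaches it. The paper devotes two nontrivial steps to this: first establishing two-sided bounds $A_1 r^\alpha\le\Psi(r)\le A_2 r^\alpha$ for large $r$ by integrating $(r^{N-1}\zeta)'$ between comparable radii, and then setting $\xi=\Psi r^{-\alpha}$ and ruling out, via the ODE for $\xi$, every alternative to $\xi\to c/(N-1)^\alpha$. Without something of this kind your case (i) has only a heuristic, not a proof. A related (acknowledged) gap is that in cases (ii) and (v) the simultaneous facts $\Phi(R_\infty-0)$ finite, $\Phi'(R_\infty-0)=\pm\infty$, and $\Phi''(R_\infty-0)=\pm\infty$ require a quantitative expansion of $\zeta$ near $R_\infty$ of the form $1-\zeta(r)\gtrsim a(R_\infty-r)$ (with $a>0$, whose positivity itself rests on the strict monotonicity of $R_\infty$ in $c$), followed by integrating $\Psi\le(1-a\rho)/\sqrt{a\rho}$; you note the issue but would need to actually produce that expansion, and separately handle $\Phi''\to\infty$, which the paper does by a contradiction argument on the second derivative of $\psi$. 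Finally, in case (iii) the convergence $\psi\to\Psi_0$ needs $\psi$ monotone; $\zeta$ bounded by $1$ together with $\zeta'(0)>0$ does not by itself give $\psi'>0$ on all of $(0,\infty)$, so you should supply the analogue of the paper's ``Claim 1'' (differentiate the equation for $\psi$ and show $\psi'$ cannot vanish).
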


\medskip
As we have mentioned above, in case $b=0$, the profile of the TS is a grim reaper when $N=\alpha=1$. Here we present the (asymptotic) profiles of the TSs for all $\alpha >0$ as in \eqref{phi-shape-b=0}.
In the special case $\alpha=1$, our result coincides with that in \cite{CGHNR}, where the authors consider the Allen-Cahn equation and proved that the level set of its solution (which roughly obeys the MCF) is asymptotically a paraboloid
$$
\Phi(r) \sim \frac{c}{2(N-1)}|x|^2,\quad r\to \infty,
$$
(see also \cite{AW2} for the case $N=2,\ b=0,\ \alpha=1$).
In cases (ii) and (v) we have $R_\infty<\infty$ and $|\Phi(R_\infty-0)|<\infty$, so the profiles of the TSs are {\it cup-like} ones in case (ii) and  {\it cap-like} ones in case (v), as it was shown in \cite{Lou1} for the case $N=\alpha=1$.
In case (iii), the profiles of the TSs are cone-shaped ones, as the V-shaped TSs in the case $N=\alpha=1$ (\cite{NT1, NT2}). However, each of them is only an approximate but not exact cone since
$$
|\Phi(r)-\Psi_0 r|\to \infty\ (r\to \infty)
$$
in some cases (see details in Section 3).

We then select some TSs from the above candidates so that they lie in the cylinder $\Omega$, and satisfy the boundary conditions in \eqref{BC} for some $k\in \R$, that is,
\begin{equation}\label{TS-BC}
\varphi'(1) = k.
\end{equation}

\begin{thm}\label{thm:TS-in-cylinder}
Assume one of the following conditions holds:
\begin{enumerate}[{\rm (a).}]
\item $b=0<k$;
 \item $b<0<k$ and $(-b)^{{1/\alpha}} \sqrt{1+k^2}< k N$;
 \item $b>0$ and $k> 0$;
 \item $\alpha=\frac{q}{p}$ for some positive odd number $p$ and $q$, $b>0>k$, and $b^{{1/\alpha}} \sqrt{1+k^2}> -k N$.
\end{enumerate}
Then, there exists a unique $\tilde{c}=\tilde{c}(b,k)>0$ such that $(c,\varphi)=(\tilde{c},\wt{\Phi}(r))$ with $\wt{\Phi}(r):= \Phi(r;\tilde{c}(b,k),b)$ solves the problem \eqref{TS} and \eqref{TS-BC}.

In addition, the conclusions hold even if $k=\infty$ in case {\rm (b)}, or, $k=-\infty$ in case {\rm (d)}.
\end{thm}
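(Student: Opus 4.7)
The plan is to apply the intermediate value theorem to the map $c \mapsto \Phi'(1;c,b)$ from Theorem \ref{thm:TS-in-whole}. For $b$ fixed in one of the admissible cases, set
\[
\mathcal{D}_b := \{c>0 : R_\infty(c,b)>1\}, \qquad G_b(c) := \Phi'(1;c,b) \quad (c\in\mathcal{D}_b).
\]
By Theorem \ref{thm:TS-in-whole} and standard ODE continuous dependence, $G_b$ is strictly increasing and continuous on the interval $\mathcal{D}_b$, and $R_\infty(c,b)$ is monotone in $c$. The problem therefore reduces to computing the one-sided limits of $G_b$ at the endpoints of $\mathcal{D}_b$ in each of the four cases and verifying that $k$ lies in the image; uniqueness of $\tilde c$ is immediate from strict monotonicity.

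The key tool for the boundary limits is a spherical comparison. As $c \to 0^+$ with $b \neq 0$, the TS equation formally degenerates to the constant-mean-curvature equation $H^\alpha + b = 0$, whose radial solution with $\Phi(0)=\Phi'(0)=0$ is the spherical cap of radius $\rho = N/|b|^{1/\alpha}$ (upper cap in case (b), lower cap in case (d)). Using the strict $c$-monotonicity and spheres of radii $\rho \pm \varepsilon$ as barriers, $\Phi(\cdot;c,b)$ converges to this cap on compact subsets of $[0,\rho)$; hence $R_\infty(c,b) \to \rho$, and when $\rho > 1$, $G_b(c) \to \pm 1/\sqrt{\rho^2 - 1}$ with sign determined by the case. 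The complementary limit $G_b \to \pm \infty$ arises either at a finite $c$ where $R_\infty$ reaches $1$ (forcing $\Phi'(1^-) = \pm\infty$ by Theorem \ref{thm:TS-in-whole}(ii)/(v)), or, in the $c\to\infty$ regimes of (a) and (c), by integrating the identity $H^\alpha + b = c/\sqrt{1+\Phi'^2}$: if $\Phi'(1)$ stayed bounded then $H \gtrsim c^{1/\alpha}$ on $[1/2,1]$, so $\Phi''$ would blow up there, contradicting the bound.

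Once these endpoint values are known, each case is a direct check. In (a), $\mathcal{D}_0 = (0,\infty)$ and $G_0$ maps onto $(0,\infty)$, so any $k > 0$ is attained. In (b), the hypothesis $(-b)^{1/\alpha}\sqrt{1+k^2} < kN$ rearranges precisely to $k > 1/\sqrt{\rho^2-1}$ and also forces $\rho > 1$; with $\mathcal{D}_b = (0, c_*(b))$ defined by $R_\infty(c_*(b),b) = 1$, $G_b$ maps onto $(1/\sqrt{\rho^2-1}, \infty)$, which contains $k$. In (c), for $c \leq b$ either no TS with $\Phi'(1) > 0$ exists or $\Phi'(1) \leq 0 \neq k$; restricting to $c > b$, Theorem \ref{thm:TS-in-whole}(iii)-(iv) gives $G_b(b^+) = 0$ and $G_b(\infty) = \infty$, so $G_b$ maps $(b,\infty)$ onto $(0,\infty)$. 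Case (d) mirrors (b) via the oddness of $\alpha = q/p$: restrict to $c \in (0,b)$ where $\Phi' < 0$, and the hypothesis places $k$ in the image whether $\rho > 1$ (lower endpoint $-1/\sqrt{\rho^2-1}$) or $\rho \leq 1$ (lower endpoint $-\infty$). The $k = \pm\infty$ addendum amounts to choosing $\tilde c$ at the critical value where $R_\infty = 1$, at which $\Phi'(1^-) = \pm\infty$.

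The main obstacle is making the $c \to 0^+$ sphere-cap convergence rigorous, especially the convergence of the slope at $r = 1$. This requires the barrier argument above together with uniform $C^1$ bounds on $\Phi(\cdot;c,b)$ on compact subsets of $[0,\rho)$ coming from the sphere sub- and supersolutions, plus care with the continuity of $R_\infty$ in $c$ at the critical values. A secondary technical point is the positivity of $c_*(b)$ (resp.\ $c^*(b)$) under the relevant hypotheses, which also follows from sphere comparison.
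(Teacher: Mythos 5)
Your proposal is correct and follows essentially the same route as the paper: reduce to the intermediate value theorem for the strictly increasing map $c\mapsto\Phi'(1;c,b)$ on the set where $R_\infty(c,b)>1$, identify the endpoint values from (i) the spherical cap at $c=0$ (or the flat solution at $c=b$ in case (c)), and (ii) the blow-up of $\Phi'$ as $r\to R_\infty$ (via the Claim in Step~11 of the proof of Theorem~\ref{thm:TSb<0}). One small simplification worth noting: what you set up as a limit $c\to 0^+$ with barrier arguments is handled in the paper by simply plugging $c=0$ into the integrated equation \eqref{eq-zeta} (equivalently the two-sided bound \eqref{>zeta>}), which yields $\zeta(r;0,b)=(-b)^{1/\alpha}r/N$ exactly; the "obstacle" you flag about making the sphere-cap convergence rigorous therefore dissolves, since at $c=0$ the profile is explicit and $\Phi'(1;0,b)=1/\sqrt{\rho^2-1}$ (resp.\ its negative) is read off directly. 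The rest — the endpoint at $c$ with $R_\infty(c,b)=1$, the $c\to\infty$ blow-up of the slope in cases (a) and (c), and the $k=\pm\infty$ addendum at the critical $c$ — matches the paper's computations.
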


These TSs, then, can be used to give necessary a priori estimates for $u$, and $\wt{\Phi}(r)+\tilde{c}(b,k)t$ will be the asymptotic limit of $u$ as shown in Theorem \ref{thm:stable}.

The paper is arranged as follows. In Sections 2 we construct TSs for the case $b\leq 0$. In Section 3, we construct TSs for the case $b>0$. Theorem \ref{thm:TS-in-whole} follows from Theorems \ref{thm:TSb<0}, \ref{thm:TSc>b>0} and \ref{thm:TSb>c>0}. Theorem \ref{thm:TS-in-cylinder} is proved in Subsections 2.2, 3.1 and 3.2.
In Section 4, we consider the problem \eqref{E}-\eqref{BC} with $b\leq 0$. We first give a priori estimates in Theorem \ref{thm:est-b<0}, and then prove the global existence and convergence of the solution to TS in Theorem \ref{thm:stability-TS-b<0}. In Section 5, we consider the
problem \eqref{E}-\eqref{BC} in case $b>0$. In the first subsection we study the case $k>0$. The main difficulty is to derive the positive lower bound for $H$, which follows from a series of lemmas and is given in Proposition \ref{prop:H>delta}. Then we obtain the global existence and the convergence of the solution of the initial-boundary value problem in Theorem \ref{thm:stabilty-TS-c>b>0}. In the last subsection we consider the case $b>0>k$ and present the a priori estimates, the global existence and the convergence results in Theorem \ref{thm:b>c>0}. In summary, Theorem \ref{thm:est} follows from Theorems \ref{thm:est-b<0}, Lemma \ref{lem:c>b>0-C1}, Proposition \ref{prop:H>delta} and Theorem \ref{thm:b>c>0}; Theorem \ref{thm:stable} follows from Theorems \ref{thm:stability-TS-b<0}, \ref{thm:stabilty-TS-c>b>0} and \ref{thm:b>c>0}.


\section{Translating Solutions in Case $b\leq 0$}
In this section we study the TSs in case $b\leq 0<c$. We first construct TSs of \eqref{TS}, and then select some of them to satisfy \eqref{TS-BC} for given $k>0$.

\subsection{TSs on their maximal existence intervals}
In this subsection we construct solutions of \eqref{TS} in case $b\leq 0$. Our main result to be proved is the following one.

\begin{thm}\label{thm:TSb<0}
For any $b\leq 0$ and any $c>0$, the problem \eqref{TS} has a unique solution $\varphi=\Phi(r;c,b)$ in a maximal existence interval $J:= [0,R_\infty(c,b))$. It is smooth in $J$, $\Phi(r;c,b)$ and $\Phi'(r;c,b)$ are strictly increasing in $c$ and $-b$. In addition,
\begin{enumerate}[{\rm (I).}]
 \item when $b=0$, $R_\infty = \infty$, $\Phi'(r),\ \Phi''(r)>0$ for $r>0$, \eqref{phi-shape-b=0} holds, and
\begin{equation}\label{limits-Phi'-to-infty}
\Phi'(r;c,0)\to \infty \mbox{\ \ as\ \ }c\to \infty;
\end{equation}

 \item when $b<0$, $R_\infty(c,b)$ satisfies
\begin{equation}\label{R-infty-bounds}
N(c-b)^{-1/\alpha} \leq R_\infty (c,b) \leq N (-b)^{-1/\alpha},
\end{equation}
it is continuously dependent on and strictly decreasing in $c$ with
\begin{equation}\label{limits-R-infty}
R_\infty(c,b)\to N(-b)^{-1/\alpha} \mbox{\ \ as\ \ }c\to 0,\quad R_\infty(c,b)\to 0 \mbox{\ \ as\ \ }c\to \infty.
\end{equation}
In addition, $\Phi'(r),\ \Phi''(r)>0$ in $(0,R_\infty)$, and
     \begin{equation}\label{phi-psi-b<0}
   \Phi( R_\infty-0) <\infty,\quad  \Phi'(R_\infty-0) =   \Phi''(R_\infty-0) =\infty,
     \end{equation}
and $\Phi(R_\infty(c,b)-0)$ is decreasing in $c$.
\end{enumerate}
\end{thm}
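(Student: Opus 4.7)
The plan is to reformulate the second-order ODE \eqref{TS} as an integral equation for $\psi:=\varphi'$ by exploiting the divergence identity
\begin{equation*}
\frac{d}{dr}\left(\frac{r^{N-1}\varphi'}{\sqrt{1+\varphi'^2}}\right)=r^{N-1}H,
\end{equation*}
together with the algebraic solvability $H=\bigl(c/\sqrt{1+\varphi'^2}-b\bigr)^{1/\alpha}$ of the TS relation $(H^{\alpha}+b)\sqrt{1+\varphi'^2}=c$. When $b\le 0<c$ this quantity is smooth and stays in $[(-b)^{1/\alpha},(c-b)^{1/\alpha}]$. Integrating from $0$ using $\psi(0)=0$ yields
\begin{equation*}
\frac{r^{N-1}\psi(r)}{\sqrt{1+\psi^{2}(r)}}=\int_0^r s^{N-1}\left(\frac{c}{\sqrt{1+\psi^{2}(s)}}-b\right)^{1/\alpha}ds,
\end{equation*}
and a Banach fixed-point argument in $C([0,r_0])$ produces a unique local solution, smooth by bootstrapping. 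Extend to the maximal interval $[0,R_\infty)$. Monotonicity of $\Phi$ and $\Phi'$ in $c$ and in $-b$ then follows because the integrand above is pointwise strictly increasing in $c$ and in $-b$ at any fixed $\psi$, which combined with the monotonicity of the initial acceleration $\Phi''(0)=(c-b)^{1/\alpha}/N$ allows a sliding/comparison argument on the scalar equation for $\psi$.

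Next I will insert the pointwise bounds $(-b)^{1/\alpha}\le H\le (c-b)^{1/\alpha}$ into the integral identity to get
\begin{equation*}
(-b)^{1/\alpha}\frac{r}{N}\le \frac{\psi(r)}{\sqrt{1+\psi^{2}(r)}}\le (c-b)^{1/\alpha}\frac{r}{N}.
\end{equation*}
Since the middle quantity lies in $[0,1)$, the upper inequality keeps $\psi$ bounded whenever $r<N(c-b)^{-1/\alpha}$, giving $R_\infty\ge N(c-b)^{-1/\alpha}$; the lower inequality forces $\psi\to\infty$ no later than $r=N(-b)^{-1/\alpha}$, giving the upper bound. When $b=0$ (case I) the lower bound is vacuous, so $R_\infty=\infty$. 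The asymptotic \eqref{phi-shape-b=0} then follows by observing that $\psi\to\infty$ as $r\to\infty$, whence $H\sim(N-1)/r$, and substituting into $c=H^\alpha\sqrt{1+\psi^{2}}$ gives $\psi(r)\sim cr^\alpha/(N-1)^\alpha$, which integrates to the stated formula. The limit \eqref{limits-Phi'-to-infty} is immediate from the strict monotonicity of $\Phi'$ in $c$.

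For case (II) ($b<0$), the sandwich bounds give \eqref{R-infty-bounds} immediately, and continuity plus strict monotonicity of $R_\infty$ in $c$ follow from the strict monotonicity of $\Phi'$ in $c$, since $R_\infty$ is the first $r$ at which $\Phi'$ diverges; sandwiching then produces \eqref{limits-R-infty}. By maximality $\Phi'(R_\infty-0)=\infty$. For $\Phi(R_\infty-0)<\infty$, the upper sandwich bound yields $\psi(r)\le g(r)/\sqrt{1-g(r)^{2}}$ with $g(r):=r(c-b)^{1/\alpha}/N$, and the primitive of the right-hand side has only an integrable inverse-square-root singularity at the endpoint. The divergence $\Phi''(R_\infty-0)=\infty$ then follows from the formula $\varphi''=(1+\varphi'^{2})^{3/2}[H-(N-1)\psi/(r\sqrt{1+\psi^{2}})]$ and the endpoint limits $\psi\to\infty$, $H\to(-b)^{1/\alpha}$, once the bracket is known to be positive. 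Monotonicity of $\Phi(R_\infty(c,b)-0)$ in $c$ is then obtained by combining parameter monotonicity of $\Phi$ with monotonicity of $R_\infty$.

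The main obstacle I expect is verifying the strict positivity of $\Phi''$ throughout $(0,R_\infty)$: the integrated form only delivers $\Phi'>0$ transparently, whereas $\Phi''>0$ is equivalent to $f':=H-(N-1)f/r>0$ with $f:=\psi/\sqrt{1+\psi^{2}}$, and $f'$ could in principle change sign in the interior. I will resolve this by studying the scalar equation for $f$, starting from the explicit initial value $f'(0)=(c-b)^{1/\alpha}/N>0$ and ruling out a first zero of $f'$ through a barrier/maximum-principle argument using the bounds on $H$. This positivity is precisely what also pins the sign of the bracket needed to upgrade $\Phi''(R_\infty-0)$ to $+\infty$ in case (II), so it is the technical keystone on which the rest of the theorem rests.
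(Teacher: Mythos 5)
Your reformulation via the divergence identity $\bigl(r^{N-1}\psi/\sqrt{1+\psi^{2}}\bigr)'=r^{N-1}H$ is precisely the workhorse identity \eqref{eq-zeta} in the paper, and your strict-positivity argument for $\Phi''$ (ruling out a first zero of $f'$ by a first-zero/ODE comparison) matches the paper's Claim~1. Your use of a Banach fixed point on the integral equation in place of the paper's $\varepsilon$-regularization of the singular coefficient $\tfrac{N-1}{r}$ is a legitimate alternative route to local existence and uniqueness. However, several of the remaining steps contain genuine gaps.

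\emph{Finiteness of $\Phi(R_\infty-0)$.} Your proposed bound $\psi(r)\le g(r)/\sqrt{1-g(r)^{2}}$ with $g(r)=r(c-b)^{1/\alpha}/N$ fails precisely where it is needed: $g$ reaches $1$ at $r=N(c-b)^{-1/\alpha}\le R_\infty$, with strict inequality in general, so the bound becomes vacuous (indeed undefined) on a neighborhood of $R_\infty$. The global sandwich is not tight because near $R_\infty$ one has $H\to(-b)^{1/\alpha}$, \emph{not} $(c-b)^{1/\alpha}$. The paper must therefore perform a localized Taylor expansion of $\zeta$ near $R_\infty$, showing $\zeta(r)\le 1-a\rho+O(\rho^{2})$ with $2a=(-b)^{1/\alpha}-(N-1)/R_\infty$, and it must establish $a>0$ by a contradiction that uses the \emph{strict} monotonicity of $R_\infty$ in $c$. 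Your outline does not supply this refinement.

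\emph{Strict monotonicity and continuity of $R_\infty$, and monotonicity of $K(c,b):=\Phi(R_\infty-0)$.} Strict monotonicity of $\Phi'$ in $c$ gives only weak (nonincreasing) dependence of the blow-up time $R_\infty$ on $c$, not strictness, and continuity of $R_\infty(c,b)$ is not automatic. The paper proves strict monotonicity by a sliding/contact-point argument with the graphs of two profiles, and continuity by a separate contradiction argument using the expansion coefficients $a_m$ and $\bar a$. As for $K(c,b)$: $\Phi(r;c,b)$ increases in $c$ pointwise while $R_\infty(c,b)$ decreases in $c$, so the two effects on $K$ compete; one cannot simply ``combine'' them. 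The paper again resorts to a sliding argument, using additionally that the graphs cannot touch at the right endpoint because $\Phi'(R_\infty-0)=+\infty$.

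\emph{The limit $\Phi'(r;c,0)\to\infty$ as $c\to\infty$.} This is not ``immediate from strict monotonicity'': monotone in $c$ only yields a (possibly finite) limit. The paper proves it by assuming $\zeta(r_0;c_m,0)\le 1-\varepsilon$ along a sequence and deriving a contradiction from the integral identity with $c_m\to\infty$.

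\emph{The asymptotics \eqref{phi-shape-b=0}.} Your heuristic $H\sim (N-1)/r\Rightarrow\psi\sim cr^{\alpha}/(N-1)^{\alpha}$ identifies the right answer, but to make it a proof one first needs two-sided power-law bounds $A_1r^{\alpha}\le\Psi(r)\le A_2 r^{\alpha}$ (the paper gets these from the integral identity by comparing on dyadic intervals) and then a further argument with the normalized variable $\xi=\Psi r^{-\alpha}$, because $\xi$ could a priori oscillate without converging. These steps are not supplied in your outline.

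In short: the skeleton and the key identity are right, but the finiteness of $\Phi$ at $R_\infty$, the strict monotonicity/continuity of $R_\infty$, the monotonicity of $K$, the $c\to\infty$ limit of $\Phi'$, and the rigorous version of the $b=0$ asymptotics all require nontrivial additional arguments that are missing or incorrectly claimed to be immediate.
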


The problem \eqref{TS} is equivalent to
\begin{equation}\label{TW1}
\left\{
\begin{array}{l}
\displaystyle \frac{\varphi''}{1+\varphi'^2}+\frac{N-1}{r}\varphi'=
\left(\frac{c}{\sqrt{1+\varphi'^2}}-b\right)^{{1/\alpha}}\sqrt{1+\varphi'^2}, \quad \ r>0,\\
 \varphi(0)=\varphi'(0)=0.
\end{array}
\right.
\end{equation}
Due to the singularity of the term  $\frac{N-1}{r}$ at $r=0$, for any $\varepsilon>0$, we first consider the approximate problem
\begin{equation}\label{TW1-1}
\left\{
\begin{array}{l}
\displaystyle \frac{\varphi''}{1+\varphi'^2}+\frac{N-1}{r+\ve}\varphi'=
\left(\frac{c}{\sqrt{1+\varphi'^2}}-b\right)^{{1/\alpha}}\sqrt{1+\varphi'^2}, \quad \ r\geq 0,\\
 \varphi(0)=\varphi'(0)=0.
\end{array}
\right.
\end{equation}

\begin{lem}\label{lem2.1}
For any $c>0$, the problem \eqref{TW1-1} has a unique solution  $\varphi=\varphi_\varepsilon (r;c,b)$ in a maximal existence interval  $[0, R^\ve_\infty)$ with
\begin{equation}\label{positive-psi-psi'}
\varphi'_\ve (r),\ \varphi''_\ve (r)>0 \quad \text{in}\quad \left(0, R^\ve_\infty\right),
\quad \text{and}\quad \vi_\ve(r),\ \vi'_\ve(r) \to \infty \quad \text{as} \quad r\to R^\ve_\infty.
 \end{equation}
Moreover, when $b=0$ there holds $R^\ve_\infty =\infty$; while, when $b<0$, there holds
\begin{equation}\label{est-R-ve-infty}
R_-:=\left(c-b\right)^{-{1/\alpha}}\leq R^\ve_\infty\leq R_+:= (-b)^{-{1/\alpha}} [N+\ln 2].
\end{equation}
\end{lem}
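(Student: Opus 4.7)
My plan is to reduce \eqref{TW1-1} to a first-order ODE for the angle variable $\theta(r):=\arctan\varphi'(r)$. Under this substitution one has $\varphi'=\tan\theta$, $\sqrt{1+\varphi'^2}=\sec\theta$, and $\varphi''/(1+\varphi'^2)=\theta'$; multiplying the transformed equation by $(r+\ve)^{N-1}\cos\theta$ produces the divergence-form identity
\[
\frac{d}{dr}\bigl[(r+\ve)^{N-1}\sin\theta\bigr] \;=\; (r+\ve)^{N-1}(c\cos\theta-b)^{1/\alpha},\qquad \theta(0)=0.
\]
Local existence and uniqueness of $\varphi_\ve$ on some maximal interval $[0,R_\infty^\ve)$ then follow from the Picard--Lindel\"of theorem, since the regularized coefficient $(N-1)/(r+\ve)$ and the exponent's base $c/\sqrt{1+\varphi'^2}-b$ are smooth and strictly positive at the initial point.

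To establish the monotonicities, I would read off $\varphi''_\ve(0)=(c-b)^{1/\alpha}>0$ directly from \eqref{TW1-1} and integrate the divergence identity (whose right-hand side is strictly positive on $[0,\pi/2)$ since $b\le 0<c$) to conclude that $\sin\theta(r)>0$, hence $\varphi'_\ve(r)>0$, for every $r\in(0,R_\infty^\ve)$. The strict positivity of $\varphi''_\ve$ would then be proved by contradiction using the first-order form $v'=(1+v^2)F(v,r)$ with $v=\varphi'_\ve$ and $F(v,r)=(c/\sqrt{1+v^2}-b)^{1/\alpha}\sqrt{1+v^2}-(N-1)v/(r+\ve)$: if $v'$ first vanished at some $r_0>0$, differentiating and using $v'(r_0)=0$ would give $v''(r_0)=(1+v^2(r_0))(N-1)v(r_0)/(r_0+\ve)^2>0$, contradicting the fact that $r_0$ would have to be a one-sided maximum of $v'$.

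For the endpoint behavior, standard extendability implies that $R_\infty^\ve<\infty$ forces $v=\varphi'_\ve\to\infty$, i.e.\ $\theta\to\pi/2$. In the case $b<0$, integrating the divergence identity and using the two-sided source bound $(-b)^{1/\alpha}\le(c\cos\theta-b)^{1/\alpha}\le(c-b)^{1/\alpha}$ yields
\[
\frac{(-b)^{1/\alpha}}{N}\Bigl[(r+\ve)-\frac{\ve^N}{(r+\ve)^{N-1}}\Bigr] \;\le\; \sin\theta(r) \;\le\; (c-b)^{1/\alpha}\,r.
\]
Letting $\sin\theta\to 1$ in the right inequality gives $R_\infty^\ve\ge(c-b)^{-1/\alpha}=R_-$; the left inequality, together with the elementary bound $\ve^N/(r+\ve)^{N-1}\le\ve$, forces $R_\infty^\ve\le N(-b)^{-1/\alpha}\le R_+$.

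The main obstacle is the case $b=0$, where the lower source bound vanishes and the preceding upper estimate collapses. My plan is to show directly that $\sin\theta$ cannot approach $1$ at any finite $r$. Rearranging the identity yields $(\sin\theta)'=c^{1/\alpha}\cos^{1/\alpha}\theta-(N-1)\sin\theta/(r+\ve)$; if one had $\sin\theta\to 1$ as $r\to R_\infty^\ve<\infty$, the right-hand side would tend to $-(N-1)/(R_\infty^\ve+\ve)<0$, while the left-hand side $(\sin\theta)'=\cos\theta\cdot\theta'$ is strictly positive throughout by the monotonicity step --- a contradiction. Hence $\theta$ stays strictly below $\pi/2$ on every compact subinterval, $v=\tan\theta$ remains locally bounded, and the solution extends to $[0,\infty)$, so $R_\infty^\ve=\infty$ when $b=0$. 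The crux of the whole lemma is precisely this balance between the positive driving term $(c\cos\theta-b)^{1/\alpha}$ and the negative drift $-(N-1)v/(r+\ve)$, which the divergence identity renders quantitative at both ends of the existence interval.
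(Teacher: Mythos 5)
Your reformulation via $\theta := \arctan \varphi'_\ve$ and the divergence-form identity $\frac{d}{dr}\bigl[(r+\ve)^{N-1}\sin\theta\bigr] = (r+\ve)^{N-1}(c\cos\theta - b)^{1/\alpha}$ is close in spirit to the paper's argument: it is the regularized version of the equation \eqref{eq-zeta} that the paper introduces only in Step~5 of the proof of Theorem~\ref{thm:TSb<0}, whereas Lemma~\ref{lem2.1} itself is argued in the variable $\psi_\ve := \varphi'_\ve$. Your positivity argument (positivity of $\varphi'_\ve$ by integrating the identity, positivity of $\varphi''_\ve$ by a first-vanishing contradiction) and the $b=0$ contradiction argument coincide with the paper's. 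The one genuine simplification is the upper bound for $b<0$: since $\sin\theta$ is a priori bounded by $1$, the integrated inequality $\sin\theta(r) \geq \frac{(-b)^{1/\alpha}}{N}\,r$ forces $R^\ve_\infty \leq N(-b)^{-1/\alpha}$ in a single step, a tighter bound than $R_+ = (-b)^{-1/\alpha}[N+\ln 2]$; the paper, working with the unbounded quantity $\psi_\ve$, needs a two-stage estimate (first $\psi_\ve\geq 1$ for $r\geq N(-b)^{-1/\alpha}$, then a logarithmic blowup bound).

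There is, however, a gap for $b=0$. The lemma asserts $\varphi'_\ve(r)\to\infty$ as $r\to R^\ve_\infty$, and when $R^\ve_\infty=\infty$ this does not follow from nonextendability. Your contradiction argument rules out $\sin\theta\to 1$ at a \emph{finite} $r$ (yielding $R^\ve_\infty=\infty$), but it does not exclude the scenario in which $\sin\theta$, and hence $\varphi'_\ve$, tends to a finite limit as $r\to\infty$; that is precisely the possibility (c) that the paper's Claim~2 eliminates before splitting into the cases $b=0$ and $b<0$. The fix in your variables is one line: if $\sin\theta\to\bar s<1$ as $r\to\infty$, the right side of $(\sin\theta)' = c^{1/\alpha}\cos^{1/\alpha}\theta - (N-1)\sin\theta/(r+\ve)$ converges to $c^{1/\alpha}(1-\bar s^{\,2})^{1/(2\alpha)}>0$, so $\sin\theta$ would grow without bound and exceed $1$, a contradiction. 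You should insert this step to complete the proof of \eqref{positive-psi-psi'} in the case $b=0$.
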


\begin{proof}
 Set  $\psi_\ve :=\vi'_\ve$,  then \eqref{TW1-1} is converted to
\begin{equation}\label{TW1-2}
\left\{
\begin{array}{l}
\displaystyle \frac{\psi_\ve'}{1+\psi_\ve^2}=F\left(r,\psi_\ve,\ve,c,b\right):=
\left(\frac{c}{\sqrt{1+\psi_\ve^2}}-b\right)^{{1/\alpha}}\sqrt{1+\psi_\ve^2}
-\frac{N-1}{r+\ve}\psi_\ve, \qquad r\geq 0,\\
\psi_\ve(0)=0.
\end{array}
\right.
\end{equation}
Denote its maximal existence interval by $[0,R^\ve_\infty)$.
We first prove the following claim.

\medskip
\noindent
{\bf Claim 1.} $\psi_\ve(r)>0,\ \psi_\ve'(r)>0$ in $J^\ve := (0,R^\ve_\infty)$.
\medskip

In fact, due to $\psi_\ve'(0)=\left(c-b\right)^{{1/\alpha}}>0$, we have $\psi_\ve(r)>0$ for $0<r\ll 1$. If there exists $r_1>0$ such that $\psi_\ve(r_1)=0$ and $\psi_\ve(r)>0$ in $(0,r_1)$, then $\psi_\ve'(r_1)\leq 0$. But this contradicts the equation in (\ref{TW1-2}). Hence $\psi_\ve(r)>0$ in $J^\ve$. Next, by differentiating (\ref{TW1-2}) in $r$ we have
\begin{equation}  \label{psi}
  \frac{\psi_\ve''\left(1+\psi_\ve^2\right)-2\psi_\ve (\psi_\ve')^2}{\left(1+\psi_\ve^2\right)^2}
  = A(r,\psi_\ve,\ve,c,b),\quad r\geq 0,
  \end{equation}
where
$$
 A(r,\psi_\ve,\ve,c,b):= \frac{1}{\alpha}\left(\frac{c}{\sqrt{1+\psi_\ve^2}}-b\right)^{\frac{1-\alpha}{\alpha}}
  \left( \frac{c(\alpha-1)\psi_\ve\psi_\ve'}{1+\psi_\ve^2}   - \frac{b\alpha\psi_\ve\psi_\ve'}{\sqrt{1+\psi_\ve^2}} \right) -(N-1)\frac{\psi_\ve'(r+\ve)-\psi_\ve}{\left(r+\ve\right)^2}.
$$
Since  $\psi_\ve'(0)>0$, we have  $\psi_\ve'(r)>0$ for  $0<r\ll 1$ by continuity. Assume $\psi_\ve'(r)>0$ in $[0,r_2)$ and $\psi_\ve'(r_2)=0$. Then $\psi_\ve''(r_2)\leq 0$. This, however, contradicts the equation \eqref{psi} at $r_2$. This proves Claim 1.

\medskip

By Claim 1 we have three possibilities:

\begin{enumerate}[(a).]
\item $R^\ve_\infty <\infty$ and $\psi_\ve(r)\to \infty$ as $r\to R^\ve_\infty-0$;

\item $R^\ve_\infty = \infty$ and $\psi_\ve(r)\to \infty$ as $r\to \infty$;

\item $R^\ve_\infty = \infty$ and $\psi_\ve(r)\to \bar{\psi}_\ve>0$ as $r\to \infty$.
\end{enumerate}
On these possibilities we have the following claim

\medskip
\noindent
{\bf Claim 2.} (a) and \eqref{est-R-ve-infty} hold when $b<0$; (b) holds when $b=0$; (c) does not holds for any $b$.
\medskip

By contradiction we assume (c) holds. Then by the equation in \eqref{TW1-2} we have
$$
\frac{\psi_\ve'}{1+\psi_\ve^2}=F\left(r,\psi_\ve,\ve,c,b\right) \to
\left(\frac{c}{\sqrt{1+\bar{\psi}_\ve^2}}-b\right)^{{1/\alpha}}\sqrt{1+\bar{\psi}_\ve^2}
>0 \mbox{\ \ as\ \ }r\to \infty,
$$
since $c>0\geq b$. This contradicts $\psi_\ve(r)\to \bar{\psi}_\ve$.

Next we consider the case $b=0$. If $R^\ve_\infty<\infty$, then $\psi_\ve(r) \to \infty$ and $r\to R^\ve_\infty-0$, and so by the equation \eqref{TW1-2} we have
$$
\frac{\psi_\ve'}{\psi_\ve(1+\psi_\ve^2)} = c^{{1/\alpha}} (1+\psi_\ve^2)^{\frac{\alpha -1}{2\alpha}} \frac{1}{\psi_\ve}- \frac{N-1}{r+\varepsilon}\to -\frac{N-1}{R^\ve_\infty+\ve}<0 \mbox{\ \ as\ \ }r\to R^\ve_\infty-0,
$$
contradicts $\psi_\ve'(r)>0$. Therefore $R^\ve_\infty =\infty$.

Finally we consider the case $b<0$. We show that $R^\ve_\infty<\infty$ and so (a) holds in this case. Otherwise, if $R^\ve_\infty =\infty$, then for
$r\geq R_1:= 2(N-1)(-b)^{-{1/\alpha}}$, by the equation \eqref{TW1-2} we have
$$
\frac{\psi_\ve'}{1+\psi_\ve^2} > (-b)^{{1/\alpha}} \sqrt{1+\psi_\ve^2} -\frac{N-1}{r+\varepsilon}\psi_\ve \geq \left[ (-b)^{{1/\alpha}} -\frac{N-1}{r} \right] \psi_\ve \geq \frac{(-b)^{{1/\alpha}}}{2} \psi_\ve.
$$
Integrating it over $[R_1, r)$ we have
$$
\ln \frac{\psi_\ve(r)}{\sqrt{1+\psi_\ve^2(r)}} > \ln \frac{\psi_\ve(R_1)}{\sqrt{1+\psi_\ve^2(R_1)}} + \frac{(-b)^{{1/\alpha}}}{2} (r-R_1).
$$
This implies that
$$
\psi_\ve(r)\to \infty \mbox{\ \ as\ \ }r\to R_2,
$$
for some
\begin{equation}\label{def-R2}
R_2 \leq R_1 - 2(-b)^{-{1/\alpha}} \ln \frac{\psi_\ve(R_1)}{\sqrt{1+\psi_\ve^2(R_1)}}.
\end{equation}
This contradicts $R^\ve_\infty =\infty$. Furthermore, we can derive the upper and lower estimates for $R^\ve_\infty$. By \eqref{TW1-2} we have
$$
\frac{\psi_\ve'}{1+\psi_\ve^2} \leq (c-b)^{{1/\alpha}} \sqrt{1+\psi_\ve^2}.
$$
Integrating it over $[0,R^\ve_\infty)$ we obtain $R^\ve_\infty \geq R_- = (c-b)^{-{1/\alpha}}$. On the other hand, by the positivity of $\psi_\ve,\ \psi_\ve'$ and the equation \eqref{TW1-2} we have
$$
\psi_\ve'+\frac{N-1}{r}\psi_\ve  > \frac{\psi_\ve'}{1+\psi_\ve^2}+\frac{N-1}{r+\ve}\psi_\ve\geq (-b)^{{1/\alpha}},\quad r>0.
$$
Hence, $\left(r^{N-1}\psi_\ve\right)'> (-b)^{{1/\alpha}} r^{N-1}$ for $r>0$.  Integrating it over  $[0,r]$ we obtain $\psi_\ve > \frac{(-b)^{{1/\alpha}} }{N} r$. Thus
$$
\psi_\ve(r)\geq 1, \quad r\geq  R_0 := N (-b)^{-{1/\alpha}}.
$$
If $R^\ve_\infty\leq R_0$, then the upper estimate is already obtained. In what follows we assume $R^\ve_\infty >R_0$. Then in $[R_0, R^\ve_\infty)$, in a similar way as deriving \eqref{def-R2} we have
$$
R^\ve_\infty \leq R_0 - 2(-b)^{-{1/\alpha}} \ln \frac{\psi_\ve(R_0)}{\sqrt{1+\psi_\ve^2(R_0)}} \leq (-b)^{-{1/\alpha}} [N+\ln 2] =R_+.
$$
This proves Claim 2, and completes the proof of the lemma.
\end{proof}

We will take limit as $\ve$ going to $0$ for $\psi_\ve (r;c, b)$ to obtain the solution of (\ref{TW1}) rather than to \eqref{TW1-1}. For this purpose, we study the monotonicity of $\vi_\ve$  and $\psi_\ve$ in $\ve$, and their a priori estimates.

\begin{lem}\label{lem2.2}
Both $\psi_\ve$ and $\vi_\ve $ are strictly increasing in $\ve>0$.
\end{lem}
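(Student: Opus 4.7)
The plan is to reduce the claim to a standard linear ODE comparison argument, using the fact that the right-hand side of \eqref{TW1-2} is pointwise strictly increasing in $\varepsilon$ on the region $\psi>0$. Writing \eqref{TW1-2} in the normal form $\psi_\ve'=F(r,\psi_\ve,\ve)$ with
$$F(r,\psi,\ve):=(1+\psi^2)\left[\left(\frac{c}{\sqrt{1+\psi^2}}-b\right)^{{1/\alpha}}\sqrt{1+\psi^2}-\frac{N-1}{r+\ve}\psi\right],$$
a direct differentiation gives $\partial_\ve F(r,\psi,\ve)=\frac{(N-1)(1+\psi^2)\psi}{(r+\ve)^2}$, which is strictly positive for all $\psi>0$. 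This is the key monotonicity that drives the whole argument.

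Fix $0<\ve_1<\ve_2$ and set $w(r):=\psi_{\ve_2}(r)-\psi_{\ve_1}(r)$ on the common existence interval $[0,R_*)$ where $R_*:=\min\{R^{\ve_1}_\infty,R^{\ve_2}_\infty\}$. I would split
$$w'=\bigl[F(r,\psi_{\ve_2},\ve_2)-F(r,\psi_{\ve_1},\ve_2)\bigr]+\bigl[F(r,\psi_{\ve_1},\ve_2)-F(r,\psi_{\ve_1},\ve_1)\bigr]=a(r)w+h(r),$$
where $a(r)$ comes from the mean value theorem in the $\psi$-variable and is continuous on $[0,R_*)$ (the $\ve$-regularization makes $\partial_\psi F$ bounded near $r=0$), while $h(r)\ge0$ with $h(r)>0$ on $(0,R_*)$ because $\psi_{\ve_1}(r)>0$ there by \eqref{positive-psi-psi'} and $F$ is strictly increasing in $\ve$ on $\psi>0$. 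Since $w(0)=0$, variation of parameters yields $w(r)=\int_0^r e^{\int_s^r a(\tau)d\tau}h(s)\,ds>0$ for all $r\in(0,R_*)$. This gives $\psi_{\ve_2}>\psi_{\ve_1}$ on $(0,R_*)$, and integrating in $r$ gives $\vi_{\ve_2}>\vi_{\ve_1}$ on the same interval.

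To finish, I would extend the comparison to the full existence interval of $\psi_{\ve_2}$. When $b=0$, both intervals are $[0,\infty)$ by Lemma~\ref{lem2.1}, so nothing further is needed. When $b<0$, we have $\psi_{\ve_1}(r)\to\infty$ as $r\to R^{\ve_1}_\infty$ by \eqref{positive-psi-psi'}; since $\psi_{\ve_2}>\psi_{\ve_1}$ on their overlap, $\psi_{\ve_2}$ must also blow up no later than $R^{\ve_1}_\infty$, forcing $R^{\ve_2}_\infty\le R^{\ve_1}_\infty=R_*$, so the strict inequality $\psi_{\ve_2}>\psi_{\ve_1}$ and $\vi_{\ve_2}>\vi_{\ve_1}$ hold on the whole existence interval $(0,R^{\ve_2}_\infty)$ of $\psi_{\ve_2}$.

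There is no real obstacle here: the argument is a textbook Gronwall/variation-of-parameters comparison. The only point needing a moment's care is that $a(r)$ is well behaved at the origin, which is guaranteed precisely because the regularization replaces the singular $\tfrac{N-1}{r}$ by $\tfrac{N-1}{r+\ve}$, making $F$ and $\partial_\psi F$ smooth up to $r=0$ for every $\ve>0$. If desired, the same conclusion can instead be phrased as: suppose towards a contradiction that there is a smallest $r_0>0$ with $w(r_0)=0$; then $w'(r_0)=h(r_0)>0$, contradicting $w>0$ in a left neighborhood of $r_0$ being forbidden by $w(0)=0$ — but the integrating factor version above is cleaner.
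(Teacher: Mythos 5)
Your proposal is correct and takes essentially the same route as the paper, which simply appeals to the comparison principle for ODEs applied to \eqref{TW1-2}. Your variation-of-parameters computation makes that appeal concrete, and in particular it automatically handles the degeneracy at $r=0$ (where $w(0)=w'(0)=0$ since $h(0)=0$), which is likely why the paper's one-line proof also references the differentiated equation \eqref{psi}.
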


\begin{proof}
For any $0<\ve_1<\ve_2$, by the comparison principle for ODE, we can derive from the equation \eqref{TW1-2} and \eqref{psi} that
$$
0<\psi_{\ve_1}(r)<\psi_{\ve_2}(r), \quad r>0.
$$
Therefore, $\vi_{\ve_1}(r)<\vi_{\ve_2}(r)$ for $r>0$.
\end{proof}

From this lemma we see that, when $b<0$, $R^\ve_\infty \in [R_-, R_+]$ and it is decreasing in $\ve$, and so
\begin{equation}\label{def-R-infty}
R_- \leq R_\infty (c,b):=\lim_{\ve\to 0}R^\ve_\infty \leq R_+.
\end{equation}
When $b=0$, we use the same notation $R_\infty (c,b):= \lim\limits_{\ve\to 0} R^\ve_\infty$, then $R_\infty (c,0)=\infty$.

Next we give some {\it uniform} interior estimates for $\vi$.

\begin{lem}\label{lem2.3}
For any given $R\in(0,R_\infty(c,b))$, there exists $\ve_0=\ve_0(R)$ such that $[0,R]\subset[0,R^\ve_\infty)$ for all $0<\ve\leq\ve_0$. In addition,
\begin{enumerate}[{\rm (1)}]
  \item there exists $C_1=C_1(R)$ independent of $\ve$ such that
  \begin{equation}\label{phi-up}
  \|\vi_\ve (r;c,b)\|_{C^1([0,R])}\leq C_1(R);
  \end{equation}
  \item  for any given $\delta\in (0,R)$ and any positive integer $m\geq 2$, there exists $C_m=C_m(\delta,R)$ independent of $\ve$ such that
\begin{equation}\label{est-psi-k}
  \|\vi^{(m)}_\ve (r;c,b)\|_{C([\delta,R])}\leq C_m(\delta,R).
  \end{equation}
\end{enumerate}
\end{lem}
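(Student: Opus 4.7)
The plan is to handle the three parts of the lemma in order, exploiting the monotonicity of $\psi_\ve$ in both $r$ (Lemma \ref{lem2.1}) and $\ve$ (Lemma \ref{lem2.2}), together with the fact that the only ``singular'' coefficient in the ODE, $\tfrac{N-1}{r+\ve}$, becomes harmless once $r$ is bounded away from $0$.

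For (1), I would first observe that Lemma \ref{lem2.2} implies $R^\ve_\infty$ is nonincreasing in $\ve$: if $\ve_1 < \ve_2$, then $\psi_{\ve_1}(r) < \psi_{\ve_2}(r)$ on their common existence interval, so $\psi_{\ve_2}$ cannot survive past the blow-up of $\psi_{\ve_1}$, giving $R^{\ve_2}_\infty \leq R^{\ve_1}_\infty$. Since $R_\infty(c,b) = \lim_{\ve \to 0^+} R^\ve_\infty = \sup_{\ve > 0} R^\ve_\infty$, any $R < R_\infty(c,b)$ admits some $\ve_0 = \ve_0(R) > 0$ with $R^{\ve_0}_\infty > R$, and monotonicity then yields $R^\ve_\infty \geq R^{\ve_0}_\infty > R$ for all $\ve \in (0, \ve_0]$. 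For (2), fix this $\ve_0$; by Lemmas \ref{lem2.1} and \ref{lem2.2}, on $[0, R]$ one has
\[
0 \leq \vi_\ve'(r) = \psi_\ve(r) \leq \psi_{\ve_0}(r) \leq \psi_{\ve_0}(R) =: M(R),
\]
where $M(R) < \infty$ because $R < R^{\ve_0}_\infty$. Integrating in $r$ gives $|\vi_\ve(r)| \leq M(R)\, R$, so $\|\vi_\ve\|_{C^1([0,R])} \leq (1+R) M(R) =: C_1(R)$ uniformly in $\ve \in (0, \ve_0]$.

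For (3), I would rewrite the equation in \eqref{TW1-1} as
\[
\vi_\ve''(r) = (1 + \psi_\ve^2)\left[\left(\frac{c}{\sqrt{1+\psi_\ve^2}} - b\right)^{1/\alpha}\!\sqrt{1+\psi_\ve^2} - \frac{N-1}{r+\ve}\psi_\ve\right].
\]
On $[\delta, R]$ the factor $(r+\ve)^{-1} \leq \delta^{-1}$ uniformly in $\ve$, so combining with the $C^1$ bound of Part (2) yields $\|\vi_\ve''\|_{C([\delta, R])} \leq C_2(\delta, R)$. For $m \geq 3$ I would argue by induction, differentiating the ODE $m-2$ times: each $\psi_\ve^{(j)}$ becomes a polynomial in $\psi_\ve, \psi_\ve', \dots, \psi_\ve^{(j-1)}$, nonnegative powers of $(r+\ve)^{-1}$, and smooth functions of $\psi_\ve$ (coming from the $(\,\cdot\,)^{1/\alpha}$ and $\sqrt{1 + \,\cdot\,}$ terms). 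On $[\delta, R]$ each such term is already controlled by the inductive hypothesis and by $\delta^{-1}$, producing the required uniform bound $C_m(\delta, R)$.

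The main obstacle is almost entirely bookkeeping rather than conceptual. Once the two monotonicity lemmas reduce Parts (1) and (2) to simple comparisons with a single dominating solution $\psi_{\ve_0}$, Part (3) follows because the coefficient $\tfrac{N-1}{r+\ve}$, which would be singular at $r=0$ as $\ve \to 0$, is harmless on the interior set $[\delta, R]$; all higher derivatives are produced by differentiating a smooth right-hand side with uniformly bounded coefficients. No new analytic ingredient beyond the two preceding lemmas should be needed.
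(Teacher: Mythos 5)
Your proposal is correct and follows essentially the same route as the paper: use the monotonicity of $R^\ve_\infty$ in $\ve$ (from Lemma \ref{lem2.2}) to find $\ve_0$, dominate $\psi_\ve$ on $[0,R]$ by the single solution $\psi_{\ve_0}(R)$ via Lemmas \ref{lem2.1}--\ref{lem2.2} for the $C^1$ bound, and then bootstrap through the ODE on $[\delta,R]$ where $(r+\ve)^{-1}\leq\delta^{-1}$ is uniformly bounded. No meaningful difference from the paper's argument.
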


\begin{proof}
By the definition of $R_\infty$ in (\ref{def-R-infty}), there exits $\ve_0=\ve_0(R)$ such that when $0<\ve\leq\ve_0$, we have $R^\ve_\infty>R$.

(1). For any $\ve\in(0,\ve_0]$,  by Lemmas \ref{lem2.1} and \ref{lem2.2} we have
\begin{equation}\label{upper-est-psi}
  0=\psi_\ve (0) \leq \psi_\ve(r)\leq \psi_\ve(R)\leq\psi_{\ve_0}(R), \quad r\in [0,R].
\end{equation}
This implies that (\ref{phi-up}) holds.

(2). Since
$$
\frac{1}{R+\ve_0}\leq \frac{1}{r+\ve}\leq \frac{1}{\delta}, \quad r\in [\delta,R], \ \ve\in(0,\ve_0],
$$
we see that all the coefficients involving the terms $\frac{1}{r+\ve}$ in the equations of $\psi^{(m)}_\ve\ (m\geq 1)$ are bounded (uniformly in $\ve$). By iteration we obtain \eqref{est-psi-k} for all $m\geq 2$.
\end{proof}

\medskip
\noindent
{\it Proof of Theorem \ref{thm:TSb<0} {\rm (II)}}.
We prove the theorem in case $b<0$. The proof is a little complicated and is divided into several steps.

\medskip
\noindent
{\bf Step 1. Construction and monotonicity of $\Phi$}. Let $R,\ \varepsilon_0$ be that as in Lemma \ref{lem2.3}. For any sequence $\{\ve_i\}\subset(0,\ve_0]$ with $\ve_i\to 0\ (i\to\infty)$, by (\ref{phi-up}) there exist a subsequence of $\{\ve_i\}$ (denote it again by  $\{\ve_i\}$) and a function $\Phi\in C\left([0,R]\right)$ such that
\begin{equation}
\|\vi_{\ve_i}(r;c,b)-\Phi(r)\|_{C\left([0,R]\right)}\to 0 {\ \ as\ \ } i\to\infty.
\end{equation}
Moreover, for any positive integer $m$, by \eqref{est-psi-k}, there exists a subsequence of $\{\ve_i\}$ (denote it again by  $\{\ve_i\}$) such that
$$
\|\vi_{\ve_i}(r;c,b)-\Phi(r)\|_{C^m \left([\delta,R]\right)}\to 0 {\ \ as\ \ } i\to\infty.
$$

Using Cantor's diagonal argument we conclude that there exists a subsequence of $\{\ve_i\}$ (denote it again by $\{\ve_i\}$) such that
\begin{equation}\label{conv-to-Phi}
\vi_{\ve_i}(r;c,b) -\Phi(r) \to 0 {\ \ as\ \ } i\to\infty,\quad \mbox{ in the topology of } C_{loc}([0,R_\infty)) \mbox{ and } C^m_{loc}((0,R_\infty)),
\end{equation}
for $m\geq 1$. Hence $\Phi\in C\left([0,R_\infty)\right) \cap C^\infty ((0,R_\infty))$. It solves the equation in \eqref{TW1} and $\Phi(0)=0$.
Moreover, by Lemma \ref{lem2.1} we have
$$
\Phi'(r)\geq 0,\ \ \Phi''(r)\geq 0, \quad  r\in (0,R_\infty).
$$
Arguing as in the proof of Claim 1 in Lemma \ref{lem2.1} we even have
\begin{equation}\label{Psi>0}
\Phi'(r)>0,\ \ \Phi''(r)>0, \quad  r\in (0,R_\infty).
\end{equation}

We remark that the interval $[0,R_\infty)$, though it is obtained by taking limit for $[0,R^\ve_\infty)$, is the maximal existence interval of $\Phi$. In fact, if $\Phi$ is well-defined in $[0, R_\infty + 2\delta]$ for some $\delta>0$, then by the continuously dependence of the solution $\psi_\ve$ of \eqref{TW1-2} on its coefficient $\frac{N-1}{r+\ve}$, $\psi_\ve$ can be extended to  $[0,R_\infty +\delta]$, and so $R^\ve_\infty \geq R_\infty +\delta$ for all small $\varepsilon>0$, contradicts the definition of $R_\infty$.

For simplicity, in what follows we always denote
$$
\Psi(r):= \Phi'(r).
$$

\medskip
\noindent
{\bf Step 2. To show $\Phi'(0)=0$.} For any $0<\delta<r<R<R_\infty$, by \eqref{upper-est-psi} we have
$$
\int^r_\delta \psi_\ve (r)dr < \vi_\ve (r) = \vi_\ve (r)-\vi_\ve (0)= \int^r_\delta \psi_\ve (r)dr +\int^\delta_0\psi_\ve (r)dr \leq  \int^r_\delta \psi_\ve (r)dr + \psi_{\ve_0}(R) \delta.
$$
Taking $\ve=\ve_i$ and using the limit in \eqref{conv-to-Phi} we have
\begin{equation}\label{<Phi<}
\int^r_\delta \Psi (r)dr \leq \Phi(r) \leq \int^r_\delta \Psi(r)dr + \psi_{\ve_0}(R) \delta.
\end{equation}
Note that $\Phi'(0)$ is not well-defined till now, but
$$
0< \Psi (r)\leq \psi_{\ve_0}(R),\quad 0<r<R
$$
by \eqref{upper-est-psi}. Taking limits as $\delta\to 0$ in \eqref{<Phi<} we have
$$
\Phi(r) =\int^r_0 \Psi (r)dr \leq \psi_{\ve_0}(R) r,\quad 0<r\leq R.
$$
This implies that
$$
0\leq \limsup\limits_{r\to 0} \frac{\Phi(r)}{r} \leq \psi_{\ve_0} (R).
$$
Since $\psi_{\ve_0}(R)\to 0$ as $R\to 0$ we actually obtain $\Phi'(0) = 0$.
Therefore, $\Phi\in C^1\left([0,R_\infty)\right)$, and so it is a solution of \eqref{TW1}.

\medskip
\noindent
{\bf Step 3. To show $\Phi\in C^\infty ([0, R_\infty))$.} Using the equation \eqref{eq0}, we see that $\tilde{u}(x) :=\Phi(|x|)$ is a solution of
\begin{equation}
c=\left[\left(\delta_{ij}-\frac{D_iuD_ju}{1+|Du|^2}\right)D_{ij} u\right]^\alpha
\left(1+|Du|^2\right)^{\frac{1-\alpha}{2}}+b\sqrt{1+|Du|^2},\quad 0<|x|<R_\infty,
\end{equation}
or, equivalently,
$$
\left(\delta_{ij}-\frac{D_iuD_ju}{1+|Du|^2}\right)D_{ij} u =
\left(c-b\sqrt{1+|Du|^2} \right)^{1/\alpha} \left(1+|Du|^2\right)^{(\alpha-1)/\alpha},\quad 0<|x|<R_\infty.
$$
For any $R\in (0,R_\infty)$, the boundedness of $\Phi'$ in $[0,R]$ implies that $D\tilde{u}$ is bounded in $B_R(0)$. Then using the standard regularity theory for elliptic equations we have $\tilde{u}\in C^\infty \left(\overline{B_R(0)} \right)$. This implies that $\Phi(r)\in C^\infty\left([0,R_\infty)\right)$.

\medskip
\noindent
{\bf Step 4. Uniqueness of $\Phi$.} The uniqueness of $\Phi$ is equivalent to the uniqueness of the solution of the following problem
\begin{equation}\label{psi-TW}
\left\{
\begin{array}{l}
\displaystyle \frac{\psi'}{1+\psi^2}+\frac{N-1}{r}\psi=
\left(\frac{c}{\sqrt{1+\psi^2}}-b\right)^{{1/\alpha}}\sqrt{1+\psi^2}, \quad \ r>0,\\
 \psi(0)=0.
\end{array}
\right.
\end{equation}
Assume by contradiction that \eqref{psi-TW} has two different solutions $\psi_1(r)$ and $\psi_2(r)$, defined in $[0,R_1)$ and $[0,R_2)$, respectively.
By the uniqueness for the solution of initial value problem for the first order ODE, $\psi_1(r)\neq\psi_2(r)$ for all $0<r<\tilde{R}:= \min\{R_1,R_2\}$. Assume, without lose of generality, $\psi_2(r) > \psi_1(r)$ in $[0,\tilde{R})$.
In a similar way as proving Claim 1 in Lemma \ref{lem2.1} one can see that
$\psi'_i(r)>0\ (i=1,2)$. Hence, for any given $R\in (0,\tilde{R})$, there exists
$M_1 = M_1(\psi_1,\psi_2,R)$ such that
\begin{equation}\label{bound-psi-R}
0\leq \psi_i (r) \leq M_1,\quad r\in [0,R],\ i=1,2.
\end{equation}
Set $\eta(r) := \psi_2(r)-\psi_1(r)>0$, then $\eta$ solves
$$
\eta'=p_1(\psi_1,\psi_2)\eta+\frac{N-1}{r}\left[\psi_1(1+\psi^2_1)-\psi_2(1+\psi^2_2)\right] \leq p_1(\psi_1,\psi_2) \eta, \quad 0<r\leq R,
$$
where $p_1(\psi_1,\psi_2)\leq P$ when $r\in [0,R]$, for some $P>0$. Integrating the inequality over $[0,r]$ for any $r\in [\delta,R]$, and then taking limit as $\delta\to 0$ by using the fact $\eta(0)=0$ we obtain $\eta\equiv 0$ in $[0,R]$, a contradiction.

\medskip
\noindent
{\bf Step 5. The lower and upper bounds of $R_\infty=R_\infty (c,b)$.} Denote
$$
\zeta :=\frac{\Psi}{\sqrt{1+\Psi^2}}\in [0,1),
$$
then
\begin{equation}\label{eq-zeta}
(\zeta r^{N-1})' 
= \left(c \sqrt{1-\zeta^2} -b\right)^{{1/\alpha}} r^{N-1}.
\end{equation}
Integrating it over $[0,r]$ we have
\begin{equation}\label{rough-bounds-R-infty}
(c-b)^{1/\alpha}\frac{r}{N}\geq \zeta \geq (-b)^{1/\alpha}\frac{r}{N}.
\end{equation}
Since $\zeta(r)\to 1$ as $r\to R_\infty(c,b)$, we conclude that $R_\infty(c,b)$ has lower and upper bounds as in \eqref{R-infty-bounds}.

\medskip
\noindent
{\bf Step 6. To show $\Phi'(r),\ \Phi''(r)\to \infty$ as $r\to R_\infty(c,b)-0$.}
Since $\Phi''(r)>0$, if $\Phi'(r)\not\to \infty$, then $\Phi' (r)\to M_0\ (r\to R_\infty-0)$ for some $M_0 <\infty$. So $(r,\Psi(r))\to (R_\infty, M_0)$ as $r\to R_\infty-0$. This implies that $\Psi$ can be extended a little bit beyond $[0,R_\infty]$, so does $\Phi$, contradicts the definition of $R_\infty$.

We now prove $\Psi'(r)=\Phi''(r)\to \infty\ (r\to R_\infty-0)$. Assume by contradiction that there exist $M_1>0$ and a sequence $\{r'_n\}$ increasing to $R_\infty$ such that
$$
\Psi'(r'_n)\leq M_1.
$$
Since $\Psi (r)\to \infty$ as $r\to R_\infty -0$, $\Psi'(r) $ is unbounded in $[\frac12 R_\infty, R_\infty)$. Therefore, there exists a sequence $\{r_n\}$ increasing to $R_\infty$ such that
$$
\Psi (r_n)\to \infty\ (n\to \infty),\quad \Psi'(r_n)\leq M_1,\quad \Psi''(r_n)=0.
$$
Differentiating the equation \eqref{TW1} twice we have
\begin{eqnarray*}
 \frac{\Psi''} {1+ \Psi^2} & = &
  \frac{2 \Psi (\Psi')^2}{\left(1+ \Psi^2\right)^2} + \frac{1}{\alpha}\left(\frac{c}{\sqrt{1+ \Psi^2}}-b\right)^{\frac{1-\alpha}{\alpha}}
  \left( \frac{c(\alpha-1)\Psi \Psi'}{1+ \Psi^2} - \frac{b\alpha \Psi \Psi'}{\sqrt{1+ \Psi^2}} \right) \\
& & -(N-1)\frac{\Psi' r- \Psi}{r^2}.
\end{eqnarray*}
Taking $r=r_n$ and letting $n\to \infty$ we see that the left hand side of the equation tends to $0$ and right hand side tends to $\infty$, a contradiction.

\medskip
\noindent
{\bf Step 7. To show $\Psi(r;c,b):= \Phi'(r;c,b)$ is strictly increasing in $c$ and $-b$.} By the equation \eqref{psi-TW}, it is easily seen that $N \Psi(0)=(c-b)^{{1/\alpha}}$. Hence $\Psi(0;c,b)$ is strictly increasing in $c$ and $-b$. By contradiction it is easy to show that $\Psi(r;c,b)$ is so for any $r\in [0,R_\infty)$.

\medskip
\noindent
{\bf Step 8. To show $R_\infty = R_\infty(c,b)$ is strictly decreasing in $c$ and $-b$.} By the previous step we see that $R_\infty = R_\infty (c,b)$ is decreasing in $c$ and $-b$.
We now show that $R_\infty(c,b)$ is strictly decreasing in $c$, the proof for its strict monotonicity in $-b$ is similar.

Assume by contradiction that $b< 0$ and $c_2 > c_1>0$ but $R_0:= R_\infty(c_1,b)=R_\infty(c_2,b)$. Denote by $\mathcal{G}_i$ the graph of $\Phi(r;c_i,b)\ (i=1,2)$ in $[0,R_0]$. Then
$\mathcal{G}_2$ lies above $\mathcal{G}_1$ and they have exactly one contact point $(0,0)$ in the band $\{0\leq x<R_0\}$. We now move $\mathcal{G}_1$ leftward with a distance $d>0$ such that this shifted graph, denoted by $\mathcal{G}_1^d$, lies on the left of $\mathcal{G}_2$ and contacts $\mathcal{G}_2$ at some points. Assume  $\tilde{P}=(\tilde{r},\Phi(\tilde{r};c_2,b))$ is one of them. Then at $\tilde{P}$ we have
$$
\Phi'(\tilde{r}+d;c_1,b)=\Phi'(\tilde{r};c_2,b), \quad
H|_{\tilde{P}\in \mathcal{G}_1^d} \geq H|_{\tilde{P}\in \mathcal{G}_2},
$$
where $H|_{P\in \mathcal{G}}$ denotes the mean curvature of the graph $\mathcal{G}$ at point $P$. Substituting them into the first equation in \eqref{eq0} we have
$$
\left( H|_{\tilde{P}\in \mathcal{G}_1^d} \right)^\alpha + b =
\frac{c_1} {\sqrt{1+\left[\Phi'(\tilde{r}+d;c_1,b)\right]^2}}  <  \frac{c_2}{\sqrt{1+\left[\Phi'(\tilde{r};c_2,b)\right]^2}}
 =  \left( H|_{\tilde{P}\in \mathcal{G}_2} \right)^\alpha +b,
$$
a contradiction.

\medskip
\noindent
{\bf Step 9. To show $\Phi(R_\infty(c,b)-0;c,b)<\infty$ and it is decreasing in $c$ and $-b$.} We first prove $\Phi(R_\infty(c,b)-0;c,b)<\infty$. Taking limit as $r\to R_\infty -0$ in the equation \eqref{eq-zeta} we have
\begin{equation}\label{zeta-slope}
\zeta'(r)\to 2a := -\frac{N-1}{R_\infty} + (-b)^{1/\alpha}.
\end{equation}
By $\Psi'(r)>0$ we have $a\geq 0$. The case $a=0$ implies that $R_\infty= (N-1)(-b)^{-1/\alpha}$ is independent of $c$ which contradicts the conclusion in Step 8, so we have $a>0$.

Integrating \eqref{eq-zeta} over $[r,R_\infty)$ we have
\begin{eqnarray}
\zeta(r)& = & \frac{1}{r^{N-1}}\left[R^{N-1}_\infty-\int^{R_\infty}_r
\left( c \sqrt{1-\zeta^2(s)} -b\right)^{{1/\alpha}}s^{N-1}ds\right]\\
& \leq & \frac{R_\infty^{N-1}}{r^{N-1}} - \frac{(-b)^{1/\alpha}(R_\infty^N -r^N)}{N r^{N-1}} . \nonumber
\end{eqnarray}
As $\rho:=  R_\infty -r\to 0$ we have
$$
\frac{R_\infty^{N-1}}{r^{N-1}} = \frac{R_\infty^{N-1}}{(R_\infty -\rho)^{N-1}} = 1 + (N-1)\frac{\rho}{R_\infty} + \frac{N(N-1)}{2R_\infty^2 } \rho^2 + o(\rho^2),
$$
and so
\begin{equation}\label{zeta<rho-rho2}
\zeta(r) \leq  1- 2a \rho + \left(\frac{N-1}{2R_\infty^2} - \frac{(N-1)a}{R_\infty}\right) \rho^2 + o( \rho^2).
\end{equation}
This implies that, for some $\varepsilon>0$ sufficiently small, we have
\begin{equation}\label{est-zeta-R}
\zeta(r) \leq  1- a \rho,\quad R_\infty -\varepsilon \leq r <R_\infty,
\end{equation}
and so,
$$
\Psi(r) \leq \frac{1 - a \rho }{\sqrt{a \rho }}, \quad R_\infty -\varepsilon \leq r< R_\infty.
$$
Integrating it over $[R_\infty -\varepsilon, R_\infty)$ we have
$$
\Phi(R_\infty -0) - \Phi(R_\infty -\varepsilon) \leq \int_{R_\infty -\varepsilon}^{R_\infty} \Psi(r) dr \leq \int_0^\varepsilon \frac{1-a\rho}{\sqrt{a\rho}} d\rho <\infty.
$$
This proves $\Phi(R_\infty -0)<\infty$.

Next we prove $K(c,b) := \Phi(R_\infty(c,b)-0;c,b)$ is decreasing in $c$ and $-b$. Since the proof is similar we only consider the monotonicity in $c$. By contradiction we assume $c_2 >c_1\geq 0$ but
$K(c_1,b) < K(c_2,b)$. As in Step 8, we can move the graph $\mathcal{G}_1$ of $\Phi(r;c_1,b)$ (which lies below the graph $\mathcal{G}_2$ of $\Phi(r;c_2,b)$) leftward with a distance $d>0$ such that, the shifted graph, denoted by $\mathcal{G}_1^d$, just lies above and touches $\mathcal{G}_2$ at some points. By our assumption $K(c_1,b)<K(c_2,b)$ and the facts $\Phi'(R_\infty(c_1,b)-0)=+\infty$ we know that the two graphes do not touch at the right end point of $\mathcal{G}^d_1$: $(R_\infty(c_1,b)-d, K(c_1,b))$. Hence, a similar argument as in Step 8 leads to a contradiction.

\medskip
\noindent
{\bf Step 10. Continuous dependence of $R_\infty(c,b)$ on $c$ and on $b$.}
We only prove the continuity in $c$, the proof for $b$ is similar and simpler. Assume by contradiction that there exists an increasing sequence $\{c_m\}$ and $\bar{c}$ satisfies $c_m \nearrow \bar{c}$ (the case for $c_m \searrow \bar{c}$ is proved similar), but
$$
\lim\limits_{m\to \infty} R_\infty (c_m,b) = R_\infty (\bar{c},b) +\sigma,
$$
for some $\sigma>0$. Denote
$$
\bar{R}:= R_\infty(\bar{c},b),\quad R_m := R_\infty(c_m,b),
$$
and, without loss of generality, we assume $\bar{R} +\sigma < R_m < \bar{R}+2\sigma$ for all $m$. Denote
$$
\bar{\zeta}(r) := \frac{\Psi(r;\bar{c},b)}{\sqrt{1+ \Psi^2(r;\bar{c},b)}},
\quad
\zeta_m (r) := \frac{\Psi(r;c_m,b)}{\sqrt{1+ \Psi^2(r;c_m,b)}},
$$
and
$$
\bar{\zeta}'(\bar{R}-0) = 2\bar{a}:= (-b)^{1/\alpha} -\frac{N-1}{\bar{R}},\quad \zeta'_m (R_m -0) = 2 a_m := (-b)^{1/\alpha}-\frac{N-1}{R_m},
$$
then
\begin{equation}\label{am>bara}
2a_m > 2\bar{a}+ A_1 ,\quad \mbox{with } A_1:= \frac{(N-1)\sigma}{\bar{R}(\bar{R}+\sigma)}.
\end{equation}
For any given $\sigma_0>0$ satisfying
\begin{equation}\label{def-sigma-0}
\sigma_0 (-b)^{1/\alpha} <A_1,
\end{equation}
there exists $\varepsilon_1 >0$ such that
\begin{equation}\label{nonlinear-est}
\left(\bar{c}\sqrt{1-\bar{\zeta}^2 (r)} -b \right)^{1/\alpha} \leq (1+\sigma_0)(-b)^{1/\alpha},\quad r\in J:= [\bar{R}-\varepsilon_1, \bar{R}).
\end{equation}
Integrating \eqref{eq-zeta} with $c=\bar{c}$ over $[r,\bar{R})$ and using a similar argument as proving \eqref{zeta<rho-rho2} we have
\begin{eqnarray*}
\bar{\zeta}(r) & \geq & \frac{\bar{R}^{N-1}}{r^{N-1}} - \frac{(1+\sigma_0) (-b)^{1/\alpha} ( \bar{R}^N - r^N)}{N r^{N-1}}\\
& = & 1- A_2 (\bar{R}-r) + A_3 (\bar{R}-r)^2 + o((\bar{R}-r)^2)\mbox{\ \ as\ \ }r\to \bar{R}-0,
\end{eqnarray*}
where
$$
A_2 := 2\bar{a} +\sigma_0 (-b)^{1/\alpha},\quad
A_3 := \frac{N-1}{2\bar{R}^2} -\frac{(N-1)\bar{a}}{\bar{R}} -\sigma_0 (-b)^{1/\alpha} \frac{N-1}{2\bar{R}}.
$$

On the other hand, in a similar way as proving \eqref{zeta<rho-rho2}
we have
$$
\zeta_m (r) \leq  1- 2 a_m (R_m -r) + A_4  (R_m -r)^2 + o((R_m -r)^2)\mbox{\ \ as\ \ }r\to R_m-0,
$$
where
$$
A_4 := \frac{N-1}{2R_m^2} - \frac{(N-1)a_m}{R_m}.
$$
Therefore, when $\varepsilon_1>0$ is sufficiently small, we have
\begin{eqnarray*}
\zeta_m (\bar{R}-\varepsilon_1) & \leq & \zeta_m (R_m - 2 \varepsilon_1) \leq 1- 4 a_m  \varepsilon_1 +  8 A_4  \varepsilon^2_1\\
& \leq & 1- 2 (2\bar{a} +A_1) \varepsilon_1 + \bar{a} \varepsilon_1\\
& \leq & 1-  A_2 \varepsilon_1 - \bar{a} \varepsilon_1 - A_1 \varepsilon_1\\
& \leq & \bar{\zeta}(\bar{R}-\varepsilon_1)  - A_1 \varepsilon_1.
\end{eqnarray*}
This contradicts the fact $\zeta_m(r)\to \bar{\zeta}(r)$ uniformly on the interval $[0,\bar{R}-\varepsilon_1]$. Note that in this interval all of the terms in the equation \eqref{eq-zeta}  as well as their derivatives are regular due to $\zeta_m (r) \leq \bar{\zeta}(\bar{R}-\varepsilon_1)<1$. This proves the continuous dependence of $R_\infty(c,b)$ on $c$.

\medskip
\noindent
{\bf Step 11. To show the limits of $R_\infty(c,b)$ in \eqref{limits-R-infty}.}  The first limit as $c\to 0$ follows from \eqref{R-infty-bounds} directly. The second limits follows from the following Claim:

\medskip
\noindent
{\bf Claim.} For any $b<0$ and any $R\in (0, N(-b)^{-1/\alpha})$, there exists a unique $c=c(b,R)$ such that $R_\infty (c(b,R),b)=R$.

\medskip
\noindent
Without loss of generality, we assume $N (-b)^{-1/\alpha} >1$,
and prove the claim for $R=1$. If $R_\infty (c', b)\leq 1$ for some $c'>0$, then by Steps 8 and 10, there exists a unique $c=c(b)\in (0,c']$ such that $R_\infty(c(b),b)=1$.
Hence, we only need to show that $R_\infty (c',b)\leq 1$ for some large $c'>0$.
Assume by contradiction that this is not true. Then there exists an increasing sequence $\{c_m\}$ with $c_m \nearrow \infty\ (m\to \infty)$, but $R_\infty(c_m,b)\searrow \tilde{R}\geq 1$. Denote the corresponding solutions of \eqref{eq-zeta}, \eqref{psi-TW} and \eqref{TW1} with $c=c_m$ by $\zeta_m$, $\Psi_m$ and $\Phi_m$, respectively. Since $\zeta_m$ is increasing in $m$ we have
$$
\zeta_m(r) \nearrow \tilde{\zeta}(r)\mbox{\ \ as\ \ }m\to \infty.
$$
We show that $\tilde{\zeta}(r)\equiv 1$. For, otherwise, $h:= \tilde{\zeta}(2r_0)<1$ for some $r_0 \in (0,\frac12)$, then
$$
\zeta_m (r)\leq \zeta_m (2r_0)\leq h:= \tilde{\zeta}(2r_0) <1,\quad 0\leq r\leq 2 r_0,
$$
and so, on $[r_0,2r_0]$, we have
$$
\zeta'_m(r)  =  \left( c_m \sqrt{1-\zeta_m^2} -b\right)^{1/\alpha} -\frac{N-1}{r}\zeta_m  \geq  \left( c_m \sqrt{1-h^2} -b\right)^{1/\alpha} -\frac{N-1}{r_0}  > \frac{1}{r_0},
$$
provided $m$ is sufficiently large. Integrating it over $[r_0,2r_0]$ we obtain
$$
\zeta_m (2r_0)> \zeta_m(2r_0)- \zeta_m(r_0) = \int_{r_0}^{2r_0} \zeta'_m(r) dr >1,
$$
a contradiction. Therefore, for any $r_1\in (0,1]$ we have
$$
\zeta_m (r)\geq \zeta_m(r_1)\to \tilde{\zeta}(r_1) =1 \mbox{\ \ as\ \ }m\to \infty,\quad r\in [r_1,1].
$$
This means that $\zeta_m(r)\to 1$ as $m\to \infty$ in the topology of $L^\infty_{loc} ((0,1])$.

As in Step 9, we denote
$$
K(c_m, b):= \Phi_m (R_\infty(c_m,b)-0;c_m,b),\quad m=1,2,\cdots.
$$
Choose
$$
0< \varepsilon \leq \frac{1}{2[1+4(K(c_1,b))^2 ]},
$$
then there exists $m_0>0$ such that when $m\geq m_0$ we have
$$
1-\varepsilon \leq \zeta_m(r)<1,\quad \frac12 \leq r\leq 1.
$$
This implies that, when $\varepsilon>0$ is small enough, we have
$$
\Psi_m(r) \geq 2K(c_1,b),\quad \frac12 \leq r\leq 1.
$$
Integrating it over $[\frac12,1]$ we have
$$
K(c_m,b):= \Phi_m (R_\infty(c_m,b)-0) \geq \Phi_m (1)
> \int_{\frac12}^1 \Psi_m (r)dr \geq K(c_1,b),
$$
contradicts the conclusion in Step 9. This proves the claim.

Summarizing the above steps we finish the proof for Theorem \ref{thm:TSb<0} (II).
\hfill $\Box$

\medskip
\noindent
{\it Proof of Theorem \ref{thm:TSb<0} {\rm (I)}.} We now consider the case $b=0$. The construction of the solution $\Phi$, as well as its uniqueness and the properties
$$
\Phi'(0)=0,\quad \Phi\in C^\infty ([0,\infty)),\quad \Phi'(r),\ \Phi''(r)>0 \mbox{ for } r>0
$$
are proved in the same way as in Steps 1-4 in the above proof for (II). In what follows we prove the asymptotic profile of $\Phi(r)$ in \eqref{phi-shape-b=0} as $r\to \infty$.

\medskip
\noindent
{\bf Step 1. To show
\begin{equation}\label{lower<psi<upper}
A_1 r^\alpha \leq \Psi(r) \leq A_2 r^\alpha,\quad r\gg 1,
\end{equation}
for some $A_2>A_1>0$ depending only on $N,\alpha$ and $c$.
}

We take $R>0$ sufficiently large such that
\begin{equation}\label{choose-large-R}
c^2 R^{2\alpha} \geq 2 (2N)^{2\alpha},\quad \Psi(R)\geq 1.
\end{equation}
Then for any $r_2 > r_1 > R$ and any $r\in [r_1,r_2]$ we have
$$
P(r_2) \leq P(r)\leq P(r_1),\quad \mbox{with } P(r) := \left( \frac{c}{\sqrt{1+\Psi^2(r)}} \right)^{{1/\alpha}}.
$$
Integrating the equation \eqref{eq-zeta} over $[r_1,r_2]$ we obtain
\begin{equation}\label{<zeta-r2-r1<}
\frac{P(r_2)}{N}(r^N_2 -r^N_1) \leq \zeta(r_2) r_2^{N-1} - \zeta(r_1) r_1^{N-1}
\leq \frac{P(r_1)}{N}(r^N_2 -r^N_1).
\end{equation}
By the first inequality we have
$$
P(r_2) (r_2^N -r_1^N) \leq N r_2^{N-1}.
$$
Taking $r_2^N = 2r_1^N$ and using the formula of $P(r)$ we obtain
$$
1+\Psi^2(r_2) \geq \frac{c^2}{(2N)^{2\alpha}} r_2^{2\alpha}.
$$
Furthermore, by the choice of $R$, when $r_2 \gg R$ we have
\begin{equation}\label{Psi-lower-est}
\Psi(r_2) \geq A_1 r_2^\alpha,\quad \mbox{ with } A_1 := \frac{c}{\sqrt{2}(2N)^\alpha}.
\end{equation}
In a similar way, by the second inequality in \eqref{<zeta-r2-r1<} and the choice of $R$ we have
$$
P(r_1) \frac{r_2^N -r_1^N}{Nr_2^{N-1}}  \geq  \zeta(r_2) - \frac{r_1^{N-1}}{r_2^{N-1}}\geq \frac12 - \frac{r_1^{N-1}}{r_2^{N-1}}.
$$
Taking $r_1>R$ and $r_2 = k r_1 $ with $k^{N-1}=4$, then we have
$$
\left(\frac{c}{\sqrt{1+\Psi^2(r_1)}}\right)^{{1/\alpha}} \geq \frac{N}{4 r_1}\cdot \frac{k^{N-1}}{k^N -1} \geq \frac{N}{4^{\frac{N}{N-1}} r_1},
$$
and so
\begin{equation}\label{Psi-upper-est}
\Psi (r_1) \leq A_2 r_1^\alpha,\quad \mbox{ with } A_2:= \frac{4^{\frac{N\alpha}{N-1}}c}{N^{\alpha}} .
\end{equation}
Combining with \eqref{Psi-lower-est} we obtain \eqref{lower<psi<upper}.

\medskip
\noindent
{\bf Step 2. To show \eqref{phi-shape-b=0}.} Set
$$
\xi(r) := \Psi(r) r^{-\alpha},\quad r\in J_1 := [1,\infty).
$$
Then by \eqref{lower<psi<upper} and $\Psi'(r)>0$ we have
\begin{equation}\label{xi-property}
A_1^0 <\xi(r) \leq A_2^0,\quad \xi'(r) +\alpha \frac{\xi(r)}{r}>0,\quad r\in J_1,
\end{equation}
for some $A_2^0>A_1^0>0$. Substituting $\Psi=\xi (r) r^\alpha$ and $b=0$ into \eqref{psi-TW} we have
$$
\frac{[\xi' r^\alpha +\alpha r^{\alpha-1}\xi](1+o(1))}{\xi^2 r^{2\alpha}} +
(N-1)\xi r^{\alpha-1} = (1+o(1)) c^{{1/\alpha}} \xi^{1-{1/\alpha}} r^{\alpha-1} \mbox{\ \ as\ \ }r\to \infty.
$$
Here we used the expression $\sqrt{1+\xi^2 r^{2\alpha} } = (1+o(1))\xi r^\alpha\ (r\to \infty)$. Multiplying the equality by $\xi^2 r^{1-\alpha}$ we have
\begin{equation}\label{eq-xi}
[\xi' r^{1-2\alpha} +\alpha r^{-2\alpha}\xi ](1 +o(1)) = F_1(\xi;c,\alpha,N) (1+o(1)) \xi^3,\quad r\to \infty,
\end{equation}
where
\begin{equation}\label{def-c-xi}
F_1 (\xi; c,\alpha,N) := c^{{1/\alpha}}\xi^{-{1/\alpha}} - (N-1).
\end{equation}
By \eqref{xi-property} and \eqref{eq-xi} we see that $F_1 >0$. We then consider three cases.

(1). In case $F_1 (\xi(r);c,\alpha,N)\searrow 0$ as $r\to \infty$, we obtain
\begin{equation}\label{limit-xi}
\xi(r)\to \bar{\xi}:= \frac{c}{(N-1)^\alpha}\mbox{\ \ as\ \ }r\to \infty.
\end{equation}
This is what we desired.

(2). In case $F_1 \geq 2\delta_1 >0$ for some $\delta_1>0$ and all large $r$, then \eqref{xi-property} and \eqref{eq-xi} yield
$$
\xi' r^{1-2\alpha}\geq \delta_1 \xi^3\geq \delta_1 (A_1^0)^3,\quad r\gg 1.
$$
By integrating we obtain $\xi(r)\to \infty$ as $r\to \infty$, contradicts \eqref{xi-property}.

(3). The left case is that, for some $\delta_2>0$ and two sequences $\{r'_n\},\ \{s'_n\}$, there hold
$$
F_1 (\xi(r'_n);c,\alpha,N)\geq 2 \delta_2,\quad F_1 (\xi(s'_n);c,\alpha,N)\to 0\ \ (n\to \infty).
$$
Therefore, we can find a sequence $\{r_n\}$ such that each $r_n$ is a local minimum of $\xi(r)$ and
$$
F_1 (\xi(r_n);c,\alpha,N)\geq \delta_2.
$$
Taking $r=r_n$ in the equation \eqref{eq-xi} and taking limit as $n\to \infty$ we obtain $\xi(r_n)\to 0$ as $n\to \infty$, which contradicts \eqref{xi-property} again.

Therefore the limit \eqref{limit-xi} must hold and so we obtain
\begin{equation}\label{psi-shape-b=0}
\Psi(r) = \left[ \frac{c}{(N-1)^\alpha} +o(1)\right]r^\alpha,\quad r\to \infty.
\end{equation}
Integrating it over $[0,r]$ and letting $r\to \infty$ we obtain \eqref{phi-shape-b=0}.

\medskip

We finally prove the limit \eqref{limits-Phi'-to-infty}. For any given $r_0>0$, the limit $\Phi'(r_0;c,0)\to \infty\ (c\to \infty)$ is equivalent to say that, for any given small $\varepsilon>0$, there exists $C>0$ large such that when $c\geq C$, there holds
$$
\zeta(r_0;c,0) :=\frac{\Psi(r_0;c,0)}{\sqrt{1+\Psi^2(r_0;c,0)}} \geq 1-\varepsilon.
$$
By contradiction, we assume there exists $c_m \nearrow \infty$ such that $\zeta(r_0;c_m,0)\leq 1-\varepsilon$. Then for $r\in [0,r_0]$ we have $\zeta(r;c_m,0)\leq \zeta(r_0;c_m,0)\leq 1-\varepsilon$. Integrating \eqref{eq-zeta} over $[0,r_0]$ we have
\begin{eqnarray*}
\zeta(r_0,c_m,0)r_0^{N-1} & = & \int_0^{r_0} \left(c_m \sqrt{1-\zeta^2(r;c_m,0)}\right)^{1/\alpha} r^{N-1} dr\\
& \geq  & \int_0^{r_0} \left(c_m {\varepsilon}\right)^{1/\alpha} r^{N-1} dr
= (c_m \varepsilon)^{1/\alpha} \frac{r_0^N}{N},
\end{eqnarray*}
which contradicts $\zeta(r_0;c_m,0)\leq 1-\varepsilon$ and $c_m\to \infty\ (m\to \infty)$. Therefore, the limit in \eqref{limits-Phi'-to-infty} holds.

This proves Theorem \ref{thm:TS-in-whole} (I).
\hfill $\Box$

\subsection{TSs in the cylinder}
We now select some TSs from above so that they satisfy the boundary condition \eqref{TS-BC} with $k>0$.

\medskip
\noindent
{\it Proof of Theorem \ref{thm:TS-in-cylinder} in case {\rm (a)} or {\rm (b)} holds.} Let $\Phi(r;c,b)$ be the solution of \eqref{TW1} and $\Psi:=\Phi'$, $\zeta:= \frac{\Psi}{\sqrt{1+\Psi^2}}$ as above. For any $r\in (0,R_\infty)$ we have
$$
 1\geq  \sqrt{1-\zeta^2 (t)} \geq  \sqrt{1-\zeta^2(r)} ,\quad t\in [0,r].
$$
Using this inequality and integrating the equation \eqref{eq-zeta} over $[0,r]$ we have
\begin{equation}\label{>zeta>}
(c-b)^{{1/\alpha}} \frac{r}{N} \geq \zeta(r)  \geq \left( c \sqrt{1-\zeta^2(r)} -b \right)^{{1/\alpha}} \frac{r}{N},\quad 0\leq r<R_\infty.
\end{equation}

\medskip
{\bf Case (a): $b=0<k$.} When $c=0$, it follows from \eqref{>zeta>} that $\zeta(r)\equiv \Psi(r)\equiv 0$, and so $\Psi(1;0,0)=0$.
On the other hand, when $c=c_1:= N^\alpha \sqrt{1+k^2}$, if we assume $\Psi(1;c_1,0)\leq k$, then the second inequality in \eqref{>zeta>} implies that $\zeta(1)\geq 1$. This contradicts the fact $\zeta (r)<1$. Therefore, $\Psi(1;c_1,0)>k$.
Since $\Psi(1;c,b)$ is strictly increasing in $c$, there exists a unique $c=c(k)\in (0,c_1)$ such that the solution $\Psi(r;c(k),0)$ of \eqref{psi-TW} with $b=0,\ c=c(k)$ satisfies $\Psi(1)=k$.

\medskip
{\bf Case (b): $b<0<k$ and the following assumption holds:
\begin{equation}\label{negative-b}
(-b)^{{1/\alpha}} < \frac{k N}{\sqrt{1+k^2}}.
\end{equation}
}
When $c=0$, by \eqref{>zeta>} we have
$$
\zeta(r;0,b) =\frac{(-b)^{1/\alpha}}{N} r,\quad 0\leq r<R_\infty(0,b).
$$
Hence $R_\infty(0,b) = N(-b)^{-1/\alpha}$. The condition \eqref{negative-b} then implies that
$$
R_\infty(0,b) >\frac{\sqrt{1+k^2}}{k}\geq 1
$$
and
$$
\zeta(1;0,b) =\frac{(-b)^{1/\alpha}}{N} < \frac{k}{\sqrt{1+k^2}} \ \ \Leftrightarrow\ \ \Psi(1;0,b)<k.
$$
On the other hand, by the Claim in Step 11 in the proof of Theorem
\ref{thm:TSb<0} {\rm (II)},  there exists $c(b)>0$ such that $R_\infty(c(b),b)=1$, that is, $\Psi(1;c(b),b)=\infty$. Since $\Psi(1;c,b)$ is strictly increasing in $c$, there exists a unique $\tilde{c}= \tilde{c}(b,k)\in (0,c(b))$ such that $\Phi(r;\tilde{c},b)$ satisfies the boundary condition \eqref{TS-BC}.

Finally, we consider the case (b) with $k=\infty$. In this case, the condition \eqref{negative-b} is $(-b)^{1/\alpha}<N$. By the Claim in Step 11 in the proof of Theorem \ref{thm:TSb<0} (II) , $R_\infty(c(b),b)=1$ for some $c(b)>0$, and so  $\Psi(1;c(b),b)=k=\infty$.

This proves Theorem \ref{thm:TS-in-cylinder} in case (a) or (b) holds, including the case $b<0,\ k=\infty$ and $(-b)^{1/\alpha}<N$.
\hfill $\Box$


\section{Translating Solutions in Case $b>0$}

In this section we study the TSs in case $b>0$.
In the first subsection we consider the case $c\geq b>0$, and in the second section we consider the case $b>c>0$.
We will use notations as in the previous section, that is, denote the solution of \eqref{TW1} by $\Phi(r;c,b)$, and write
$$
\Psi(r;c,b):=\Phi'(r;c,b),\quad \zeta(r;c,b):= \frac{\Psi(r)}{\sqrt{1+\Psi^2(r)}}.
$$

\subsection{TSs in case $c\geq b>0$}
We will prove the following result.

\begin{thm}\label{thm:TSc>b>0}
For any $b,c$ satisfying $c\geq b >0$, the problem \eqref{TS} has a unique solution $\varphi=\Phi(r;c,b)\in C^\infty([0,\infty))$. $\Phi(r;c,b)$ and $\Phi'(r;c,b)$ are strictly increasing in $c$ and $-b$. Moreover,

\begin{enumerate}[{\rm (I).}]
 \item when $c>b>0$, $\Phi'(r),\ \Phi''(r)>0$ in $(0,\infty)$ and
\begin{equation}\label{limit-Psi}
 \Phi'(r)\to \Psi_0:= \frac{\sqrt{c^2-b^2}}{b}\mbox{\ \  as\ \ } r\to \infty;
\end{equation}

 \item when $c=b>0$, $\Phi(r)\equiv 0$.
\end{enumerate}
\end{thm}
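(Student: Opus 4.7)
I will mirror the approximation-and-limit scheme of Section 2. For $\varepsilon>0$ replace $\frac{N-1}{r}$ by $\frac{N-1}{r+\varepsilon}$ and let $\psi_\varepsilon=\varphi_\varepsilon'$ solve the resulting first-order problem; then pass to the limit $\varepsilon\to 0$. The key new feature in the regime $b>0$ is the value
\[
\Psi_0=\frac{\sqrt{c^2-b^2}}{b},
\]
which is the unique nonnegative zero of $\psi\mapsto \tfrac{c}{\sqrt{1+\psi^2}}-b$, hence the equilibrium where the ``driving'' part of the right-hand side of the ODE for $\psi_\varepsilon$ vanishes. Case (II) ($c=b$) is immediate: then $\Psi_0=0$, and since the factor $\bigl(\tfrac{b}{\sqrt{1+\varphi'^2}}-b\bigr)^{1/\alpha}$ requires its base to be nonnegative, we are forced to $\varphi'\equiv 0$, so $\Phi\equiv 0$ is the unique solution. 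The substantive content is case (I).

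\textbf{A priori bounds on $\psi_\varepsilon$.} I will first establish
\[
0\le \psi_\varepsilon(r)<\Psi_0,\qquad \psi_\varepsilon'(r)>0\quad (r>0),
\]
uniformly in $\varepsilon$. Positivity of $\psi_\varepsilon$ follows exactly as in Claim~1 of Lemma~\ref{lem2.1} (from $\psi_\varepsilon'(0)=(c-b)^{1/\alpha}>0$). The upper barrier $\psi_\varepsilon<\Psi_0$ is the essential new estimate: if $r_*$ were the first point with $\psi_\varepsilon(r_*)=\Psi_0$, then evaluating the equation at $r_*$ gives
\[
\frac{\psi_\varepsilon'(r_*)}{1+\Psi_0^2}=-\frac{N-1}{r_*+\varepsilon}\,\Psi_0<0,
\]
contradicting the fact that $\psi_\varepsilon$ is increasing up to $\Psi_0$ at $r_*$. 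Strict positivity of $\psi_\varepsilon'$ is obtained as in the proof of Claim~1 in Lemma~\ref{lem2.1}: at a hypothetical first zero $r_1$ of $\psi_\varepsilon'$, differentiating the equation and simplifying at $r_1$ yields $\psi_\varepsilon''(r_1)=(1+\psi_\varepsilon^2(r_1))(N-1)\psi_\varepsilon(r_1)/(r_1+\varepsilon)^2>0$, contradicting that $\psi_\varepsilon'$ attains a minimum there. These bounds show the approximate solution exists globally, so $R_\infty^\varepsilon=\infty$.

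\textbf{Asymptotic limit and passage to $\varepsilon\to 0$.} Being increasing and bounded above by $\Psi_0$, $\psi_\varepsilon(r)$ converges to some $\bar{\psi}_\varepsilon\le \Psi_0$ as $r\to\infty$; if $\bar{\psi}_\varepsilon<\Psi_0$, the right-hand side of the ODE would tend to the positive number $\bigl(c(1+\bar{\psi}_\varepsilon^2)^{-1/2}-b\bigr)^{1/\alpha}\sqrt{1+\bar{\psi}_\varepsilon^2}$ (since $(N-1)\psi_\varepsilon/(r+\varepsilon)\to 0$), forcing $\psi_\varepsilon'$ to have a positive limit and contradicting convergence of $\psi_\varepsilon$. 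Hence $\bar{\psi}_\varepsilon=\Psi_0$. The monotonicity of $\psi_\varepsilon$ in $\varepsilon$ (Lemma~\ref{lem2.2}) and the uniform interior estimates (Lemma~\ref{lem2.3}, which go through verbatim because all coefficients remain bounded on compacts) let me extract the limit $\Phi:=\lim_{\varepsilon\to 0}\varphi_\varepsilon\in C^\infty([0,\infty))$ by Cantor's diagonal procedure, exactly as in Steps~1--3 of the proof of Theorem~\ref{thm:TSb<0}(II). The properties $\Phi(0)=\Phi'(0)=0$, $\Phi',\Phi''>0$ on $(0,\infty)$, and $\Phi'(r)\to\Psi_0$ transfer to $\Phi$: the last limit is reproduced by the same monotone-and-bounded argument applied to $\Psi=\Phi'$ itself.

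\textbf{Monotonicity, uniqueness, and main obstacle.} Uniqueness follows verbatim from Step~4 in the proof of Theorem~\ref{thm:TSb<0}(II): the first-order ODE together with $\psi(0)=0$, combined with a Gronwall-type bound on the difference of two solutions on $[\delta,R]$ and the vanishing initial condition, rules out two distinct solutions. Strict monotonicity of $\Phi$ and $\Phi'$ in $c$ and $-b$ reduces, as in Step~7 of Section~2, to monotonicity of the initial slope: from the equation at $r=0$ one reads off $\psi_\varepsilon'(0)=(c-b)^{1/\alpha}$, which is strictly increasing in $c$ and in $-b$, and an ODE-comparison argument prevents later overtakings. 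The main obstacle is pinpointing $\Psi_0$ as the exact asymptotic value of $\Phi'$: this hinges on having the \emph{sharp} upper barrier $\psi_\varepsilon<\Psi_0$ (any looser bound would leave room for other equilibria of the limiting equation) together with a careful argument that the only way for the monotone bounded limit of $\Psi$ to be consistent with the ODE at $r=\infty$ is $\Psi_0$ itself.
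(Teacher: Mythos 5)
Your proposal is correct and follows essentially the same approximation-and-limit scheme the paper itself invokes (the paper's proof of Theorem~\ref{thm:TSc>b>0} consists of a single sentence referring back to Section~2). In fact you supply more detail than the paper does: the sharp upper barrier $\psi_\varepsilon<\Psi_0$ via the observation that $\sqrt{1+\Psi_0^2}=c/b$ makes the driving term vanish, the consequent global existence $R_\infty^\varepsilon=\infty$, and the identification of $\Psi_0$ as the unique admissible limit of the bounded monotone $\Psi$ are exactly the new ingredients needed beyond Theorem~\ref{thm:TSb<0}, and you have them all in the correct logical order (the barrier must precede the $\psi_\varepsilon'>0$ argument, since the latter differentiates through the power and needs the base strictly positive).
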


\begin{proof}
The proof is similar as in the previous section.
\end{proof}

\medskip
In case $c>b>0$, by the limit \eqref{limit-Psi} we see that, for any small $\varepsilon>0$, there exists $R_\varepsilon>0$ such that $|\Psi(r)-\Psi_0|\leq \varepsilon$ for $r\geq R_\varepsilon$.  Hence, there exists $M>0$ such that
\begin{equation}\label{app-V-shape}
(\Phi_0-\varepsilon) r -M \leq \Phi(r) \leq (\Psi_0 +\varepsilon)r +M,\quad r\geq 0.
\end{equation}
In other words, the shape of $\Phi$ is an {\it approximate-V-shaped} one.
One may expect to improve it to be a
{\it V-shaped} one (that is, to delete the error $\varepsilon$), as it was shown in \cite{Lou1, NT1, NT2} etc. for the problem \eqref{TW1} in case $N=\alpha =1$. We now show that this is generally impossible in higher space dimension.

\begin{prop}\label{prop:unbound-V}
Assume $N>1$, $0<\alpha\leq 1$ and $c>b>0$. Then
$$
|\Phi(r) -\Psi_0 r|\to \infty \mbox{\ \ as\ \ }r\to \infty.
$$
\end{prop}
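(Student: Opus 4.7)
The plan is to show that $\zeta(r) := \Phi'(r)/\sqrt{1+\Phi'^2(r)}$ approaches its limit $\zeta_0 := \sqrt{c^2-b^2}/c$ so slowly that the defect $\Psi_0 r - \Phi(r) = \int_0^r (\Psi_0 - \Phi'(s))\,ds$ diverges. Since $\Phi''>0$ and $\Phi'(r) \nearrow \Psi_0$ by Theorem \ref{thm:TSc>b>0}, we have $\Phi'(r) < \Psi_0$ on $(0,\infty)$, so $\Phi(r) - \Psi_0 r$ is negative and it suffices to prove $\Psi_0 r - \Phi(r)\to \infty$.

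First I would rewrite \eqref{psi-TW} in terms of $\zeta$ (using the same manipulation that produced \eqref{eq-zeta}) to obtain
$$
\zeta' + \frac{N-1}{r}\zeta = \bigl(c\sqrt{1-\zeta^2}-b\bigr)^{1/\alpha}, \qquad \zeta(0)=0,
$$
with $\zeta$ strictly increasing to $\zeta_0$. Dropping the nonnegative term $\zeta'$ yields the lower bound
$$
\bigl(c\sqrt{1-\zeta^2(r)}-b\bigr)^{1/\alpha}\geq \frac{N-1}{r}\zeta(r)\geq \frac{(N-1)\zeta_0}{2r}
$$
for all $r$ sufficiently large. On the other hand, setting $\eta(r):=\zeta_0-\zeta(r)>0$ and using $c\sqrt{1-\zeta_0^2}=b$, a Taylor expansion of $\zeta\mapsto c\sqrt{1-\zeta^2}-b$ at $\zeta_0$ gives
$$
c\sqrt{1-\zeta^2}-b = \frac{b\zeta_0}{1-\zeta_0^2}\,\eta + O(\eta^2) \qquad (\eta\to 0^+),
$$
hence $c\sqrt{1-\zeta^2(r)}-b \leq C_1\,\eta(r)$ for $r$ large. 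Combining the two estimates produces
$$
\eta(r)\;\geq\; \frac{C_2}{r^\alpha} \qquad \text{for all } r\geq R_0,
$$
for some constants $C_2>0$ and $R_0>0$.

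Next, writing $\Phi'(r)=g(\zeta(r))$ and $\Psi_0=g(\zeta_0)$ with $g(t)=t/\sqrt{1-t^2}$, whose derivative $g'(t)=(1-t^2)^{-3/2}\geq 1$ on $[0,\zeta_0]$, the mean value theorem gives $\Psi_0-\Phi'(r)\geq \eta(r)\geq C_2/r^\alpha$ for $r\geq R_0$. Integration then yields
$$
\Psi_0 r - \Phi(r) \;=\; \int_0^r \bigl(\Psi_0-\Phi'(s)\bigr)\,ds \;\geq\; \int_{R_0}^r \frac{C_2}{s^\alpha}\,ds + O(1),
$$
which diverges as $r\to\infty$ because $0<\alpha\leq 1$: logarithmically when $\alpha=1$ and like $r^{1-\alpha}$ when $\alpha<1$. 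This proves $|\Phi(r)-\Psi_0 r|\to\infty$.

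The main obstacle is the Taylor step, which deserves care because when $\alpha<1$ the exponent $1/\alpha>1$ makes $F(\zeta):=(c\sqrt{1-\zeta^2}-b)^{1/\alpha}$ degenerate at $\zeta_0$ (one has $F'(\zeta_0)=0$); the trick of expanding the \emph{base} $c\sqrt{1-\zeta^2}-b$ rather than $F$ itself avoids this issue. The hypothesis $\alpha\leq 1$ is essential: the same computation shows that when $\alpha>1$ the bound $\eta(r)\gtrsim r^{-\alpha}$ is integrable, which is consistent with the genuinely V-shaped asymptotics known for $N=\alpha=1$ in \cite{NT1,NT2,Lou1}.
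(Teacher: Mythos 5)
Your proposal is correct, and it takes a route that is genuinely different from the paper's in its key step. The paper works with $\eta_{\rm pap}:=\Psi_0-\Psi(r)$, converts \eqref{psi-TW} into a first-order ODE $\eta_{\rm pap}' = -\tfrac{F(\eta_{\rm pap})}{A(\eta_{\rm pap})}\eta_{\rm pap}^{1/\alpha}+\tfrac{N-1}{A(\eta_{\rm pap})r}(\Psi_0-\eta_{\rm pap})$, uses $\alpha\le 1$ in the form $\eta_{\rm pap}^{1/\alpha}\le\eta_{\rm pap}$ for small $\eta_{\rm pap}$ to pass to the linear differential inequality $\eta_{\rm pap}'\ge -B_1\eta_{\rm pap}+B_2/r$, and then solves it by integrating factor, obtaining $\eta_{\rm pap}(r)\gtrsim 1/r$ and hence logarithmic divergence. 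You work in the inclination variable $\zeta=\Psi/\sqrt{1+\Psi^2}$, drop the nonnegative term $\zeta'$ from the identity $\zeta'+\tfrac{N-1}{r}\zeta=(c\sqrt{1-\zeta^2}-b)^{1/\alpha}$, and then invert a first-order Taylor bound on the base $c\sqrt{1-\zeta^2}-b$ to get $\zeta_0-\zeta(r)\gtrsim r^{-\alpha}$ directly, transferring this to $\Psi_0-\Phi'(r)$ via the mean value theorem and $g'\ge 1$. This avoids solving any differential inequality and, for $\alpha<1$, gives a strictly stronger pointwise lower bound ($r^{-\alpha}$ versus $r^{-1}$) and a stronger divergence rate ($r^{1-\alpha}$ versus $\log r$). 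Another structural difference worth noting: in the paper the hypothesis $\alpha\le 1$ is used algebraically (in $\eta^{1/\alpha}\le\eta$), whereas in yours it enters only at the final integration step ($\int^\infty r^{-\alpha}\,dr=\infty$). Your remark about expanding the base rather than the power $(\,\cdot\,)^{1/\alpha}$ to avoid the vanishing derivative at $\zeta_0$ when $\alpha<1$ is exactly the right precaution; the paper handles the same issue by factoring out $\eta^{1/\alpha}$ explicitly before taking limits. Both arguments are sound.
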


\begin{proof}
Denote $\eta:= \Psi_0 - \Psi(r)$, then $\eta(r)>0,\ \eta(r)\to 0\ (r\to \infty)$ and $\eta'(r)=-\Psi'(r)<0$. By \eqref{psi-TW} we have
$$
\frac{-\eta'}{1+(\Psi_0-\eta)^2}  +\frac{N-1}{r}(\Psi_0 -\eta) = \left( \frac{c}{\sqrt{1+(\Psi_0-\eta)^2}} -\frac{c}{\sqrt{1+\Psi_0^2}} \right)^{1/\alpha} \sqrt{1+(\Psi_0 -\eta)^2},
$$
that is,
$$
\eta'(r) = \frac{-F(\eta)}{A(\eta)} \eta^{1/\alpha} + \frac{N-1}{A(\eta)r} (\Psi_0 -\eta),
$$
with
$$
A(\eta):= \frac{1}{1+(\Psi_0 -\eta)^2} ,
$$
and
$$
F(\eta)  :=  \left[\frac{c(2\Psi_0 -\eta)}{\sqrt{1+(\Psi_0-\eta)^2}\cdot \sqrt{1+\Psi_0^2} \cdot [\sqrt{1+(\Psi_0-\eta)^2}+\sqrt{1+\Psi_0^2}]} \right]^{1/\alpha}  \sqrt{1+(\Psi_0-\eta)^2}.
$$
Therefore, when $r\gg 1$, that is, $0<\eta\ll 1$, we have
$$
A_0:= \frac{1}{1+\Psi_0^2} < A(\eta) \leq \frac32 A_0,\quad F(\eta)\leq F^0:= \frac{(2c\Psi_0)^{1/\alpha}}{(1+\Psi_0^2)^{(3-\alpha)/(2\alpha)}}.
$$
In case $0<\alpha\leq 1$ we have
\begin{equation}\label{ineq-eta}
\eta'(r) \geq -B_1 \eta^{1/\alpha} + \frac{B_2}{r} \geq -B_1 \eta +\frac{B_2}{r},\quad r\geq r_1,
\end{equation}
for sufficiently large $r_1$, where
$$
B_1 := \frac{F^0}{A_0} >0,\quad B_2 := \frac{(N-1)\Psi_0}{2A_0}>0.
$$
Solving the inequality \eqref{ineq-eta} we obtain
\begin{equation}\label{temp-eta1}
\eta(r) \geq \eta(r_1)e^{B_1(r_1 -r)} +B_2 F_1(r), \quad r>r_1,
\end{equation}
where
$$
F_1(r):= e^{-B_1 r} \int_{r_1}^r \frac{e^{B_1 t}}{t} dt \sim \frac{1}{B_1 r} \mbox{\ \ as\ \ }r\to \infty.
$$
Thus, there exists $r_2>r_1$ such that $F_1(r)\geq \frac{1}{2B_1 r} $ for $r\geq r_2$. Integrating \eqref{temp-eta1} we have
$$
\Psi_0 r - \Phi(r) = \int_0^{r} \eta(t) dt >\int_{r_2}^r \left[ \eta(r_1)e^{B_1(r_1 -t)}  + \frac{B_2}{2B_1 t}  \right] dt \to \infty \ \ \mbox{ as }\ \ r\to \infty.
$$
This proves the proposition.
\end{proof}

From above we see that, for any $c>b$, there exists an approximate-V-shaped TS with speed $c$. We select one from them such that it satisfies the boundary condition \eqref{TS-BC} for $k\geq 0$.

\medskip
\noindent
{\it Proof of Theorem \ref{thm:TS-in-cylinder} in case {\rm (c)} holds.}
Note that we are considering the case $k>0$. Take $c_1 := (N^\alpha +b)\sqrt{1+k^2}$, we show that $\Psi(1;c_1,b)>k$. Otherwise,
$$
\Psi(r;c_1,b)\leq \Psi(1;c_1,b)\leq k,\quad 0\leq r\leq 1.
$$
Integrating the equation \eqref{eq-zeta} over $[0,1]$ we have
$$
\zeta(1) = \int_0^1 \left(\frac{c_1}{\sqrt{1+\Psi^2(r)}} -b\right)^{1/\alpha} r^{N-1} dr \geq 1,
$$
a contradiction. On the other hand, $\Psi(1;b,b)=0$. So there exists a unique $\tilde{c}=\tilde{c}(b,k)\in (b,c_1)$ such that the solution $\Psi(r;\tilde{c},b)$ of \eqref{psi-TW} satisfies $\Psi(1; \tilde{c},b)=k$.
\hfill $\Box$

\subsection{The case $b>c>0$.}\label{subsec:TSb>c>0}
In this subsection, we focus on the case $b>c>0$. In this case the TS has a concave profile, and so the mean curvature $H<0$. A suitable choice for the power $\alpha>0$ should be as the following:
\begin{center}
For $H<0$, $H^\alpha$ is well defined and $H^{\alpha-1}>0$.
\end{center}
The latter is required to ensure that the equation \eqref{E} is a parabolic one.
Therefore, it is natural to assume
\begin{equation}\label{choice-alpha}
\alpha=\frac{q}{p} \mbox{\ \ for some positive odd integers\ } p,\ q,
\end{equation}
such as $\alpha =3, \frac13$, etc..

We will prove the following main result in this subsection.

\begin{thm}\label{thm:TSb>c>0}
Assume \eqref{choice-alpha} and $b>c>0$. Then the problem \eqref{TS} has a unique solution $\varphi=\Phi(r;c,b)\in C^\infty([0,R_\infty))$ for some $R_\infty = R_\infty (c,b)<\infty$. $\Phi(r;c,b)$ and $\Phi'(r;c,b)$ are strictly increasing in $c$ and $-b$. Moreover, $R_\infty(c,b)$ satisfies
\begin{equation}\label{R-infty-bounds-b>0}
N b^{-1/\alpha} \leq R_\infty (c,b) \leq N (b-c)^{-1/\alpha},
\end{equation}
it is continuously dependent on and strictly increasing in $c$ with
$$
R_\infty(c,b)\to Nb^{-1/\alpha} \mbox{\ \ as\ \ }c\to 0,\quad R_\infty(c,b)\to \infty  \mbox{\ \ as\ \ } c\to b-0.
$$
In addition, $\Phi'(r),\ \Phi''(r)<0$ in $(0,R_\infty)$, and
\begin{equation}\label{phi-psi-b>0}
  \Phi( R_\infty-0) > -\infty,\quad  \Phi'(R_\infty-0) =   \Phi''(R_\infty-0) = -\infty,
\end{equation}
and $\Phi(R_\infty(c,b)-0;c,b)$ is increasing in $c$.
\end{thm}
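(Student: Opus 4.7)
The proof mirrors that of Theorem \ref{thm:TSb<0}(II), with all the signs of $\Phi'$ and $\Phi''$ reversed and with the odd--odd assumption $\alpha=q/p$ used crucially to make $\xi^{1/\alpha}$ well-defined and sign-preserving on negative $\xi$. I would begin with the regularized ODE \eqref{TW1-1} and set $\psi_\varepsilon:=\varphi'_\varepsilon$; since $\psi_\varepsilon'(0)=(c-b)^{1/\alpha}<0$, a sign-reversed variant of Claim 1 in Lemma \ref{lem2.1} gives $\psi_\varepsilon,\psi_\varepsilon'<0$ throughout the maximal existence interval $[0,R_\infty^\varepsilon)$. For the bounds on $R_\infty^\varepsilon$, I would work with $\zeta_\varepsilon:=\psi_\varepsilon/\sqrt{1+\psi_\varepsilon^2}\in(-1,0]$, note the sandwich $-b^{1/\alpha}\le (c\sqrt{1-\zeta_\varepsilon^2}-b)^{1/\alpha}\le -(b-c)^{1/\alpha}<0$ (valid because $\alpha=q/p$ with $p,q$ odd preserves the sign and monotonicity of the power map on negatives), and integrate the analogue $(\zeta_\varepsilon (r+\varepsilon)^{N-1})'=(c\sqrt{1-\zeta_\varepsilon^2}-b)^{1/\alpha}(r+\varepsilon)^{N-1}$ from $0$; using $\zeta_\varepsilon\to -1$ at $R_\infty^\varepsilon$ then yields \eqref{R-infty-bounds-b>0} after sending $\varepsilon\to 0$.

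The monotonicity $\psi_{\varepsilon_1}<\psi_{\varepsilon_2}$ for $\varepsilon_1<\varepsilon_2$ (analogue of Lemma \ref{lem2.2}) and uniform interior $C^m$-estimates (analogue of Lemma \ref{lem2.3}) let me extract a limit $\Phi\in C([0,R_\infty))\cap C^\infty((0,R_\infty))$. The integrated-bound argument of Step 2 in the proof of Theorem \ref{thm:TSb<0} then upgrades $\Phi$ to $C^1$ at $0$ with $\Phi'(0)=0$, and elliptic regularity (Step 3) to $C^\infty$ up to $0$; uniqueness follows from a Gronwall estimate on $\psi_2-\psi_1$ as in Step 4. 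For the endpoint behavior \eqref{phi-psi-b>0}, $\Phi'(R_\infty-0)=-\infty$ is forced by maximality of $R_\infty$, $\Phi''(R_\infty-0)=-\infty$ is obtained by evaluating the twice-differentiated ODE along a sequence where $\Psi''$ vanishes (analogue of Step 6), and $\Phi(R_\infty-0)>-\infty$ follows from the Taylor expansion $\zeta(r)\geq -1+a(R_\infty-r)+O((R_\infty-r)^2)$ near $R_\infty$, with $a:=\frac{1}{2}\bigl(\frac{N-1}{R_\infty}+b^{1/\alpha}\bigr)>0$, which bounds $|\Psi|$ by $(1-a\rho)/\sqrt{a\rho}$ on $[R_\infty-\varepsilon,R_\infty)$---an integrable singularity.

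The strict monotonicity of $\Psi(\cdot;c,b)$ and $\Phi(\cdot;c,b)$ in $c$ and in $-b$ reduces, via the identity $N\Psi'(0)=(c-b)^{1/\alpha}$ and comparison for the first-order ODE satisfied by $\psi_\varepsilon$, to the obvious strict monotonicity of $(c-b)^{1/\alpha}$ in these parameters. Strict monotonicity of $R_\infty(c,b)$ in $c$ (now \emph{increasing}, since raising $c$ weakens the negative driving and lets the profile travel further before its tangent becomes vertical) and the monotonicity of $\Phi(R_\infty-0;c,b)$ in $c$ both come from the shifted-graph/tangential-contact comparisons of Steps 8 and 9, using $\Phi'(R_\infty-0)=-\infty$ to rule out contact at the right endpoint. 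Continuous dependence of $R_\infty(c,b)$ on $c$ is established by the Taylor-expansion comparison of Step 10.

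The main obstacle is the limit $R_\infty(c,b)\to\infty$ as $c\to b^-$: the upper bound in \eqref{R-infty-bounds-b>0} is $N(b-c)^{-1/\alpha}\to\infty$, consistent with this limit but not a proof of it, and the lower bound $Nb^{-1/\alpha}$ does not grow. I would argue by contradiction: if $R_\infty(c,b)\nearrow R^*<\infty$ as $c\nearrow b$, then on each compact $[0,R]\subset [0,R^*)$ the uniform $C^2$-estimates plus the equation \eqref{TW1} let me extract a subsequential limit $\Phi_*$ solving \eqref{TW1} with $c=b$ on $[0,R]$; by Theorem \ref{thm:TSc>b>0}(II), $\Phi_*\equiv 0$. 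Consequently $\zeta(\cdot;c,b)\to 0$ in $L^\infty_{\mathrm{loc}}([0,R^*))$, whereas $\zeta(R_\infty(c,b)-0;c,b)=-1$ for each $c<b$; a quantitative version of the endpoint Taylor expansion above, forcing $\zeta(\cdot;c,b)\leq -\frac{1}{2}$ on an interval of fixed positive length to the left of $R_\infty(c,b)$ uniformly in $c$ close to $b$, then contradicts this uniform convergence. The remaining limit $R_\infty(c,b)\to Nb^{-1/\alpha}$ as $c\to 0^+$ is immediate from the sandwich \eqref{R-infty-bounds-b>0}.
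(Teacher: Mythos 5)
Your proposal mirrors the paper's own strategy, which is exactly what it claims to do: the paper's proof of this theorem is a one-paragraph deferral to the proof of Theorem~\ref{thm:TSb<0}(II) ("Steps 1--8 and Step 10 are similar (may be with opposite monotonicity). Only Step 9 requires modification"), followed by the modified Step~9 establishing $\Phi(R_\infty-0)>-\infty$. Your Step-by-Step translation of the construction, the $C^1$/$C^\infty$ upgrade, uniqueness, the endpoint blow-up of $\Phi'$ and $\Phi''$, and the shifted-graph comparisons for the monotonicity of $R_\infty$ and of $\Phi(R_\infty-0)$ in $c$ are all in line with the paper.

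However, there is a concrete sign error in your endpoint expansion. From \eqref{eq-zeta} one finds $\zeta'(R_\infty-0)=a_1:=\frac{N-1}{R_\infty}-b^{1/\alpha}$, and since $\Psi'<0$ forces $\zeta'<0$, the argument first shows $a_1<0$, i.e.\ $b^{1/\alpha}>\frac{N-1}{R_\infty}$. The correct half-slope is therefore $\tilde a=\frac{1}{2}\bigl(b^{1/\alpha}-\frac{N-1}{R_\infty}\bigr)$ (the paper uses $\tilde a=\frac{1}{2}\bigl[(b-\delta)^{1/\alpha}-\frac{N-1}{R_\infty}\bigr]$ to absorb the $c\sqrt{1-\zeta^2}$ term). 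Your $a=\frac{1}{2}\bigl(\frac{N-1}{R_\infty}+b^{1/\alpha}\bigr)$ with a plus sign exceeds the true first-order coefficient whenever $\frac{N-1}{R_\infty}>\frac13 b^{1/\alpha}$, and then the claimed lower bound $\zeta(r)\geq-1+a(R_\infty-r)$ is simply false near $R_\infty$. The rest of the integrability argument goes through once $a$ is fixed. Along the same lines, you should also record why $a_1\neq 0$ — the paper rules out $a_1=0$ because it would make $R_\infty$ independent of $c$, contradicting the strict monotonicity from Step~8.

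One point where you genuinely supply what the paper leaves implicit is the limit $R_\infty(c,b)\to\infty$ as $c\to b^-$; this is the analogue of Step~11 in the $b<0$ proof, and the paper never addresses it in the $b>0$ section. Your compactness/uniqueness argument (extract a subsequential limit $\Phi_*$ solving \eqref{TW1} with $c=b$, so $\Phi_*\equiv 0$ by Theorem~\ref{thm:TSc>b>0}(II), then contradict the fact that $\zeta\to-1$ at the finite accumulation point $R^*$) is a sound route and arguably cleaner than directly adapting the paper's Step~11, which was tuned to the different limit $c\to\infty$. However, the tool you need for the last contradiction is not "a quantitative version of the endpoint Taylor expansion" (which gives a lower bound $\zeta\geq-1+a\rho$), but rather a uniform upper bound on $|\zeta'|$ near $R_\infty$ — directly readable off \eqref{eq-zeta} away from $r=0$ — yielding $\zeta(r)\leq-1+C_0(R_\infty-r)$ and hence $\zeta\leq-\tfrac12$ on an interval of length $\geq 1/(2C_0)$ uniform in $c$ near $b$.
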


\begin{proof}
The proof is parallel to that of Theorem \ref{thm:TS-in-whole} (ii) in the previous section. In particular, Steps 1-8 and Step 10 are similar (may be with opposite monotonicity). Only Step 9 requires modification.

\medskip
\noindent
{\bf Modified Step 9. To show $\Phi(R_\infty(c,b)-0;c,b)>-\infty$ and it is increasing in $c$ and $-b$.} The monotonicity in $c$ and $-b$ can be proved in a similar way as in Step 9 before. We only prove $\Phi(R_\infty(c,b)-0;c,b)>-\infty$.

Taking limit as $r\to R_\infty -0$ we have
$$
\zeta(r)\to -1 \quad \mbox{and} \quad \zeta'(r)\to  a_1:= \frac{N-1}{R_\infty(c,b)} -b^{1/\alpha}.
$$
The latter follows from the equation \eqref{eq-zeta}. By $\Psi'(r)<0$ we have $a_1\leq 0$. Since $a_1 =0$ implies that $R_\infty(c,b)$ is independent of $c$ which contradicts the conclusion in Step 8, we have $a_1 <0$, and so, for some small $\delta>0$ we have
$$
2 \tilde{a} := (b-\delta)^{1/\alpha} - \frac{N-1}{R_\infty(c,b)} >0.
$$
Integrating \eqref{eq-zeta} over $[r,R_\infty)$ we have
$$
\zeta(r) =  -\frac{R_\infty^{N-1}}{r^{N-1}} +\frac{1}{r^{N-1}} \int^{R_\infty}_r \left( b-c \sqrt{1-\zeta^2(s)}\right)^{{1/\alpha}}s^{N-1}ds
$$
As $r\to R_\infty -0$ we have $c\sqrt{1-\zeta^2(r)} < \delta$, and
$$
\frac{R_\infty^{N-1}}{r^{N-1}} = 1 + (N-1)\frac{R_\infty -r}{R_\infty} +  o(R_\infty -r).
$$
Thus, as $r\to R_\infty -0$, there holds
\begin{eqnarray*}
\zeta(r) & \geq &  -1 -\frac{N-1}{R_\infty} (R_\infty -r) + \frac{(b-\delta)^{1/\alpha}}{N r^{N-1}} (R_\infty^N -r^N) + o(R_\infty -r)\\
& \geq & -1 + \left[(b-\delta)^{1/\alpha}-\frac{N-1}{R_\infty}\right] (R_\infty -r) + o(R_\infty -r).
\end{eqnarray*}
This implies that, for some $\varepsilon>0$,
$$
\zeta(r) \geq  -1 + \tilde{a}(R_\infty -r),\quad R_\infty -\varepsilon \leq r <R_\infty,
$$
and so,
$$
\Psi(r) \geq - \frac{1}{\sqrt{\tilde{a} (R_\infty -r)}}, \quad R_\infty -\varepsilon \leq r< R_\infty.
$$
Integrating it over $[R_\infty -\varepsilon, R_\infty)$ we obtain
$\Phi(R_\infty -0;c,b)>-\infty$.

This completes the proof of the theorem.
\end{proof}

\medskip
Now we select some of them to be a TS in the cylinder satisfying the boundary condition $\varphi'(1)=k$.

\medskip
\noindent
{\it Proof of Theorem \ref{thm:TS-in-cylinder} in case {\rm (d)} holds.}
In case $k=-\infty$, the condition $b^{{1/\alpha}} \sqrt{1+k^2}> -k N$ reduces to
$R_\infty (0,b)= Nb^{-1/\alpha} <1$. Since $R_\infty (c,b)$ is continuously dependent on and strictly increasing in $c$, and $R_\infty(b,b)=\infty$, there exists a unique $c=c_1(b)$ such that $R_\infty(c_1(b),b)=1$, or, equivalently, $\Psi (1;c_1(b),b)=-\infty$.

In case $k\in (-\infty, 0)$, as in the proof of Theorem \ref{thm:TS-in-cylinder} (b) in Subsection 2.2, there exists a unique $\tilde{c}(b,k)>0$ such that $\Phi'(1;\tilde{c},b)=k$. This proves the existence of TS of \eqref{TS}-\eqref{TS-BC} for $b>0$ and $k\in [-\infty,0)$.
\hfill $\Box$



\section{Asymptotical Stability of the TSs: the Case $b\leq 0$}

In this section we study the asymptotical stability for the TSs of \eqref{E} with $b\leq 0$ in the cylinder $\Omega$ and with prescribed boundary angles:
\begin{equation}\label{BC1}
u_r(0,t)=0,\quad u_r(1,t)=k,\quad t>0.
\end{equation}
Since all the TSs in case $b\leq 0$ have convex profiles, it is natural to require the boundary slope is positive: $k>0$ and to consider the solutions of \eqref{E}-\eqref{BC1} with positive mean curvature:
\begin{equation}\label{def-H}
H(r,t) = \frac{u_{rr}}{(1+u_r^2)^{3/2}} + \frac{(N-1) u_r}{r \sqrt{1+u_r^2}}>0,\quad r\in [0,1].
\end{equation}
Note that, under this condition, $u_{rr}>0$ is not necessarily to be true, but $u_r>0$ is true (see details in the proof of Theorem \ref{thm:est-b<0}). In the first subsection we will give some a priori estimates, including the {\it uniform positive lower and upper bounds for $H$}. In the second subsection we prove the asymptotic stability of the TS, that is, any solution of the initial boundary value problem converges to the TS.

\subsection{A priori estimates}

To derive the global existence, one needs the $C^2$ estimate for the solutions of fully nonlinear parabolic equations. We will see below that, the $C^0$ and $C^1$ estimates are direct consequences of the maximum principle, as long as the equation is a parabolic one. However, the $C^2$  estimates and the parabolicity itself is not so obvious. Furthermore, considering our final aim is to show the convergence of the solution to the TS, we even need {\it uniform-in-time} estimates for $u_r, u_{rr}$, as well as the {\it uniform-in-time positive lower and upper bounds for $H$}. In other words, the estimate like $H>0$, though it is enough in studying the global existence, but it is not enough to give the convergence of the solution to TS. In this part we will give required uniform estimates by using the maximum principle.

\begin{thm}\label{thm:est-b<0}
Assume $u(r,t)$ is a classical solution, in the time interval $[0,T)$, of the problem \eqref{E}-\eqref{BC1} with initial data as in \eqref{cond-IC}. Assume also $b\leq 0<k$, $(-b)^{1/\alpha}\sqrt{1+k^2}<kN$ and
\begin{equation}\label{cond-H-b<0<k}
H^\alpha (r,0)+b>0,\quad r\in [0,1].
\end{equation}
Then there exist positive constants $M_0, M_1, M_2, V_*, V^*, H_*, H^*$, independent of $t$ and $T$, such that
\begin{eqnarray}
|u(r,t) - \tilde{c}t|\leq M_0,\quad r\in [0,1],\ t\in [0,T);\\
0< u_r(r,t) \leq M_1,\quad r\in (0,1],\ t\in [0,T);\\
V_* \leq u_t(r,t) \leq V^*,\quad r\in [0,1],\ t\in [0,T);\\
H_* \leq H(r,t) \leq H^*, \quad r\in [0,1],\ t\in [0,T);\\
|u_{rr}(r,t)|\leq M_2,\quad r\in [0,1],\ t\in [0,T),
\end{eqnarray}
where $\tilde{c}$ is the speed of the TS in Theorem \ref{thm:TS-in-cylinder}.
\end{thm}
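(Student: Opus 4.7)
My plan is to use the translating solution $\widetilde\Phi(r)+\tilde c\, t$ from Theorem \ref{thm:TS-in-cylinder} as a global moving barrier, combined with maximum-principle arguments applied to the linear parabolic equations satisfied by $u_t$ and $u_r$ (obtained by formally differentiating \eqref{E} in $t$ and in $r$, respectively). The principal coefficient of those linearized equations is $\alpha H^{\alpha-1}/(1+u_r^2)$, which is positive exactly when $H>0$; so the whole scheme requires that the initial positivity $H^\alpha(r,0)+b>0$ be propagated in time, and I will handle this by a continuation (bootstrap) argument.

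\textbf{Order of the estimates.} (i) For the $C^0$ bound, both $u$ and $\widetilde\Phi(r)+\tilde c\, t\pm C$ solve \eqref{E} and satisfy the \emph{same} (homogeneous) Neumann conditions \eqref{BC1}, so choosing $C$ so large that the initial data is sandwiched and applying the comparison principle gives $|u(r,t)-\tilde c\, t|\le M_0$. (ii) Differentiating \eqref{E} in $t$, $v:=u_t$ satisfies a linear parabolic equation whose boundary conditions, obtained by differentiating \eqref{BC1} in $t$, are the homogeneous Neumann pair $v_r(0,t)=v_r(1,t)=0$; the maximum principle then yields
\[
V_*:=\min_{r\in[0,1]}\bigl[H^\alpha(r,0)+b\bigr]\sqrt{1+u_0'(r)^2}\ \le\ u_t\ \le\ V^*,
\]
with $V_*>0$ by the hypothesis and compactness. (iii) Differentiating \eqref{E} in $r$, $w:=u_r$ satisfies
\[
w_t=\frac{\alpha H^{\alpha-1}}{1+u_r^2}\,w_{rr}+F_{u_r}\,w_r-\frac{\alpha(N-1)H^{\alpha-1}}{r^2}\,w,
\]
a linear parabolic equation with \emph{non-positive} zero-order coefficient and Dirichlet data $w(0,t)=0,\ w(1,t)=k$; a positive interior maximum would force $w_t<0$, so the maximum principle gives $u_r\le M_1:=\max\{k,\|u_0'\|_\infty\}$. (iv) The identity $H^\alpha+b=u_t/\sqrt{1+u_r^2}$ combined with (ii) and (iii) immediately produces
\[
0<H_*:=\bigl(V_*/\sqrt{1+M_1^2}-b\bigr)^{1/\alpha}\ \le\ H\ \le\ H^*:=(V^*-b)^{1/\alpha},
\]
where positivity of $H_*$ uses $V_*>0$ and $b\le 0$. (v) The radial divergence identity $\bigl(r^{N-1}u_r/\sqrt{1+u_r^2}\bigr)'=r^{N-1}H(r,t)$, integrated from $0$, simultaneously gives $u_r>0$ on $(0,1]$ and the uniform bound $u_r/r\le H^*\sqrt{1+M_1^2}/N$; solving $u_{rr}=(1+u_r^2)^{3/2}H-(N-1)u_r(1+u_r^2)/r$ then yields the bound on $u_{rr}$.

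\textbf{Main obstacle.} The central difficulty is the circular interplay between the parabolicity hypothesis $H>0$ (needed to justify the maximum principle in (ii) and (iii)) and the conclusion $H\ge H_*>0$ (produced by (iv)). This is resolved by a standard continuation argument: let $T^*\in(0,T]$ be the supremum of times on which $H>0$ throughout $[0,1]$; since $H(\cdot,0)>0$ by hypothesis, $T^*>0$, and the chain (i)--(v) applied on $[0,T^*)$ delivers a uniform bound $H\ge H_*>0$ with $H_*$ independent of $T^*$, so continuity forces $T^*=T$. A secondary technical point is the singularity of the $1/r^2$ factor in the equation for $w=u_r$; this enters with the sign that \emph{helps} the maximum principle, and its behaviour as $r\to 0$ is controlled by the compatibility $u_r(0,t)=0$ together with the smoothness $u_0\in C^2([0,1])$.
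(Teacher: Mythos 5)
Your proposal is correct and follows essentially the same route as the paper: comparison with the translating solution for the $C^0$ bound, maximum principles for the differentiated equations satisfied by $u_t$ and $u_r$, recovery of the two-sided bounds on $H$ from the identity $u_t=(H^\alpha+b)\sqrt{1+u_r^2}$ (with $b\le 0$ giving $H_*>0$), the bound on $u_{rr}$ from the definition of $H$ together with control of $u_r/r$, and a continuation argument to propagate uniform parabolicity from $[0,T_0)$ to $[0,T)$. The only differences are cosmetic: the paper differentiates the Cartesian equation \eqref{eq0} to sidestep the singular coefficients at $r=0$ and proves $u_r>0$ by examining the smallest zero of $u_r$, while you work radially (handling the singular terms by sign) and use the divergence identity $\bigl(r^{N-1}u_r/\sqrt{1+u_r^2}\bigr)'=r^{N-1}H$, which yields $u_r>0$ and the $u_r/r$ bound in one stroke.
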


\begin{proof}
By \eqref{cond-H-b<0<k}, the mean curvature $H(r,t)$ remains positive in $[0,T_0)$ for some $T_0\leq T$. We first show that all the inequalities hold in this time interval.

Since $H(r,t)>0$ for $t\in [0,T_0)$, the equation \eqref{eq0} or \eqref{E} is a parabolic one. So, for some large $M'_0>0$, the TS $\wt{\Phi}(r)+ \tilde{c}t+ M'_0$ (which exists by $(-b)^{1/\alpha}\sqrt{1+k^2}<kN$) is an upper solution of initial-boundary value problem \eqref{E}-\eqref{BC1} with $u(r,0)=u_0(r)$. Similarly, $\wt{\Phi}(r)+ \tilde{c} t -M'_0$ is a lower solution. (If one worries about the unboundedness of the coefficient $\frac{N-1}{r}$ in the equation \eqref{E}, he/she can use the equation \eqref{eq0} instead of \eqref{E} to proceed the comparison.) The estimates (4.4) then follows from the comparison principle, for some $M_0$ depending on $b,k$ and $\|u_0\|_C$.

We next derive the $C^1$ bound by using the equation \eqref{eq0}.
(The reason we use the equation \eqref{eq0} instead of the radially symmetric form \eqref{E} is to avoid the possible unboundedness of the coefficients  involving $\frac{u_r}{r}$. By further estimates like $u_r \leq K r$, which will be shown below, we see that $\frac{u_r}{r}$ is actually bounded.)  Denote
\begin{equation}\label{def-xi-A}
\xi:= \sqrt{1+|Du|^2} = \sqrt{1+u_r^2},\quad A:= \delta_{ij} -\frac{D_i u D_j u}{\xi^2}.
\end{equation}
Then the equation \eqref{eq0} is
\begin{equation}\label{eq00}
u_t = (H^\alpha +b)\xi = \left( \frac{A}{\xi}D_{ij} u\right)^{\alpha} \xi + b\xi,\quad |x|<1,\ t\in [0,T).
\end{equation}
Denote $w_1 (x,t):= u_{x_1} (x,t)$, then
$$
w_{1t} = \alpha A H^{\alpha -1} D_{ij}w_1 + B_1 DuDw_1 + B_2 D_{ij} u D_i uD_j w_1,\quad |x|<1,\ t\in [0,T_0),
$$
where
$$
B_1 := \frac{-\alpha H^{\alpha-1} \xi^2 A D_{ij} u  + 2\alpha H^{\alpha-1} D_i u D_j u D_{ij}u +(H^\alpha +b)\xi^3}{\xi^4},\quad B_{2}:=  - \frac{2\alpha H^{\alpha-1}}{\xi^2}.
$$
The boundary conditions \eqref{BC1} imply that
$$
w_1 (x,t)= \left. u_r \frac{x_1}{r}\right|_{r=1} = kx_1,\quad |x|=1,\ t\in [0,T_0).
$$
Then, using the maximum principle for $w_1$ we conclude that
$$
|w_1(x,t)| \leq M_1 := \|u'_0\|_C ,\quad |x|\leq 1,\ t\in [0,T_0).
$$
Hence,
\begin{equation}\label{bound-ux1}
|u_r(r,t)| = |w_1 (r,0,\cdots, 0,t)|\leq M_1,\quad r\in [0,1],\ t\in [0,T_0).
\end{equation}
By $H>0$ we have $H(0,t)=Nu_{rr}(0,t)>0$. Hence $u_r(r,t)>0$ for small $r>0$. Though we can not derive $u_{rr}>0$, we do have the conclusion $u_r>0$ for $r\in (0,1], t\in [0,T_0)$. Otherwise, assume, for some $t_1\in [0,T_0)$, there exists $r_1\in (0,1)$ such that it is the smallest positive zero of $u_r(\cdot,t_1)$, then
$u_{rr}(r_1, t_1)\leq 0$, and so
$$
H(r_1,t_1) = \left. \frac{u_{rr}}{(1+u^2_r)^{3/2}} +\frac{(N-1)u_r}{r\sqrt{1+u^2_r}} \right|_{(r_1, t_1)} \leq 0,
$$
contradicts our assumption. Hence, the estimate \eqref{bound-ux1} can be improved to (4.5).

Similarly, if we write $w (x,t):= u_t(x,t)$, then
$$
w_t = \alpha A H^{\alpha -1} D_{ij}w + B_3 DuDw + B_2 D_{ij} u D_i uD_j w,\quad |x|<1,\ t\in [0,T_0),
$$
where $B_2$ is as above and
$$
B_3 := \frac{\alpha H^{\alpha-1} \xi^2 AD_{ij} u + 2\alpha H^{\alpha-1} D_i u D_j u D_{ij}u +(H^\alpha +b)\xi^3}{\xi^4}.
$$
The boundary condition \eqref{bdry0} implies that
$$
D w \cdot \nu =0,\quad |x|=1,\ t\in [0,T_0),
$$
since $u$ is radially symmetric. Then, using the maximum principle for $w$ we have
\begin{equation}\label{bounds-ut-0}
\min u_t(r,0) \leq w(x,t) = u_t(x,t) \leq \max u_t(r,0),\quad r\in [0,1],\ t\in [0,T_0).
\end{equation}
Denote
$$
\xi^0_0 := \max \xi(r,0) = \sqrt{1+[\max u'_0(r)]^2},\quad \xi^0 := \sqrt{1+M_1^2} \geq \xi(r,t).
$$
Then we have
\begin{equation}\label{bounds-ut}
0< V_* := \min H^\alpha(r,0)+b \leq u_t= (H^\alpha +b)\xi \leq V^* := [\max H^\alpha(r,0)+b]\xi^0_0
\end{equation}
for $r\in [0,1]$ and $t\in [0,T_0)$, and so
\begin{equation}\label{bounds-H}
H_* := \left( \frac{V_*}{\xi^0} -b\right)^{1/\alpha} \leq H(r,t)\leq H^*:= \left( V^* -b\right)^{1/\alpha},\quad r\in [0,1],\ t\in [0,T_0).
\end{equation}
This proves (4.7) (note that $H_*>0$ due to $b\leq 0$).

Furthermore, we can use the second inequality in \eqref{bounds-H} to conclude that
$$
u_{rr} \leq H^* \xi^3 -\frac{(N-1)u_r \xi^2}{r}\leq H^* (\xi^0)^3,\quad r\in [0,1],\ t\in [0,T_0).
$$
This implies that
$$
u_r(r,t) = \int_0^r u_{rr}(r,t) \leq H^* (\xi^0)^3 r,\quad r\in [0,1],\ t\in [0,T_0).
$$
On the one hand, this implies that
\begin{equation}\label{bound-ur-r}
0\leq \frac{u_r}{r}\leq H^*(\xi^0)^3,\quad r\in [0,1],\ t\in [0,T_0),
\end{equation}
and so the radially symmetric equation \eqref{E} in fact has no singularity at $r=0$. On the other hand, by the first inequality in \eqref{bounds-H} we have
\begin{equation}\label{lower-bound-urr}
u_{rr} \geq H_* \xi^3 -\frac{(N-1)u_r \xi^2}{r}\geq H_* - (N-1)H^* (\xi^0)^5,\quad r\in [0,1],\ t\in [0,T_0).
\end{equation}
This proves the bound in (4.8) for $u_{rr}$.

By the above results, especially, the positive lower and upper bounds for $H$, we see that the equation is a uniform parabolic one till the time $T_0$. So the interval $[0,T_0)$ where $H$ remains to be positive can be extended to $[0,T)$. All the estimates we have proved remain valid in $[0,T)$.

This prove the theorem.
\end{proof}

\subsection{Stability of the TS}
Based on the a priori estimate we can prove the following stability result of TS in case $b\leq 0$

\begin{thm}\label{thm:stability-TS-b<0}
Assume $b\leq 0<k$, $(-b)^{1/\alpha}\sqrt{1+k^2}<kN$ and \eqref{cond-H-b<0<k} holds. Then the problem \eqref{E}-\eqref{BC1} with initial data $u(r,0)=u_0(r)$ has a unique global classical solution $u(r,t)$, all the estimate results in the previous theorem hold, and the limit \eqref{asy-stable} holds.
\end{thm}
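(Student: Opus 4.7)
The plan is to combine the uniform-in-time a priori bounds of Theorem \ref{thm:est-b<0} with a continuation argument and a sliding-TS comparison. I organize the proof into three steps.

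\emph{Step 1: global existence and uniqueness.} Assumption \eqref{cond-H-b<0<k} makes \eqref{E} uniformly parabolic near $t=0$, so standard Schauder theory for fully nonlinear parabolic equations (Lieberman, Krylov) provides a classical solution $u\in C^{2+\beta,1+\beta/2}$ on some maximal interval $[0,T_{\max})$. On that interval Theorem \ref{thm:est-b<0} applies and furnishes uniform-in-$t$ bounds on $u-\tilde{c}\,t$, $u_r$, $u_t$ and $u_{rr}$, together with the strict positive lower bound $H(r,t)\ge H_*>0$. The latter keeps the equation uniformly parabolic up to $t=T_{\max}$; combined with uniform H\"older regularity of $u_t$ and $u_{rr}$ (via Krylov--Safonov), this permits passing to the limit $t\to T_{\max}^-$, using the limiting profile as fresh initial data, and reapplying the local existence theorem. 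Maximality of $T_{\max}$ is thereby contradicted unless $T_{\max}=\infty$. Uniqueness follows from the comparison principle applied to the linear uniformly parabolic equation satisfied by the difference of two putative solutions with equal initial and Neumann data.

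\emph{Step 2: sliding and monotone quantities.} The problem \eqref{E}-\eqref{BC1} is autonomous and invariant under vertical translation, so for every $s\in\R$ the function $W_s(r,t):=\wt{\Phi}(r)+\tilde{c}(b,k)\,t+s$ is itself a classical solution. Set
\begin{equation*}
s^+(t):=\max_{r\in[0,1]}\bigl[u(r,t)-\wt{\Phi}(r)-\tilde{c}(b,k)\,t\bigr],\qquad s^-(t):=\min_{r\in[0,1]}\bigl[u(r,t)-\wt{\Phi}(r)-\tilde{c}(b,k)\,t\bigr].
\end{equation*}
The uniform bound on $u-\tilde{c}(b,k)\,t$ from Theorem \ref{thm:est-b<0} together with the boundedness of $\wt{\Phi}$ on $[0,1]$ makes both $s^+$ and $s^-$ uniformly bounded in $t$. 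For any $t_0\ge 0$, by definition of $s^+$ one has $u(\cdot,t_0)\le W_{s^+(t_0)}(\cdot,t_0)$ on $[0,1]$; since $u$ and $W_{s^+(t_0)}$ solve the same uniformly parabolic equation and share the Neumann data $u_r(0,t)=0$, $u_r(1,t)=k$, the comparison principle forces $u\le W_{s^+(t_0)}$ for all $t\ge t_0$, whence $s^+(t)\le s^+(t_0)$. The symmetric argument gives $s^-(t)\ge s^-(t_0)$. Thus $s^+$ is non-increasing and $s^-$ is non-decreasing.

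\emph{Step 3: conclusion and main obstacle.} Monotonicity and boundedness yield the existence in $\R$ of $M^+:=\lim_{t\to\infty}s^+(t)$ and $M^-:=\lim_{t\to\infty}s^-(t)$. Since
\begin{equation*}
\|u(\cdot,t)-\wt{\Phi}(\cdot)-\tilde{c}(b,k)\,t\|_{L^\infty([0,1])}=\max\bigl(s^+(t),\,-s^-(t)\bigr),
\end{equation*}
the desired \eqref{asy-stable} follows with $M:=\max(M^+,-M^-)$. The main technical obstacle I anticipate sits inside Step 1: justifying classical Schauder regularity and the comparison principle in spite of the singular coefficient $(N-1)/r$ in \eqref{E} at the axis $r=0$. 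The remedy, already adopted throughout the proof of Theorem \ref{thm:est-b<0}, is to work in Cartesian form \eqref{eq0}, where no coordinate singularity occurs and the Neumann condition \eqref{bdry0} is standard; radial symmetry is preserved by that equation, so every estimate and every comparison transfers back to the radial formulation without loss.
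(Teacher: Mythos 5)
Your three-step plan is correct and proves the literal statement \eqref{asy-stable}, but via a genuinely different and more elementary route than the paper. The paper works with the $\omega$-limit set of $\tilde u := u-\tilde c\,t$: it uses the uniform $C^{2+\beta,1+\beta/2}$ bounds to extract a compact, connected, invariant $\omega$-limit set, then applies the strong comparison principle twice — first to show that every $\omega$-limit is a vertical shift $\wt\Phi+M$ of the TS profile, then to show the shift constant $M$ is unique. Your Step~2 replaces this with a clean monotone-sliding argument: $s^+(t)$ is non-increasing and $s^-(t)$ is non-decreasing by comparison with the one-parameter family $W_s$, so both converge and the norm stabilizes. This is shorter and avoids invoking the $\omega$-limit machinery altogether; the observation that the trapping constants $s^\pm$ are themselves monotone is a nice economy.

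Be aware, however, of a real difference in the strength of the two conclusions. The paper's proof shows $\tilde u(\cdot,t)\to\wt\Phi(\cdot)+M$ uniformly — a single shift constant $M$ — which is what ``the TS is asymptotically stable'' really means and is what the authors intend when they write \eqref{asy-stable}. Your Step~3 only yields convergence of $s^+$ to some $M^+$ and $s^-$ to some $M^-$; it does not rule out $M^+>M^-$, in which case $\tilde u-\wt\Phi$ keeps oscillating between two distinct shifted profiles even though its sup-norm stabilizes. To close this gap you would use the compactness you already have from Step~1: any $C^2$-subsequential limit $U$ of $\tilde u(\cdot,t_n)$ satisfies $\max(U-\wt\Phi)=M^+$, $\min(U-\wt\Phi)=M^-$, and evolving $U$ by the flow (which stays in the $\omega$-limit set) would, by the strong maximum principle in the Cartesian form \eqref{eq0}, make $\max(U(\cdot,1)-\wt\Phi)<M^+$ unless $U-\wt\Phi$ is constant. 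Thus $M^+=M^-$, and the uniform convergence $\tilde u\to\wt\Phi+M$ follows. Adding this one paragraph would make your argument fully match the paper's conclusion.

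Your remark at the end about working in the Cartesian form \eqref{eq0} to sidestep the $(N-1)/r$ singularity, both for the Schauder/Krylov estimates and for the comparison principle, is exactly the device the paper uses throughout Section~4 and is necessary for the maximum principle step above as well.
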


\begin{proof}
Using the assumption \eqref{cond-H-b<0<k} one can obtain the local existence for the classical solution $u$.  Then using the a priori estimates given in the previous theorem one can obtain the global solution by the parabolic theory.

Next, we study the asymptotic stability of TS, or, the convergence of $u$ to the TS. Denote
$$
\tilde{u}(r,t):= u(r,t) - \tilde{c}(b,k)t,\quad r\in [0,1],\ t\geq 0.
$$
By $|\tilde{u}(r,t)|\leq M_0$, the uniform-in-time estimates for $u_r,\ u_{rr}$ and $H$, we can use the theory for fully nonlinear uniform parabolic equation to conclude that, for any given $\beta\in (0,1)$,
$$
\|\tilde{u}(r,t)\|_{C^{2+\beta, 1+\beta/2} ([0,1]\times [t_1, t_2])}\leq C,\quad 0<t_1 <t_2,
$$
for some $C$ independent of $t_1,t_2$.  Therefore, the $\omega$-limit set of $\tilde{u}$ is non-empty, connected, invariant and compact. To finish the proof we only need to show that the $\omega$-limit set is a singleton $\wt{\Phi}(r)+M$ for some $M\in \R$.

We first show that any $\omega$-limit of $\tilde{u}$ is a shift of $\wt{\Phi}(r)$. Assume, for example,
there exist a time sequence $\{t_n\}$ and a function $U(r,t)\in C^{2+\beta,1+\beta/2}([0,1]\times (0,\infty))$ such that
$$
\tilde{u}(r,t_n +t) -U(r,t) \to 0 \mbox{\ \ as\ \ }n\to \infty,
$$
in the topology of $C^{2+\beta', 1+\beta'/2}_{loc}([0,1]\times \R)$ for  $\beta'\in (0,\beta)$. We show that $U(r,t)\equiv \wt{\Phi}(r)+M$ for some $M\in \R$. Otherwise, assume by contradiction that
\begin{equation}\label{U-leq=}
\wt{\Phi}(r)+M_1 \preceq U(r,0) \preceq \wt{\Phi}(r)+M_2,
\end{equation}
for some $M_2 >M_1$. Here, we write $f_1(r)\preceq f_2(r)$ if $f_1(r)\leq f_2(r)$ and \lq\lq equality" does hold at some point. Since both $U$ and $\wt{\Phi}(r)+M_2$ satisfy the equation of $\tilde{u}$ and the boundary condition \eqref{BC1}, by the strong comparison principle we have
$$
U(r,1) < \wt{\Phi}(r)+M_2 - 2\varepsilon,\quad r\in [0,1],
$$
for small $\varepsilon>0$. Since $\tilde{u}(r,t_n +1)\to U(r,1)$ as $n\to \infty$, for large $n_0$, there holds
$$
\tilde{u}(r,t_n +1) <U(r,1)+\varepsilon < \wt{\Phi}(r)+M_2 -\varepsilon,\quad r\in [0,1],\ n\geq n_0.
$$
Using comparison again we have
$$
\tilde{u}(r,t) \leq \wt{\Phi}(r)+M_2 -\varepsilon,\quad r\in [0,1],\ t\geq t_{n_0}+1.
$$
By $\tilde{u}(r,t_n)\to U(r,0)$ we have
$$
U(r,0)\leq \wt{\Phi}(r) +M_2 -\varepsilon,
$$
contradicts the second inequality in \eqref{U-leq=}.  This proves that any $\omega$-limit of $\tilde{u}$ is $\wt{\Phi}(r)+M$ for some $M\in \R$.

Finally, we show that $\omega$-limit set is a singleton, that is, the above $M$ is unique. Otherwise, there exists $\epsilon>0$ (the case $\epsilon<0$ is proved similarly) such that both $\wt{\Phi}(r)+M$ and $\wt{\Phi}(r)+M+2\epsilon$ are $\omega$-limits of $\tilde{u}$. So, there exists large $T$ such that
$$
\|\tilde{u}(r,T) - \wt{\Phi}(r)-M\|_{C^2 ([0,1])}\leq \epsilon,
$$
which, by comparison, implies that
$$
\tilde{u}(r,t) \leq  \wt{\Phi}(r)+ M + \epsilon,\quad r\in [0,1],\ t\geq T.
$$
This, however,  contradicts the assumption that $\wt{\Phi}(r)+M + 2\epsilon$ is also an $\omega$-limit of $\tilde{u}$.

This completes the proof of the theorem.
\end{proof}

\section{Asymptotical Stability of the TSs: the Case $b>0$}
This section deals with the case $b>0$. In this case, the profile of a TS can be a convex one when $b>0,\ k>0$, or a concave one when $b>0>k$. As in the previous section, the key step is to give the {\it uniform positive lower  bound of $H$} in the former case, and the {\it uniform negative upper bound of $H$} in the latter case. However, unlike deriving $H\geq H_*>0$ in \eqref{bounds-H} from \eqref{bounds-ut} due to $b\leq 0$, in the case where  $b>0$ we have to
adopt other approach to derive the positive lower (or negative upper) bound for $H$. We will consider the case $b>0, k>0$ and $b>0>k$ respectively in two subsections. We do not consider the case $b>0=k$, since the profile of the TS in this case is a horizontal plane, whose mean curvature is $0$, and so the corresponding parabolic equation \eqref{E} is a degenerate one. Of course, in case $\alpha=1$, the equation is a quasilinear one and we can consider the case $b>0=k$. Similar argument as in this and the previous sections leads to the result: any solution of \eqref{E}-\eqref{BC1} converges to the TS with flat profile.

\subsection{The case $b>0, k>0$}
As above, we first derive the a priori estimates. Assume $u(r,t)$ is a classical solution in the time interval $[0,T)$, of the problem \eqref{E}-\eqref{BC1} with $k>0$ and with initial data satisfying \eqref{cond-IC}.
We will derive the a priori estimates as that in Theorem \ref{thm:est-b<0}
by several lemmas.

On the $C^1$ estimates we have the following results.

\begin{lem}\label{lem:c>b>0-C1}
Assume $b>0,\ k>0$, and $H(r,t)>0$ in $[0,T_0)$ for some $T_0\leq T$. Then there exist positive constants $M_0, M_1, M_2, V_*, V^*, H^*$, independent of $t$ and $T_0$, such that
\begin{eqnarray*}
|u(r,t) - \tilde{c}(b,k)t|\leq M_0,\quad r\in [0,1],\ t\in [0,T_0);\\
0< u_r(r,t) \leq M_1,\quad r\in (0,1],\ t\in [0,T_0);\\
V_* \leq u_t(r,t) \leq V^*,\quad r\in [0,1],\ t\in [0,T_0);\\
0<  H(r,t) \leq H^*, \quad r\in [0,1],\ t\in [0,T_0);\\
|u_{rr}(r,t)|\leq M_2,\quad r\in [0,1],\ t\in [0,T_0).
\end{eqnarray*}
\end{lem}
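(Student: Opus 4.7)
The plan is to mirror the proof of Theorem \ref{thm:est-b<0}, with the crucial caveat that \emph{no} positive lower bound on $H$ will come out of this argument; producing such a bound is what Lemmas 5.2--5.5 and Proposition 5.5 are designed for, and will be the main obstacle discussed below. First I would establish the $C^0$ estimate by comparison. The TS $\wt\Phi(r)+\tilde c(b,k)t$ exists by Theorem \ref{thm:TS-in-cylinder}(c), and the shifted functions $\wt\Phi(r)+\tilde c(b,k)t\pm M_0'$ are a super- and a sub-solution of \eqref{E}-\eqref{BC1} for $M_0'$ large depending on $\|u_0\|_C$. Using the non-singular form \eqref{eq0} to avoid any worry about the coefficient $(N-1)/r$, the strong comparison principle yields $|u(r,t)-\tilde c(b,k)t|\le M_0$ on $[0,1]\times[0,T_0)$.

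For the $C^1$ estimate the hypothesis $H>0$ on $[0,T_0)$ makes \eqref{eq00} uniformly parabolic there. Differentiating \eqref{eq00} in $x_1$ gives a linear parabolic equation for $w_1:=u_{x_1}$; the boundary condition $u_r(1,t)=k$ becomes $w_1(x,t)=kx_1$ on $|x|=1$, and the maximum principle yields $|u_r|\le M_1:=\|u_0'\|_C$. To upgrade this to strict positivity of $u_r$ on $(0,1]$, I note that $H(0,t)=Nu_{rr}(0,t)>0$ forces $u_r>0$ for small $r$; if $u_r$ were to vanish first at some interior $r_1$ then $u_{rr}(r_1,t)\le0$ would give $H(r_1,t)\le0$, contradicting the assumption. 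To bound $u_t$, apply the maximum principle to $w:=u_t$, which solves a linear parabolic equation; because $u$ is radially symmetric and $k$ is a constant, differentiating the boundary condition produces the homogeneous Neumann condition $Dw\cdot\nu=0$ on $\partial B$, so
\[
V_*:=\min_r[H^\alpha(r,0)+b]\sqrt{1+u_0'(r)^2}\le u_t\le V^*:=\bigl[\max_r H^\alpha(r,0)+b\bigr]\xi_0^0,
\]
with $\xi_0^0:=\sqrt{1+\|u_0'\|_C^2}$, and both constants are strictly positive since $H>0$ at $t=0$ and $b>0$.

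The upper bound on $H$ is then immediate from $u_t=(H^\alpha+b)\xi$ and $\xi\ge1$: $H^\alpha\le V^*-b$, and $V^*-b>0$ because $\max H^\alpha(r,0)>0$ and $\xi_0^0\ge1$, so $H\le H^*:=(V^*-b)^{1/\alpha}$. For $u_{rr}$, rewriting \eqref{def-H} gives $u_{rr}=\xi^3 H-(N-1)r^{-1}u_r\xi^2$. The inequality $H\le H^*$ combined with $u_r\ge0$ yields $u_{rr}\le H^*(\xi^0)^3$ with $\xi^0:=\sqrt{1+M_1^2}$; integrating from $r=0$ using $u_r(0,t)=0$ yields $u_r/r\le H^*(\xi^0)^3$, so the term $(N-1)r^{-1}u_r\xi^2$ is uniformly bounded. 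The assumption $H>0$ then gives $u_{rr}\ge-(N-1)H^*(\xi^0)^5$, completing the two-sided estimate $|u_{rr}|\le M_2$.

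The main obstacle in this setting, compared with Theorem \ref{thm:est-b<0}, is that the analogue of the lower bound $H\ge H_*>0$ cannot be obtained by this simple scheme: the naive estimate $H^\alpha=u_t/\xi-b\ge V_*/\xi^0-b$ may be negative since $b>0$ is no longer a helpful sign. Consequently the parabolicity degenerates in principle as $H\searrow0$, and to extend the a priori bounds to $[0,T)$ (and to close the bootstrap giving a positive, uniform-in-time lower bound for $H$) one needs extra structural input—bounding $u_{rr}(0,t)$ from below via a zero-number argument, comparing steepness of $u$ with a suitable TS $\Phi_1$, and propagating these facts to $r\in(0,1]$—as outlined in the discussion following Theorem \ref{thm:est}.
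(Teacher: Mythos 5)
Your proof is correct and follows essentially the same route as the paper: the paper's own proof of this lemma consists of a single paragraph saying it is the same as Theorem \ref{thm:est-b<0} except that the lower bound for $u_{rr}$ becomes $u_{rr}>-\frac{(N-1)u_r\xi^2}{r}\ge -(N-1)H^*(\xi^0)^5$ because only $H>0$ (not $H\ge H_*>0$) is available. You have simply written out in full the steps of Theorem \ref{thm:est-b<0} that the paper refers to, and your closing remarks about the missing lower bound on $H$ correctly identify the reason for Lemmas 5.2--5.4 and Proposition 5.5.
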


\begin{proof}
The proof is similar as that for Theorem \ref{thm:est-b<0}. The main difference is that the lower bound of $H$ is $0$ by the assumption, rather than a positive number as in \eqref{bounds-H} due to $b\leq 0$ in that case. Therefore, the lower bound of $u_{rr}$ in \eqref{lower-bound-urr} is now replaced by
$$
u_{rr} > -\frac{(N-1)u_r \xi^2}{r}\geq - (N-1)H^* (\xi^0)^5,\quad r\in [0,1],\ t\in [0,T_0).
$$
This proves the lemma.
\end{proof}

As in the previous section, the uniform-in-time positive lower bound of $H$, which has not been given in the above lemma, is the key step in our approach. We will study it under the following condition:
\begin{equation}\label{u0''>0}
u''_0(r)>0,\quad r\in [0,1],
\end{equation}
which is a little stronger than the condition $H(r,0)>0$.

First, we use the zero number argument to present a uniform positive lower bound for $H(0,t)$.

\begin{lem}\label{lem:H(0,t)>0}
Assume the hypotheses in the previous lemma hold, and \eqref{u0''>0} holds. Then there exists $\delta_0>0$ depending only on $b, N$ and $u_0$ such that $u_{rr}(0,t)>\delta_0$ for all $t\in [0,T_0)$.
\end{lem}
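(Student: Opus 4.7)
The key observation I would exploit is that on the axis $r=0$, radial symmetry forces $u_r(0,t)=0$, and the mean curvature in \eqref{def-H(r,t)} collapses to $H(0,t)=Nu_{rr}(0,t)$ (both pieces contribute $u_{rr}(0,t)$, the angular term via $u_r(r,t)/r\to u_{rr}(0,t)$ as $r\to 0$). Substituting into \eqref{E} yields the pointwise algebraic identity
$$u_t(0,t)=(Nu_{rr}(0,t))^\alpha+b.$$
Thus the lemma reduces to producing a uniform positive gap $u_t(0,t)-b\geq\eta$, with $\eta$ depending only on $b,N,u_0$, after which $\delta_0:=\frac{1}{N}\eta^{1/\alpha}$ settles the claim.

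The next step is to invoke the lower bound $u_t(r,t)\geq V_*$ from Lemma \ref{lem:c>b>0-C1}. This bound is of maximum-principle type: differentiating the Cartesian form \eqref{eq0} in $t$ shows that $w:=u_t$ solves a linear equation that is uniformly parabolic as long as $H>0$ (the standing hypothesis of Lemma \ref{lem:c>b>0-C1}); the boundary condition $u_r(1,t)=k$ differentiated in $t$ yields homogeneous Neumann data $w_r(1,t)=0$, while radial symmetry gives $w_r(0,t)=0$. The minimum principle then supplies $V_*=\min_{r\in[0,1]}u_t(r,0)$, matching the value extracted in the proof of Theorem \ref{thm:est-b<0}.

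To convert this into a strict gap over $b$, I would use hypothesis \eqref{u0''>0}. Together with $u_0'(0)=0$ it forces $u_0'(r)>0$ on $(0,1]$, so both terms of
$$H(r,0)=\frac{u_0''(r)}{(1+u_0'(r)^2)^{3/2}}+\frac{(N-1)u_0'(r)}{r\sqrt{1+u_0'(r)^2}}$$
are strictly positive on $(0,1]$, and at $r=0$ the expression reads $Nu_0''(0)>0$. Since $b\geq 0$ and $\sqrt{1+u_0'^2}\geq 1$,
$$u_t(r,0)=(H(r,0)^\alpha+b)\sqrt{1+u_0'(r)^2}\geq\min_{[0,1]}H(r,0)^\alpha+b,$$
so the gap $V_*-b\geq\min_{[0,1]}H(r,0)^\alpha>0$ is controlled purely by $N$ and $u_0$. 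Feeding this back into the axis identity yields
$$u_{rr}(0,t)\geq\frac{1}{N}\min_{r\in[0,1]}H(r,0)=:\delta_0>0.$$

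The main obstacle is purely bookkeeping: one must verify that the bound $V_*$ supplied by Lemma \ref{lem:c>b>0-C1} is really of the form $\min H^\alpha(r,0)+b$, not merely some unspecified positive number, so that the gap depends on $u_0$ rather than on the solution. The zero-number viewpoint hinted at in the introduction offers an alternative route that avoids this bookkeeping: the radial slope $u_r(r,t)$, extended oddly to $r\in[-1,1]$, solves a one-dimensional linear parabolic equation with $u_r(0,t)=0$ and $u_r(\pm 1,t)=\pm k$, and by \eqref{u0''>0} its initial data has a single simple zero at $r=0$. Angenent's zero-number lemma forces this zero to remain simple for all $t\in[0,T_0)$, giving the qualitative fact $u_{rr}(0,t)\neq 0$, after which the quantitative $\delta_0$ is recovered from the axis identity together with continuity in $t$.
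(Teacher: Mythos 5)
Your argument is correct, and it is genuinely different from the paper's. The paper proves Lemma~\ref{lem:H(0,t)>0} by a geometric comparison: it fixes a translating profile $\Phi(r;c,b)$ with $c>b$ but $c-b$ small so that $\Phi'<u_0'$ on $(0,1]$, observes that the contact pattern between $u_0$ and $\Phi(\cdot;c,b)+d$ is one of four mutually exclusive types for every shift $d$, invokes the zero-number diminishing property for the two radially symmetric solutions of \eqref{eq0} to propagate this pattern in time, and concludes that for each $t$ there is a shift making the TS tangent to $u(\cdot,t)$ from above at $r=0$, whence $u_{rr}(0,t)\geq\Phi''(0;c,b)=(c-b)^{1/\alpha}/N$. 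You instead combine the on-axis identity $u_t(0,t)=(Nu_{rr}(0,t))^\alpha+b$ (valid since $u_r(0,t)=0$ so $\xi(0,t)=1$ and $H(0,t)=Nu_{rr}(0,t)$) with the explicit form of the lower bound on $u_t$ obtained from the maximum principle for $w=u_t$, namely $u_t\geq\min_r u_t(r,0)\geq\min_r H^\alpha(r,0)+b$, which is the $V_*$ appearing in the proof of Theorem~\ref{thm:est-b<0} and inherited by Lemma~\ref{lem:c>b>0-C1}. Cancelling $b$ and extracting $\alpha$-th roots gives $u_{rr}(0,t)\geq\frac1N\min_{[0,1]}H(r,0)$, a positive constant depending only on $N$ and $u_0$; the ``bookkeeping'' concern you flag is in fact resolved, since the paper does define $V_*$ explicitly as $\min H^\alpha(r,0)+b$ rather than as an abstract positive constant. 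Your route is shorter and avoids both the construction of a comparison TS and the zero-number machinery; the paper's route, while longer for this lemma, sets up exactly the TS $\Phi_1$ and the comparison framework that are reused immediately in Lemmas~\ref{lem:ur>Phi'1} and~\ref{lem:urr>0}, so it pays off downstream. One small caveat: the alternative zero-number sketch you append at the end (tracking the zero of the oddly extended $u_r$) only yields $u_{rr}(0,t)\neq 0$, and ``continuity in $t$'' alone does not upgrade this to a uniform-in-$t$ lower bound on an unbounded interval; that variant is incomplete, but it is clearly offered as an aside and your main argument does not rely on it.
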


\begin{proof}
For any $c>b$ with $c-b$ sufficient small, by Theorem \ref{thm:TSc>b>0}, the equation \eqref{E} has a TS $\Phi(r;c,b)+ct$, and
$$
\Phi'(r)<u'_0(r),\quad r\in (0,1].
$$
This is possible since $u''_0(r)>0$ and $\Phi'(r)\to 0$ uniformly in $r\in [0,1]$, as $c\to b+0$. Then for any $d\in \R$, the relationship between $\Phi(r;c,b)+d$ and $u_0(r)$ is one of the following 4 cases:

(1). no contact point;

(2). $\Phi(0;c,b)+d = u_0(0)$ and $\Phi(r;c,b)+d<u_0(r)$ for $r\in (0,1]$;

(3). there is exactly one point $r_0 \in (0,1)$ with
$$
\Phi(r;c,b)+d>u_0(r) \mbox{ for }r\in [0,r_0),\quad
\Phi(r;c,b)+d<u_0(r) \mbox{ for }r\in (r_0,1].
$$

(4). $\Phi(r;c,b)+d>u_0(r)$ for $r\in [0,1)$, $\Phi(1;c,b)+d = u_0(1)$ and
$\Phi'(1;c,b) < u'_0(1)$.

\noindent
Since both $u_1 (x,t):= u(|x|,t)$ and $u_2(x,t):= \Phi(|x|;c,b)+d+ct$ are radially symmetric solutions of the equation \eqref{eq0}, by using the zero number diminishing property as in \cite{Ang, ChPo} we conclude that, for any time $t\in (0,T_0)$, the relationship between $u(r,t)$ and $\Phi(r;c,b)+ct+d$ is also one of the above 4 cases. In particular, there exists a unique $d=d(t)$ such that $\Phi(r;c,b)+ct+d(t)$ is tangent to $u(r,t)$ at $r=0$, and so
$$
u_{rr}(0,t) \geq  \Phi''(0;c,b) = \frac{ (c-b)^{1/\alpha}}{N},\quad t\in [0,T_0).
$$
This proves the lemma.
\end{proof}

For the speed parameter $c$ in the above proof, we can take another smaller $c_1\in (b,c)$ such that, the TS $\Phi_1(r)+c_1 t$ with $\Phi_1(r):= \Phi(r;c_1,b)$ satisfying
\begin{equation}\label{choice-c1}
u_{rr}(0,t) -\Phi''_1(0) \geq \delta_1:= \frac{(c-b)^{1/\alpha} -(c_1 -b)^{1/\alpha}}{N},\quad t\in [0,T_0),
\end{equation}
\begin{equation}\label{right<k}
\Phi'_1(1)<k,
\end{equation}
and
\begin{equation}\label{choice-c12}
u'_0(r) -\Phi'_1(r) >0, \quad r\in (0,1].
\end{equation}

\begin{lem}\label{lem:ur>Phi'1}
There holds
\begin{equation}\label{ur>Phi'1}
\eta(r,t):= u_r(r,t)-\Phi'_1(r)>0,\quad r\in (0,1],\ t\in [0,T_0).
\end{equation}
\end{lem}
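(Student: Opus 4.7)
The plan is to show that $\eta$ satisfies a linear parabolic equation, then combine a first-touching-time argument with the Hopf-type lower bound $\eta_r(0,t) \geq \delta_1$ from \eqref{choice-c1} to rule out interior zeros of $\eta$. Since the right-hand side $F(u_{rr}, u_r, r) := (H^\alpha + b)\sqrt{1 + u_r^2}$ of \eqref{E} does not depend on $u$ itself, and both $u$ and $\Phi_1 + c_1 t$ satisfy \eqref{E}, the mean-value theorem gives that $w := u - \Phi_1 - c_1 t$ satisfies a linear parabolic equation $w_t = \tilde A(r,t)\, w_{rr} + \tilde B(r,t)\, w_r$ with coefficients obtained by integrating $F_{u_{rr}}$ and $F_{u_r}$ along the segment from $(\Phi_1'', \Phi_1')$ to $(u_{rr}, u_r)$. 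Differentiating once in $r$ then yields the linear parabolic equation
\[
\eta_t = \tilde A\, \eta_{rr} + (\tilde A_r + \tilde B)\, \eta_r + \tilde B_r\, \eta.
\]

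Next I would record the parabolic boundary data for $\eta$: $\eta(r,0) > 0$ on $(0,1]$ by \eqref{choice-c12}, $\eta(1,t) = k - \Phi_1'(1) > 0$ by \eqref{right<k}, and $\eta(0,t) = 0$ with $\eta_r(0,t) \geq \delta_1 > 0$ by \eqref{choice-c1}. Combining this Hopf-type bound at $r=0$ with a uniform H\"older estimate for $u_{rr}$ (available from fully nonlinear parabolic regularity once Lemma \ref{lem:c>b>0-C1} is in hand) and a Taylor expansion, I would produce a constant $\varepsilon_0 > 0$, independent of $t$, such that $\eta(r,t) \geq \tfrac{\delta_1}{2}\, r$ on $[0,\varepsilon_0] \times [0,T_0)$; thus any putative zero of $\eta$ must lie in the strip $[\varepsilon_0, 1) \times (0, T_0)$.

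I would then argue by contradiction. Setting $t^* := \inf\{t > 0 : \eta(r,t) \leq 0 \text{ for some } r \in [\varepsilon_0, 1)\}$ and supposing $t^* < T_0$, I would have $\eta \geq 0$ on $[\varepsilon_0, 1] \times [0, t^*]$ with $\eta(r^*, t^*) = 0$ for some $r^* \in [\varepsilon_0, 1)$. At this interior minimum $u_r = \Phi_1'$ and $u_{rr} = \Phi_1''$, so $H(u)(r^*,t^*) = H(\Phi_1)(r^*) > 0$ by the strict convexity of $\Phi_1$ (Theorem \ref{thm:TSc>b>0}); by continuity $H$ stays bounded below along the averaging segment in a neighborhood of $(r^*, t^*)$, making the linearized equation uniformly parabolic there. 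The strong maximum principle then forces $\eta \equiv 0$ on a backward parabolic neighborhood of $(r^*, t^*)$, and a standard propagation argument extends this to all of $[\varepsilon_0, 1] \times [0, t^*]$, contradicting $\eta(r,0) > 0$ on $[\varepsilon_0, 1]$.

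The main obstacle I expect is the parabolicity of the linearized equation for $\eta$ when $\alpha < 1$: in that case $\tilde A$ involves $H^{\alpha-1}$, which blows up as $H \to 0^+$ and so is not uniformly bounded a priori. The remedy is that I only need the strong maximum principle locally, near the prospective first tangent point $(r^*, t^*)$, where the strict positivity of $H(\Phi_1)$ along the convex TS profile (Theorem \ref{thm:TSc>b>0}) yields a positive lower bound for $H$ along the averaging segment; this is enough to run a local maximum principle and close the argument.
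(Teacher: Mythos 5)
Your plan of attack is close in spirit to the paper's---linearize to get a parabolic equation for $\eta$, feed in positive boundary data, and apply a maximum principle---but there is a genuine gap in how you set up the uniform-in-time control near $r=0$, and it is precisely the gap the paper's proof is designed to sidestep.

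You claim that from \eqref{choice-c1} and ``a uniform H\"older estimate for $u_{rr}$ (available from fully nonlinear parabolic regularity once Lemma~\ref{lem:c>b>0-C1} is in hand)'' one can produce $\varepsilon_0>0$ independent of $t$ with $\eta(r,t)\geq \tfrac{\delta_1}{2}r$ on $[0,\varepsilon_0]\times[0,T_0)$. But Lemma~\ref{lem:c>b>0-C1} supplies only $0<H\leq H^*$; it gives no positive lower bound for $H$, so the equation is not known to be \emph{uniformly} parabolic up to $T_0$, and hence fully nonlinear parabolic regularity does not yield $t$-independent bounds on $u_{rrr}$ (or H\"older bounds on $u_{rr}$). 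Without those, the radius of the set where the Taylor expansion around $r=0$ controls $\eta$ could shrink as $t\to T_0$, and your first-touch time $t^*$ could be achieved only in the limit along a sequence $r_n\to 0$ outside that shrinking set. Your ``local'' remedy at $(r^*,t^*)$, where $H=H(\Phi_1)>0$, does not help here because the problematic configuration is at the corner $r=0$, not at an interior touching point. (Also, a lower bound on $H$ at the \emph{single} point $r=0$, which you do have from Lemma~\ref{lem:H(0,t)>0}, does not propagate to a uniform neighborhood without the very regularity estimate you are trying to derive---that is the circularity.) A secondary, more minor issue: differentiating the MVT-linearized equation for $w=u-\Phi_1-c_1t$ in $r$ makes the coefficients $\tilde A_r,\tilde B_r$ depend on $u_{rrr}$ and $\Phi_1'''$, which are not controlled a priori; the paper instead differentiates the two scalar equations in $r$ first and only then subtracts, so that $u_{rrr}$ appears solely with coefficient $\alpha H^{\alpha-1}/\xi^2$ and gets absorbed into the $\eta_{rr}$ term, leaving the lower-order coefficients free of $u_{rrr}$.

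The paper avoids the circularity by an $\epsilon$-reduction: it suffices to prove the claim on $[0,T_0-\epsilon]$ for arbitrary $\epsilon>0$. On the compact time interval $[0,T_0-\epsilon]$, continuity of $H$ and the hypothesis $H>0$ immediately give $H\geq h_*(\epsilon)>0$, so the equation is uniformly parabolic there, the parabolic estimate $|u_{rrr}|\leq M_3(T_0,\epsilon)$ holds, and one obtains a (possibly $\epsilon$-dependent) $r_0>0$ with $u_{rr}-\Phi_1''>0$ on $[0,r_0]\times[0,T_0-\epsilon]$; then the maximum principle on $[r_0,1]\times[0,T_0-\epsilon]$ (with coefficients bounded thanks to $H\geq h_*(\epsilon)$) finishes. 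Since the conclusion $\eta>0$ is not quantitative, letting $\epsilon\to0$ yields the lemma on all of $[0,T_0)$. Rewriting your argument in this two-step ``compactify, then pass to the limit'' form would close the gap.
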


\begin{proof}
We only need to prove the conclusion in $t\in [0,T_0 -\epsilon]$ for any small $\epsilon>0$. In this time interval we have
$$
H(r,t)\geq h_*(\epsilon)>0,\quad r\in [0,1],\ t\in [0,T_0],
$$
and so the equation is a uniform parabolic one. By using the non-radially symmetric equation \eqref{eq0} if necessary we have the parabolic estimate
$$
|u_{rrr}(r,t)|\leq M_3 (T_0, \epsilon),\quad r\in [0,1],\ t\in [0,T_0-\epsilon].
$$
Therefore, by \eqref{choice-c1}, there exists $r_0\in (0,1)$ small such that
$$
u_{rr}(r,t) -\Phi''_1(r) >0,\quad r\in [0,r_0],\ t\in [0,T_0 -\epsilon],
$$
and so
\begin{equation}\label{eta>0-near-0}
\eta(r,t) = u_r(r,t)-\Phi'_1(r) >0,\quad r\in [0,r_0],\ t\in [0,T_0-\epsilon].
\end{equation}

Denote
$$
\xi:= \sqrt{1+u^2_r},\quad H:= \frac{u_{rr}}{\xi^3} +\frac{(N-1)u_r}{r\xi}
,\quad \xi_1 := \sqrt{1+(\Phi'_1)^2},\quad H_1 := \frac{\Phi''_1}{\xi_1^3} +\frac{(N-1)\Phi'_1}{\xi_1}.
$$
We will use $A_1, A_2, \cdots$ to denote continuous functions of $r, u_r, u_{rr}, H$ and/or $\Phi'_1,\Phi''_1, H_1$. Differentiating the equations
$$
u_t = (H^\alpha +b)\xi,\quad c_1 = (H_1^{\alpha}+b)\xi_1
$$
respectively in $r$ we have
\begin{equation}\label{eq-ur}
 u_{rt} = \frac{\alpha H^{\alpha -1}}{\xi^2} u_{rrr} + A_1 u_{rr} - \frac{(N-1)\alpha H^{\alpha-1}}{r^2} u_r
\end{equation}
and
$$
0= \frac{\alpha H_1^{\alpha -1}}{\xi_1^2} \Phi'''_1 + A_2  \Phi''_1 + A_3  \Phi'_1.
$$
Subtracting the second equation from the first one we have
$$
\eta_t = \frac{\alpha H^{\alpha-1}}{\xi^2} \eta_{rr} + A_4 \eta_r + A_5 \eta,\quad r\in (0,1),\ t\in [0,T_0 - \epsilon].
$$

Set
$$
D_1 := [r_0, 1]\times [0,T_0 -\epsilon].
$$
Since all of the functions $\frac{1}{r},\ u_r,\ u_{rr}, H^{\alpha-1}, \Phi'_1,\Phi''_1, H_1^{\alpha-1}$ are bounded in $D_1$, so are the coefficients in the equation of $\eta$. On the other hand, $\eta(r,t)>0$ on the parabolic boundary of $D_1$ by \eqref{eta>0-near-0}, \eqref{right<k} and \eqref{choice-c12}. Hence, the maximum principle implies that
$$
\eta(r,t)>0 \mbox{\ \ in\ \ }D_1.
$$
Combining with \eqref{eta>0-near-0} we obtain the conclusion.
\end{proof}

Next, we show that $u_{rr}>0$ in $[0,T_0)$.

\begin{lem}\label{lem:urr>0}
Assume the hypotheses in Lemma \ref{lem:c>b>0-C1} hold and \eqref{u0''>0} holds. Then $u_{rr}(r,t)>0$ for all $r\in [0,1], t\in [0,T_0)$.
\end{lem}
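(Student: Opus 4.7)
The plan is to argue by contradiction: suppose $u_{rr}$ first vanishes at a point $(r_0,t_0)$ with $t_0\in(0,T_0)$ and $r_0\in[0,1]$, so that $u_{rr}(r,t)>0$ on $[0,1]\times[0,t_0)$ and $u_{rr}(r_0,t_0)=0$. Lemma \ref{lem:H(0,t)>0} excludes $r_0=0$, so it remains to rule out $r_0\in(0,1)$ and $r_0=1$. Under the hypotheses, Lemma \ref{lem:c>b>0-C1} makes the equation uniformly parabolic on $[0,1]\times[0,t_0]$, so $u$ enjoys enough regularity (say $C^{4,2}$) to justify the computations below, and Lemma \ref{lem:ur>Phi'1} yields $u_r(r_0,t_0)\ge \Phi_1'(r_0)>0$, which in turn gives $H(r_0,t_0)=\frac{(N-1)u_r}{r_0\,\xi}\big|_{(r_0,t_0)}>0$ at the touching point via the constraint $u_{rr}(r_0,t_0)=0$.

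For the interior case $r_0\in(0,1)$, the first-touching point is an interior minimum of $u_{rr}$, so
\begin{equation*}
u_{rrr}(r_0,t_0)=0,\qquad u_{rrrr}(r_0,t_0)\ge 0,\qquad u_{rrt}(r_0,t_0)\le 0.
\end{equation*}
The idea is to differentiate \eqref{eq-ur} once more in $r$, and then evaluate at $(r_0,t_0)$. The vanishing $u_{rr}=u_{rrr}=0$ (and hence $\xi_r=\xi_{rr}=0$) kills most terms; the crucial bookkeeping step is to re-express the leftovers using the identity $(N-1)u_r/(r_0\xi)=H$, whereupon a short calculation produces
\begin{equation*}
u_{rrt}(r_0,t_0)=\frac{\alpha H^{\alpha-1}}{\xi^{2}}\,u_{rrrr}(r_0,t_0)+\frac{\alpha(\alpha+1)\,H^{\alpha}\,\xi}{r_0^{2}}\bigg|_{(r_0,t_0)}>0,
\end{equation*}
contradicting $u_{rrt}(r_0,t_0)\le 0$. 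The nontrivial cancellation that combines an $\alpha(\alpha-1)$ piece (coming from the second variation of $H^\alpha$) with a $2\alpha$ piece (coming from the $r^{-2}$-term in $F_r$) into the manifestly positive factor $\alpha(\alpha+1)$ is the content of the proof and the main obstacle.

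For the boundary case $r_0=1$, from $u_{rr}(\cdot,t_0)\ge 0$ on $[0,1]$ with $u_{rr}(1,t_0)=0$ we read off $u_{rrr}(1,t_0)\le 0$. On the other hand, the boundary condition $u_r(1,t)\equiv k$ yields $u_{rt}(1,t_0)=0$, and substituting $u_{rr}(1,t_0)=0$, $u_r(1,t_0)=k$ into \eqref{eq-ur} gives
\begin{equation*}
0=\frac{\alpha H^{\alpha-1}(1,t_0)}{\xi^{2}(1,t_0)}\,u_{rrr}(1,t_0)-(N-1)\alpha H^{\alpha-1}(1,t_0)\,k,
\end{equation*}
so $u_{rrr}(1,t_0)=(N-1)k(1+k^{2})>0$, again a contradiction. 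Since both possibilities for $r_0$ are excluded, no such first touching can occur and $u_{rr}>0$ throughout $[0,1]\times[0,T_0)$, as claimed.
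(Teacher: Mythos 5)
Your argument is correct, and it is essentially a "hands-on" version of the maximum-principle argument in the paper, but with the key computation done explicitly at the touching point rather than by citing the strong maximum principle. Concretely: the paper derives a parabolic equation for $\zeta:=u_{rr}$ that carries a strictly positive free term (written there as $\alpha^{2}(N-1)^{2}u_r^{2}H^{\alpha-2}/(r\xi^{4})$, apparently with a typo in the powers of $r,\xi$), restricts to a subinterval $[\tilde r_0,1]$ to avoid the singular coefficient at $r=0$, and then applies the strong maximum principle (and the Hopf lemma at $r=1$). You instead discard $r_0=0$ directly via Lemma~\ref{lem:H(0,t)>0}, and at an interior touching point you evaluate the twice-differentiated equation using $u_{rr}=u_{rrr}=\xi_r=\xi_{rr}=0$ and $H_r=-H/r$, $H_{rr}=u_{rrrr}/\xi^3+2H/r^2$, which indeed yields
\begin{equation*}
u_{rrt}=\frac{\alpha H^{\alpha-1}}{\xi^{2}}\,u_{rrrr}+\alpha(\alpha+1)\frac{H^{\alpha}\xi}{r_0^{2}}>0,
\end{equation*}
so the positive coefficient $\alpha(\alpha+1)$ is exactly the sign the paper needs but never displays. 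Your treatment of $r_0=1$ is also a clean simplification: rather than invoking Hopf to get the strict sign $u_{rrr}(1,t_0)<0$, you just read off the \emph{value} $u_{rrr}(1,t_0)=(N-1)k(1+k^{2})>0$ from \eqref{eq-ur}, which already contradicts the one-sided inequality $u_{rrr}(1,t_0)\le 0$. One small point worth making explicit (you gesture at it with "$C^{4,2}$ regularity"): the interior computation uses the fourth spatial derivative and one mixed derivative at $(r_0,t_0)$, and the paper justifies this via uniform parabolicity and parabolic Schauder estimates on $[0,T_1]$; the same justification is needed here. Also note that $H(r_0,t_0)>0$ follows from Lemma~\ref{lem:ur>Phi'1} only because $r_0>0$, which is exactly why excluding $r_0=0$ first is essential.
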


\begin{proof}
Since $u''_0(r)>0$, $u_{rr}(r,t)>0$ for small $t>0$ by continuity. Assume, by contradiction, that, for some $T_1\in (0,T_0)$ and some $r_1\in (0,1]$, there hold
\begin{equation}\label{urr>0=0}
u_{rr}(r,t)>0 \mbox{ for } r\in [0,1], t\in [0,T_1),\quad u_{rr}(r_1, T_1)=0.
\end{equation}

Differentiating the equation \eqref{eq-ur} again with respect to $r$ we have,
with $\zeta:= u_{rr}$
$$
\zeta_t = \frac{\alpha H^{\alpha-1}}{\xi^2} \zeta_{rr} + A_6 \zeta_r +A_7 \zeta
+ \frac{\alpha^2 (N-1)^2 u^2_r H^{\alpha -2}}{r\xi^4}
>\frac{\alpha H^{\alpha-1}}{\xi^2} \zeta_{rr} + A_6 \zeta_r +A_7 \zeta,
$$
where, as above, $A_6, A_7$ are continuous functions of $\frac1r, u_r, \zeta, \zeta_r$, which may be unbounded when $\frac1r$ is involved.

Since $H>0$ in $[0,T_0)$, it has a positive lower bound in $[0,T_1]$. Hence, the equations of $u,\eta,\zeta$ are all uniform parabolic ones, and so
$$
|\zeta_r (r,t)| =| u_{rrr}(r,t)| \leq M'_3,\quad r\in [0,1],\ t\in [0,T_1].
$$
Combining with \eqref{choice-c1} we see that for some $\tilde{r}_0\in (0,1)$ small,
$$
\zeta(r,t)=u_{rr}(r,t)>0,\quad r\in [0,\tilde{r}_0],\ t\in [0,T_1].
$$

If the zero $r_1$ of $u_{rr}(\cdot,T_1)$ is in $(\tilde{r}_0,1)$, then we can use the strong maximum principle for $\zeta$ in the domain $[\tilde{r}_0,1]\times [0,T_1]$ to get a contradiction. If $r_1=1$, then we can use the Hopf lemma in the same domain to conclude that
$$
\zeta_r (1,T_1) =u_{rrr}(1,T_1)<0.
$$
Substituting this inequality and $u_r(1,t)\equiv k,\ u_{rr}(1,T_1)=0$ into the equation \eqref{eq-ur} we also have a contradiction to $u_{rt}(1,t)\equiv 0$.

Hence, \eqref{urr>0=0} can not be true. This proves the lemma.
\end{proof}

Combining the above lemmas we obtain a positive lower bound for $H$.

\begin{prop}\label{prop:H>delta}
Assume the hypotheses in Lemma \ref{lem:c>b>0-C1} hold and \eqref{u0''>0} holds. Then, there exists $H_*>0$, independent of $t$ and $T_0$, such that
$$
H(r,t)\geq H_*,\quad r\in [0,1],\ t\in [0,T_0).
$$
\end{prop}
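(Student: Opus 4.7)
The plan is to assemble the three preceding lemmas into the desired uniform lower bound, with essentially no new analytic work required. Since $u_{rr}(r,t)>0$ throughout $[0,1]\times[0,T_0)$ by Lemma \ref{lem:urr>0}, the first term in
\begin{equation*}
H(r,t) = \frac{u_{rr}(r,t)}{(1+u_r^2)^{3/2}} + \frac{(N-1)u_r(r,t)}{r\sqrt{1+u_r^2}}
\end{equation*}
is positive and can be discarded. Combining this with Lemma \ref{lem:ur>Phi'1} (which gives $u_r(r,t)>\Phi_1'(r)$ on $(0,1]$) and the fact that $s\mapsto s/\sqrt{1+s^2}$ is strictly increasing on $[0,\infty)$, I would obtain
\begin{equation*}
H(r,t) \;\geq\; \frac{(N-1)\Phi_1'(r)}{r\sqrt{1+(\Phi_1'(r))^2}} \;=:\; f(r),\qquad r\in(0,1],\ t\in[0,T_0).
\end{equation*}

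Next I would verify that $f$ has a strictly positive minimum on $[0,1]$. Since $\Phi_1\in C^\infty([0,R_\infty))$ with $\Phi_1(0)=\Phi_1'(0)=0$, the quotient $\Phi_1'(r)/r$ extends continuously to $r=0$ with value $\Phi_1''(0)$; evaluating \eqref{TS} at $r=0$ yields $\Phi_1''(0)=(c_1-b)^{1/\alpha}/N>0$. Hence $f$ is continuous on $[0,1]$ with $f(0)=(N-1)(c_1-b)^{1/\alpha}/N>0$ and $f>0$ throughout, so $\kappa_1 := \min_{[0,1]} f > 0$, giving a time-independent bound $H(r,t)\geq \kappa_1$ for $r\in(0,1]$.

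It remains to handle $r=0$ separately, where the inequality above only records the limiting value of $f$ and not the full $H(0,t)=N u_{rr}(0,t)$. Here I would simply invoke Lemma \ref{lem:H(0,t)>0} directly, giving $H(0,t)\geq N\delta_0$. Setting $H_* := \min\{\kappa_1,\, N\delta_0\}>0$ (which depends only on $N,\alpha,b,k$ and $u_0$ through the choice of $c_1$ in \eqref{choice-c1}--\eqref{choice-c12}, and in particular is independent of $t$ and $T_0$) then yields the uniform bound. I do not anticipate any genuine obstacle at this stage: the substantive work—the zero-number argument producing the positive lower barrier $u_{rr}(0,t)\geq\delta_0$, the comparison with the traveling solution $\Phi_1$ giving $u_r>\Phi_1'$, and the strong maximum principle / Hopf argument producing $u_{rr}>0$—has already been carried out in Lemmas \ref{lem:H(0,t)>0}--\ref{lem:urr>0}, and the present proposition is essentially the payoff of those three steps.
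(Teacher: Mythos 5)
Your proof is correct and follows essentially the same route as the paper: discard the nonnegative $u_{rr}$ term (Lemma~\ref{lem:urr>0}), bound the remaining curvature term from below via $u_r>\Phi_1'$ (Lemma~\ref{lem:ur>Phi'1}), and observe that the resulting function of $r$ has a positive minimum because $\Phi_1'(r)/r\to\Phi_1''(0)=(c_1-b)^{1/\alpha}/N>0$. The only cosmetic differences are that the paper bounds $\frac{u_r}{\xi}\ge\frac{\Phi_1'(r)}{\sqrt{1+k^2}}$ by estimating numerator and denominator separately (using $u_r\le k$) rather than appealing to the monotonicity of $s\mapsto s/\sqrt{1+s^2}$, and that you invoke Lemma~\ref{lem:H(0,t)>0} at $r=0$ where the paper relies implicitly on continuity of $H$ and of $\Phi_1'(r)/r$; both choices are valid and yield the same conclusion.
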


\begin{proof}
For all $r\in [0,1]$ and $t\in [0,T_0)$, by Lemmas \ref{lem:urr>0} and \ref{lem:ur>Phi'1} we have
$$
H = \frac{u_{rr}}{\xi^3} +\frac{(N-1)u_r}{r\xi} \geq \frac{(N-1)u_r}{r\xi}
\geq \frac{N-1}{\sqrt{1+k^2}} \cdot \frac{\Phi'_1(r)}{r} \geq H_* := \frac{N-1}{\sqrt{1+k^2}} \cdot \min\limits_{r\in [0,1]} \frac{\Phi'_1(r)}{r}.
$$
This proves the conclusion.
\end{proof}

With this uniform positive lower bound of $H$ in hand, we can prove the following result in a similar way as in the previous section.

\begin{thm}\label{thm:stabilty-TS-c>b>0}
Assume $b>0,\ k>0$ and $u''_0(r)>0$. Then the problem \eqref{E}-\eqref{BC1} with initial data $u(r,0)=u_0(r)$ has a unique global classical solution $u(r,t)$, all the estimate results in Lemma \ref{lem:c>b>0-C1} and the positive lower bound of $H$ in Proposition \ref{prop:H>delta} hold for all $t\geq 0$.

Moreover, the TS $\wt{\Phi}(r;k)+ \tilde{c}(b,k)t$ is asymptotically stable in the sense that \eqref{asy-stable} holds.
\end{thm}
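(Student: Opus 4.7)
The strategy is to follow the same pattern as the proof of Theorem~\ref{thm:stability-TS-b<0}, using Proposition~\ref{prop:H>delta} to supply the uniform positive lower bound of $H$ that was essentially free in the case $b\leq 0$ via \eqref{bounds-H}. First I would establish global existence through a continuation argument. The hypothesis $u_0''>0$ on $[0,1]$ combined with $u_0'(0)=0$ and $u_0'(1)=k>0$ forces $u_0'\geq 0$ on $[0,1]$, hence $H(r,0)>0$ for every $r\in [0,1]$, so \eqref{E} is strictly parabolic initially and a standard Schauder/fixed-point argument yields a short-time $C^{2+\beta,1+\beta/2}$ classical solution. Letting $T$ be the maximal existence time, define
\[
T^\ast := \sup\bigl\{T_0\in (0,T] \ : \ H(r,t)>0 \mbox{ for all } (r,t)\in [0,1]\times [0,T_0)\bigr\}.
\]
On every $[0,T_0)\subset [0,T^\ast)$, Lemma~\ref{lem:c>b>0-C1} furnishes the $C^0$--$C^2$ bounds, Lemma~\ref{lem:urr>0} propagates $u_{rr}>0$, and Proposition~\ref{prop:H>delta} supplies the crucial $t$-independent bound $H(r,t)\geq H_\ast>0$. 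Hence \eqref{E} is \emph{uniformly} parabolic on $[0,1]\times [0,T^\ast)$, and standard regularity for fully nonlinear uniformly parabolic equations gives $C^{2+\beta,1+\beta/2}$-bounds uniform on $[0,1]\times [\epsilon,T^\ast)$. Thus $u$ extends past $T^\ast$ while preserving $H\geq H_\ast/2>0$, contradicting the definition of $T^\ast$ unless $T^\ast=\infty$. Consequently $T=\infty$ and all the a priori estimates stated hold globally.

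Next I would analyse the $\omega$-limit set of the shifted function $\wt u(r,t):=u(r,t)-\tilde c(b,k)t$. Lemma~\ref{lem:c>b>0-C1} yields $\|\wt u(\cdot,t)\|_{L^\infty([0,1])}\leq M_0$, and the uniform parabolicity together with the uniform $C^2$ bounds produces
\[
\|\wt u\|_{C^{2+\beta,1+\beta/2}([0,1]\times [t_1,t_2])}\leq C
\]
for some $C$ independent of $t_2>t_1\geq 1$. Hence the trajectory $\{\wt u(\cdot,\cdot+s)\}_{s\geq 1}$ is pre-compact in $C^{2+\beta',1+\beta'/2}_{\mathrm{loc}}([0,1]\times \R)$ for every $\beta'\in (0,\beta)$, so its $\omega$-limit set $\Omega(\wt u)$ is nonempty, connected, compact and invariant under the associated flow.

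To conclude, I would show that $\Omega(\wt u)$ consists of a single shift of $\wt\Phi$, mimicking the endgame of Theorem~\ref{thm:stability-TS-b<0}. Any $U\in \Omega(\wt u)$ is a bounded classical entire solution of the equation obeyed by $\wt u$ on $[0,1]\times \R$ with boundary conditions \eqref{BC1}. If there existed $M_1<M_2$ with $\wt\Phi(r)+M_1\preceq U(r,0)\preceq \wt\Phi(r)+M_2$, the strong comparison principle at $t=1$ would give $U(r,1)<\wt\Phi(r)+M_2-2\varepsilon$ for some $\varepsilon>0$; passing through an approximating sequence $\wt u(\cdot,t_n+\cdot)\to U$ and then invoking comparison for $\wt u$ on $t\geq t_{n_0}+1$ forces $\wt u(r,t)\leq \wt\Phi(r)+M_2-\varepsilon$, contradicting the fact that $\wt\Phi(r)+M_2$ is reached by some element of $\Omega(\wt u)$ at the contact point. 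Therefore each $U\in \Omega(\wt u)$ must equal $\wt\Phi(r)+M$ for some $M\in\R$. Uniqueness of $M$ is then standard: two shifts $\wt\Phi(\cdot)+M$ and $\wt\Phi(\cdot)+M+2\varepsilon$ in $\Omega(\wt u)$ cannot coexist, for approaching the former in $C^2$ at a large time $T$ and applying comparison would pin $\wt u(\cdot,t)\leq \wt\Phi(\cdot)+M+\varepsilon$ for all $t\geq T$, ruling out the latter.

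The main obstacle in this entire programme is the uniform positive lower bound on $H$: unlike the case $b\leq 0$, where \eqref{bounds-H} delivers $H\geq H_\ast$ immediately from the maximum principle applied to $u_t$, here one must combine the zero-number lower bound on $u_{rr}(0,t)$ (Lemma~\ref{lem:H(0,t)>0}), the comparison with the auxiliary translator $\Phi_1$ (Lemma~\ref{lem:ur>Phi'1}), and the persistence of $u_{rr}>0$ (Lemma~\ref{lem:urr>0}) to obtain Proposition~\ref{prop:H>delta}. Once that bound is secured, the continuation argument and the $\omega$-limit analysis above run in exactly the same way as in Theorem~\ref{thm:stability-TS-b<0}.
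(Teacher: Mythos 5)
Your proposal is correct and follows essentially the same line as the paper's. The paper in fact simply states that the proof is parallel to that of Theorem~\ref{thm:stability-TS-b<0}, with the uniform positive lower bound of $H$ now supplied by Proposition~\ref{prop:H>delta} (via Lemmas~\ref{lem:H(0,t)>0}, \ref{lem:ur>Phi'1}, \ref{lem:urr>0}) rather than by the direct maximum-principle estimate \eqref{bounds-H}; your write-up fills in exactly that continuation argument and the $\omega$-limit analysis, which is precisely what the paper intends.
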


\subsection{The case $b>0>k$}

In this case, the TS has a concave profile with negative mean curvature, so  it is natural to consider the solution $u$ of \eqref{E}-\eqref{BC1} with $H<0$.
To derive the convergence of the profile of $u$, as above we need a uniform-in-time negative lower and upper bounds for $H$. In particular, the negative upper bound for $H$ is essential in the approach. For this purpose we require that the initial $u_0(r)$ not only satisfies $H(r,0)<0$ but also
satisfies
\begin{equation}\label{cond-u0-negative}
u''_0(r)<0,\quad r\in [0,1].
\end{equation}

In a similar way as in the previous subsection we can derive the following result.

\begin{thm}\label{thm:b>c>0}
Assume $\alpha=\frac{q}{p}$ for some positive odd number $p$ and $q$, $b>0>k$, and $b^{{1/\alpha}} \sqrt{1+k^2}> -k N$. Assume also that \eqref{cond-u0-negative} and $H^\alpha(r,0)+b>0$. Then
\begin{enumerate}[{\rm (i)}]
\item the problem \eqref{E}-\eqref{BC1} with initial data $u(r,0)=u_0(r)$ has a unique global classical solution $u(r,t)$;
\item there exist positive constants $M_0, M_1, M_2, V_*, V^*, H_*, H^*$, such that
\begin{eqnarray*}
|u(r,t) - \tilde{c}(b,k)t|\leq M_0,\quad r\in [0,1],\ t\geq 0;\\
0> u_r(r,t) \geq - M_1,\quad r\in (0,1],\ t\geq 0;\\
V_* \leq u_t(r,t) \leq V^*,\quad r\in [0,1],\ t\geq 0;\\
- H^* \leq H(r,t) \leq -H_*, \quad r\in [0,1],\ t\geq 0;\\
|u_{rr}(r,t)|\leq M_2,\quad r\in [0,1],\ t\geq 0;
\end{eqnarray*}
\item the TS $\wt{\Phi}(r;k)+\tilde{c}(b,k)t$ is asymptotically stable in the sense that \eqref{asy-stable} holds.
\end{enumerate}
\end{thm}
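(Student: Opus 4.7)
The plan is to mirror the argument of Subsection 5.1 with all inequalities reversed, exploiting the symmetry provided by the hypothesis $\alpha=q/p$ with $p,q$ odd positive integers (so $H^\alpha$ is well-defined and $H^{\alpha-1}>0$ whenever $H<0$). The role played in Section 5.1 by a convex TS $\Phi_1$ with speed slightly above $b$ will now be played by a concave TS supplied by Theorem \ref{thm:TSb>c>0}, namely $\Phi(r;c_1,b)$ for some $c_1\in(0,b)$ slightly below $b$. Under the hypothesis $b^{1/\alpha}\sqrt{1+k^2}>-kN$ together with $u_0''<0$ and $H^\alpha(r,0)+b>0$, the initial data is concave with $H(r,0)<0$, so everything will be phrased in terms of $-u_{rr}$, $-u_r$ and $-H$.

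First I would establish the analogue of Lemma \ref{lem:c>b>0-C1} under the standing assumption $H(r,t)<0$ on a maximal interval $[0,T_0)\subseteq[0,T)$. The $C^0$ bound follows by comparison with the shifted TS $\wt{\Phi}(r)+\tilde{c}(b,k)t\pm M_0'$ given by Theorem \ref{thm:TS-in-cylinder}(d); the $C^1$ bound on $u_r$ is obtained, as in Theorem \ref{thm:est-b<0}, by applying the maximum principle to $w_1=u_{x_1}$ for the non-radial form \eqref{eq0} and restricting to $x_2=\cdots=x_{N+1}=0$, using that $H^{\alpha-1}>0$ under our sign convention; the bound on $u_t$ comes from the Neumann maximum principle applied to $w=u_t$. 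Concavity of a radial profile together with $u_r(0,t)=0$ forces $u_r(r,t)<0$ for $r\in(0,1]$. The bounds on $u_{rr}$ then follow from $u_{rr}=\xi^3 H-(N-1)u_r\xi^2/r$ combined with the upper bound for $H$ (which in this case means $H\le 0$) and the boundedness of $u_r/r$.

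The crucial step, replacing Proposition \ref{prop:H>delta}, is the uniform negative upper bound $H\le -H_*<0$. I would carry this out in three sub-steps.
(a) Pick $c_1\in(0,b)$ with $c_1$ close enough to $b$ so that the concave TS $\Phi_1(r):=\Phi(r;c_1,b)$ from Theorem \ref{thm:TSb>c>0} satisfies $\Phi_1'(r)>u_0'(r)$ on $(0,1]$ (possible because $\Phi_1'\to 0$ uniformly on $[0,1]$ as $c_1\to b-0$) together with $\Phi_1'(1)>k$ and $u_0''(0)<\Phi_1''(0)=-(b-c_1)^{1/\alpha}/N$; the existence of such $c_1$ uses the strict concavity of $u_0$ and the monotonicity properties given by Theorem \ref{thm:TSb>c>0}.
(b) Use the zero-number argument of Angenent/Matano applied to the radial functions $u(r,t)$ and $\Phi_1(r)+d+c_1 t$ (viewed as solutions of the non-radial equation \eqref{eq0}) to deduce, exactly as in Lemma \ref{lem:H(0,t)>0}, a uniform lower bound $-u_{rr}(0,t)\ge\delta_0>0$; the four-case geometric classification of intersections is preserved under the flow because the equation is parabolic on $[0,T_0)$.
(c) Linearise \eqref{E} about $\Phi_1$ and show, via the maximum principle applied to $\eta:=\Phi_1'-u_r$ and then to $\zeta:=-u_{rr}$ in the same style as Lemmas \ref{lem:ur>Phi'1} and \ref{lem:urr>0}, that $u_r(r,t)<\Phi_1'(r)<0$ and $u_{rr}(r,t)<0$ hold throughout $[0,1]\times[0,T_0)$; the Hopf lemma at $r=1$ handles the boundary case, using $u_r(1,t)\equiv k$ and $\Phi_1'(1)>k$. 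Then
\[
-H(r,t)=-\frac{u_{rr}}{\xi^3}-\frac{(N-1)u_r}{r\xi}\ge\frac{N-1}{\sqrt{1+k^2}}\cdot\frac{-\Phi_1'(r)}{r}\ge H_*:=\frac{N-1}{\sqrt{1+k^2}}\min_{r\in[0,1]}\frac{-\Phi_1'(r)}{r}>0.
\]
The main obstacle I anticipate is exactly this sub-step (c): verifying that the differentiated equations for $\eta$ and $\zeta$ still have continuous coefficients at $r=0$ despite the $1/r$ singularity, which should be handled by converting to the non-radial equation \eqref{eq0} near the axis, and by checking that the extra term $\alpha^2(N-1)^2 u_r^2 H^{\alpha-2}/(r\xi^4)$ produced by differentiating has the favourable sign (positive because $H^{\alpha-2}>0$ for $H<0$ with $\alpha=q/p$, $p,q$ odd), so that the maximum principle gives $\zeta=-u_{rr}>0$ via an inequality of the form $\zeta_t>\tfrac{\alpha|H|^{\alpha-1}}{\xi^2}\zeta_{rr}+A_6\zeta_r+A_7\zeta$.

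Once these uniform bounds are in place, parabolicity is uniform-in-time, so standard fully nonlinear parabolic theory (Krylov--Safonov, Schauder) yields $C^{2+\beta,1+\beta/2}$ estimates and hence the global solution in part (i) and the bounds in (ii). For part (iii), I would set $\tilde{u}(r,t):=u(r,t)-\tilde{c}(b,k)t$ and argue verbatim as in the proof of Theorem \ref{thm:stability-TS-b<0}: the $\omega$-limit set in the $C^2$-topology is nonempty, connected and compact; any element $U(r,0)$ must coincide with a vertical translate $\wt{\Phi}(r)+M$, for otherwise the strong comparison principle applied between $U$ and $\wt{\Phi}(\cdot)+M_2$ produces a strict gap propagated to all later times, contradicting the definition of $\omega$-limit; and finally the $\omega$-limit must be a singleton, since two distinct translates would again be separated by comparison. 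This yields \eqref{asy-stable} and completes the proof.
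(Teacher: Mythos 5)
Your proposal is correct and follows essentially the same route the paper intends: the paper proves Theorem \ref{thm:b>c>0} precisely by repeating the argument of Subsection 5.1 with all signs reversed (comparison with the shifted TS of Theorem \ref{thm:TS-in-cylinder}(d) for the $C^0$ bound, maximum principle for $u_{x_1}$ and $u_t$, the zero-number lemma against a slightly flatter concave TS $\Phi(\cdot;c_1,b)$ with $c_1<b$ from Theorem \ref{thm:TSb>c>0}, then the maximum principle for $\Phi_1'-u_r$ and $-u_{rr}$ to get the uniform negative upper bound of $H$, and the $\omega$-limit argument of Theorem \ref{thm:stability-TS-b<0} for convergence). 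One small sign slip: for $H<0$ and $\alpha=q/p$ with $p,q$ odd one has $H^{\alpha-2}=H^{(q-2p)/p}<0$ (odd over odd exponent), not $>0$ as you state; the step survives because this makes the extra term in the $u_{rr}$-equation negative, hence positive after multiplying by $-1$, which is exactly the favourable sign needed for the maximum-principle inequality for $\zeta=-u_{rr}$.
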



\end{document}